\theoremstyle{plain}
\newtheorem*{rep@theorem}{\rep@title}
\newcommand{\newreptheorem}[2]{%
\newenvironment{rep#1}[1]{%
 \def\rep@title{#2 \ref{##1}}%
 \begin{rep@theorem}}%
 {\end{rep@theorem}}}
\newtheorem{theorem}{Theorem}[section]
\newtheorem{lemma}[theorem]{Lemma}
\newtheorem{corollary}[theorem]{Corollary}
\newtheorem{question}[theorem]{Question}
\newtheorem{problem}[theorem]{Problem}
\newtheorem{observation}[theorem]{Observation}
\theoremstyle{definition}
\newtheorem{definition}[theorem]{Definition}
\newtheorem{example}[theorem]{Example}
\theoremstyle{remark}
\newtheorem*{remark}{Remark}
\newcommand{\UnitDisk}{\mathbb{D}}
\newcommand{\ComplexPlane}{\mathbb{C}}
\newcommand{\Reals}{\mathbb{R}}
\newcommand{\RiemannSphere}{\widehat{\mathbb{C}}}
\newcommand{\eps}{\varepsilon}
\DeclareMathOperator{\id}{id}
\DeclareMathOperator{\spann}{span}
\renewcommand{\Re}{\mathrm{Re\,}}
\begin{document}

\title{Rational Ahlfors functions}

\date{March 11, 2014}

\author[M. Fortier Bourque]{Maxime Fortier Bourque}
\thanks{First author supported by NSERC}
\address{Department of Mathematics, Graduate Center, City University of New York, 365 Fifth Avenue, New York (NY), USA, 10016.}
\email{maxforbou@gmail.com}

\author[M. Younsi]{Malik Younsi}
\thanks{Second author supported by the Vanier Canada Graduate Scholarships program.}
\address{D\'epartement de math\'ematiques et de statistique, Pavillon Alexandre--Vachon, $1045$ av. de la M\'edecine, Universit\'e Laval, Qu\'ebec (Qu\'ebec), Canada, G1V 0A6.}
\email{malik.younsi.1@ulaval.ca}

\keywords{Analytic capacity, Ahlfors functions, rational maps, conformal representation}
\subjclass[2010]{primary 30C85, 30C20; secondary 30F10, 65E05}

\begin{abstract}
We study a problem of Jeong and Taniguchi asking to find all rational maps which are Ahlfors functions. We prove that the rational Ahlfors functions of degree two are characterized by having positive residues at their poles. We then show that this characterization does not generalize to higher degrees, with the help of a numerical method for the computation of analytic capacity. We also provide examples of rational Ahlfors functions in all degrees.
\end{abstract}

\maketitle

\section{Introduction}

Let $X$ be a domain in the Riemann sphere containing $\infty$. The derivative at infinity of a holomorphic function $f: X \to \ComplexPlane$ is defined as
$$
f'(\infty):=\lim_{z \to \infty} z(f(z)-f(\infty)).
$$
Among all the holomorphic functions $f : X \to \UnitDisk$ satisfying $f(\infty)=0$, there is a unique one which maximizes $\Re f'(\infty)$, called the \textit{Ahlfors function} (see \cite{HAV} or \cite{FISH}).

The Ahlfors function is constant precisely when the complement $\RiemannSphere \setminus X$ is remo\-vable for bounded holomorphic functions. This is what motivated Ahlfors to study the above extremal problem in his seminal paper \cite{AHL}. The main result of \cite{AHL} can be viewed as a generalization of both the Schwarz lemma and the Riemann mapping theorem.

\begin{theorem}[Ahlfors \cite{AHL}]
\label{theo Ahlfors}
Let $X$ be a non-degenerate $n$-connected domain containing $\infty$. Then the Ahlfors function $f:X \to \UnitDisk$ is a proper holomorphic map of degree $n$.
\end{theorem}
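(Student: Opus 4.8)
The plan is to reduce to a domain bounded by analytic curves, obtain the Ahlfors function by a compactness argument, determine its boundary behaviour by a variational argument, and then count its zeros with the argument principle.

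First I would reduce to the case of an analytic boundary. Every non-degenerate finitely connected domain is conformally equivalent to a domain $X'$ bounded by finitely many disjoint analytic Jordan curves, and composing with a M\"obius transformation we may take the equivalence $\psi\colon X'\to X$ to fix $\infty$. Since $\psi$ is a biholomorphism, hence a proper homeomorphism, and since $(g\circ\psi^{-1})'(\infty)$ is a fixed nonzero multiple of $g'(\infty)$ for holomorphic $g\colon X'\to\ComplexPlane$ with $g(\infty)=0$, while the competing function classes are rotation invariant, the Ahlfors function of $X$ equals a unimodular constant times the Ahlfors function of $X'$ composed with $\psi^{-1}$. Properness and degree are unaffected by precomposition with the biholomorphism $\psi^{-1}$ and postcomposition with a rotation of $\UnitDisk$, so we may assume $X$ itself is bounded by $n$ disjoint analytic Jordan curves $\gamma_1,\dots,\gamma_n$, each carrying the boundary orientation of $X$. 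The family of holomorphic maps $f\colon X\to\overline{\UnitDisk}$ with $f(\infty)=0$ is normal, and on it the functional $f\mapsto\Re f'(\infty)$ --- given by $\frac{1}{2\pi i}$ times the integral of $f$ over a fixed circle enclosing all the $\gamma_j$ --- is continuous, so a maximizer $f$ exists; after a rotation, $f'(\infty)\ge 0$. Non-degeneracy (no complementary component of $X$ is a single point, so $\RiemannSphere\setminus X$ is not removable) forces $f'(\infty)>0$; hence $f$ is non-constant and, by the maximum principle, $|f|<1$ throughout $X$.

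The crux is the boundary behaviour. Writing $f'(\infty)=\frac{1}{2\pi i}\int_{\partial X}f(z)\,dz$ for a suitable orientation, one shows by a variational argument --- equivalently, by analysing the dual extremal problem --- that the extremal $f$ has unimodular boundary values, is a proper map onto $\UnitDisk$ (in particular has no singular inner factor), and has $f(z)\,dz$ of constant argument along $\partial X$: were $|f|<1$ on a boundary set of positive measure, or were $\arg(f(z)\,dz)$ non-constant, one could add a small holomorphic perturbation vanishing at $\infty$ that keeps $\|f\|_\infty\le 1$ and strictly increases $\Re f'(\infty)$. Since $\partial X$ is analytic and $f$ is a proper map to $\UnitDisk$, it extends analytically across $\partial X$; in particular $f$ is continuous on $\overline X$ with $|f|\equiv 1$ on $\partial X$, so $f\colon X\to\UnitDisk$ is proper.

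It remains to compute the degree. Being analytic on $\overline X$ and non-constant, $f$ has finitely many zeros in $X$ and none on $\partial X$. Since $f$ is proper, $\deg f$ equals the number of zeros of $f$ in $X$ counted with multiplicity, which by the argument principle is $\frac{1}{2\pi}\sum_{j=1}^{n}\Delta_{\gamma_j}\arg f$. Along $\gamma_j$, the constancy of $\arg(f(z)\,dz)$ gives $\Delta_{\gamma_j}\arg f=-\Delta_{\gamma_j}\arg(dz)$, the negative of the total turning of the unit tangent along $\gamma_j$; and since $\gamma_j$, as a boundary component of a domain containing $\infty$, runs clockwise around the bounded region it encloses, that turning equals $-2\pi$, so $\Delta_{\gamma_j}\arg f=2\pi$. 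Therefore $\deg f=\frac{1}{2\pi}\cdot n\cdot 2\pi=n$. The genuinely delicate point is the variational analysis of the third paragraph --- showing that the a priori merely bounded extremal function is inner, has no singular inner factor, and has $f(z)\,dz$ of constant argument on $\partial X$ (with the regularity needed to reflect across $\partial X$); the reduction, the compactness, the argument principle, and the turning-tangent count are all routine.
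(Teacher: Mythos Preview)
The paper does not prove this theorem; it is quoted as Ahlfors' result from \cite{AHL} and used as a black box throughout, so there is no proof in the paper against which to compare your proposal.

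That said, your sketch follows the classical line due to Ahlfors and Garabedian: reduce to a domain with analytic boundary, get existence by normal families, use a variational (equivalently, dual-extremal) argument to show the extremal map is unimodular on $\partial X$ with $\arg\bigl(f(z)\,dz\bigr)$ constant on each boundary curve, reflect to obtain analytic extension, and count zeros via the argument principle together with the turning-tangent computation. You have correctly isolated the one genuinely nontrivial step, namely establishing the boundary behaviour. In a complete proof this is usually done either through Garabedian's dual problem \cite{GARA}, which produces a nonvanishing $\psi\in A(X)$ with $f(z)\psi(z)^2\,dz>0$ on $\partial X$, or through Ahlfors' original variational reasoning; your paragraph gestures at both without executing either, which is acceptable for a plan but would have to be filled in. One small caution: the phrase ``no singular inner factor'' is borrowed from the Hardy-space setting on the disk and is not quite the right formulation here; what you actually need, and what the dual problem delivers, is that $|f|\equiv 1$ continuously on the analytic boundary, after which Schwarz reflection gives the analytic extension directly. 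The final degree count is correct as written.
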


Using Ahlfors' theorem, Jeong and Taniguchi proved that up to a biholomorphism, the Ahlfors function on a finitely connected domain is simply the restriction of a rational map.

\begin{theorem}[Jeong-Taniguchi \cite{JEONG}]
\label{JT}
Let $X$ be a non-degenerate $n$-connected domain containing $\infty$, and let $f$ be the Ahlfors function on $X$. Then there exists a rational map $R$ of degree $n$ and a biholomorphism $g : X \to R^{-1}(\UnitDisk)$ such that $f = R \circ g$.
\end{theorem}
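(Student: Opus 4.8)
The plan is to leverage Ahlfors' Theorem \ref{theo Ahlfors} --- which tells us that $f \colon X \to \UnitDisk$ is a proper holomorphic map of degree $n$ --- and then to construct the rational map $R$ by ``capping off the holes'' of $X$ using $f$ itself. Identifying $\widehat{X}$ with $\RiemannSphere$ will then turn $f$ into the restriction of a rational map.

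The first step is a boundary analysis. Since $f$ is proper, $X$ is an $n$-sheeted branched cover of $\UnitDisk$, and Riemann--Hurwitz ($\chi(X) = n\,\chi(\UnitDisk) - R$, i.e. $2-n = n - R$) gives it only finitely many critical points. Pulling back a collar $\{r < |w| < 1\}$ of $\partial\UnitDisk$ avoiding all critical values, one sees that $X$ is conformally a compact bordered surface with analytic boundary together with $n$ annular collars, and that on each collar $f$ restricts to a conformal map onto $\{r < |w| < 1\}$ extending conformally to the boundary. In particular, after a conformal change of coordinates we may assume that $\partial X$ consists of $n$ analytic Jordan curves $\gamma_1, \dots, \gamma_n$, that $f$ extends holomorphically across each $\gamma_j$ by Schwarz reflection with no critical points there, and that $f$ maps each $\gamma_j$ homeomorphically onto $\UnitCircle$.

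Now I would glue. Write $\RiemannSphere \setminus X = K_1 \sqcup \dots \sqcup K_n$ with $\partial K_j = \gamma_j$. For each $j$, discard $K_j$ and glue in its place a copy $E_j$ of the closed exterior disk $\RiemannSphere \setminus \UnitDisk$, identifying $z \in \gamma_j$ with $f(z) \in \partial E_j$; the Schwarz-reflected extension of $f$, which carries a neighborhood of $\gamma_j$ conformally onto a neighborhood of $\partial\UnitDisk$ sending the $X$-side inside and the $K_j$-side outside, supplies the complex structure making this gluing holomorphic. Call the resulting compact Riemann surface $\widehat{X}$. Topologically we have capped each of the $n$ boundary circles of the planar, $n$-connected surface $X$, so $\chi(\widehat{X}) = \chi(X) + n = (2-n) + n = 2$, and hence $\widehat{X}$ is biholomorphic to $\RiemannSphere$. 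Define $F \colon \widehat{X} \to \RiemannSphere$ by $F = f$ on $X$ and $F = \id$ on each $E_j$ (regarded as a subdomain of $\RiemannSphere$); these definitions agree along the seams and patch to a holomorphic map. Composing with a biholomorphism $h \colon \widehat{X} \to \RiemannSphere$ gives a rational map $R := F \circ h^{-1}$. A generic point of $\UnitDisk$ has exactly its $n$ $f$-preimages in $X$ as preimages under $F$ (the $E_j$ map into $\RiemannSphere \setminus \UnitDisk$), so $\deg R = n$; moreover $F^{-1}(\UnitDisk) = X$, hence $R^{-1}(\UnitDisk) = h(X)$. Taking $g := h|_X$ then yields a biholomorphism $g \colon X \to R^{-1}(\UnitDisk)$ with $R \circ g = f$.

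The main obstacle is the boundary analysis of the first step: one must know that properness of $f$ forces $\partial X$ to be conformally analytic and that $f$ behaves tamely there (extends by reflection, has no boundary critical points, and is one-to-one on each boundary component). Once this is in hand, the remainder is a routine Euler characteristic count together with an essentially tautological patching of $f$ with the identity map. A minor additional point is to check that the conformal normalization to analytic boundary is compatible with the normalization $f(\infty) = 0$ defining the Ahlfors function; this follows from the natural transformation behaviour of Ahlfors functions under biholomorphisms.
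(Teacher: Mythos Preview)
Your proposal is correct and follows essentially the same route as the paper's proof of Theorem~\ref{Bellrep}: reduce to analytic boundary, show that $f$ extends across $\partial X$ with no critical points there and restricts to a homeomorphism on each boundary component, weld on $n$ copies of $\RiemannSphere\setminus\UnitDisk$ via $f$, uniformize the resulting genus-zero surface, and read off $R$ as $f$ composed with the inverse uniformizer. The only cosmetic difference is that the paper obtains analytic boundary by invoking the Riemann mapping theorem on each complementary component, whereas you extract it from the unbranched covering structure of $f$ over a collar of $\partial\UnitDisk$; both arguments lead to the same welding construction.
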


By applying a M\"obius transformation, we may assume that the biholomorphism $g$ satisfies $\lim_{z \to \infty} (g(z)-z) = 0$. Then, by a simple change of variable argument, the rational map $R$ is the Ahlfors function on $R^{-1}(\UnitDisk)$, and we say that it is a \textit{rational Ahlfors function}.

With this normalization, $R$ admits the partial fractions decomposition
$$
R(z) = \sum_{j=1}^n \frac{a_j}{z-p_j}
$$
for some $a_1,...,a_n \in \ComplexPlane \setminus \{0\}$ and distinct $p_1,...,p_n \in \ComplexPlane$.

The following problem was posed by Jeong and Taniguchi in \cite[Problem 1.5]{JEONG}.

\begin{problem} Determine which residues $a_1,...,a_n$ and poles $p_1,...,p_n$ correspond to rational Ahlfors functions.
\end{problem}

The problem is trivial for $n=1$ : its solution is given by $a_1 > 0$ and $p_1 \in \ComplexPlane$. The solution for $n=2$ can be expressed in equally simple terms.

\begin{theorem}
\label{deg2}
A rational map of degree two is a rational Ahlfors function if and only if it can be written as
$$R(z)= \frac{a_1}{z-p_1}+\frac{a_2}{z-p_2},$$
for distinct $p_1, p_2 \in \mathbb{C}$ and positive $a_1,a_2$ satisfying $a_1+a_2 < |p_1-p_2|$.
\end{theorem}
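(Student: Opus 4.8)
\emph{Plan.} I would prove the two implications separately, treating the elementary geometric content first and isolating the extremality of the Ahlfors function as the crux.

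\emph{Sufficiency, topological part.} Starting from $a_1,a_2>0$ and $a_1+a_2<|p_1-p_2|$, the first step is a direct computation with $R=N/D$ in lowest terms. The critical points are the roots of $N'D-ND'$, and at such a point $z_0$ one has $R(z_0)=N'(z_0)/D'(z_0)=(a_1+a_2)/(2z_0-p_1-p_2)$; carrying out the elimination, the discriminant of the quadratic determining the critical points works out to $-a_1a_2(p_1-p_2)^2$, and a short manipulation then gives that \emph{both critical values have modulus exactly $(a_1+a_2)/|p_1-p_2|$}. Hence under our hypothesis both critical values lie in $\UnitDisk$ and $R$ has no critical points on $\partial X$ or over $\RiemannSphere\setminus\overline{\UnitDisk}$. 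Since a connected degree-$2$ branched covering of the disk branched over two interior points is an annulus (its restriction over $\UnitCircle$ being a trivial double cover), $X:=R^{-1}(\UnitDisk)$ is a non-degenerate $2$-connected domain containing $\infty$, and $R:X\to\UnitDisk$ is a proper holomorphic map of degree $2$ with $R(\infty)=0$ and $R'(\infty)=a_1+a_2>0$.

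\emph{Sufficiency, identifying the extremal map.} The remaining and main difficulty is to show that $R$ is \emph{the} Ahlfors function of $X$, not merely a proper map. I see two routes. One is to verify the dual-extremal-problem (Garabedian) characterization directly: exhibit the dual function of $X$ out of $R$ and check the boundary relation witnessing extremality, the starting point being that the positive $1$-form $dw/(iw)$ on $\UnitCircle$ pulls back under $R$ to a positive $1$-form on $\partial X=R^{-1}(\UnitCircle)$ because $R$ restricts there to an orientation-preserving covering of $\UnitCircle$; the work is to correct this so that the resulting dual function has the required analytic structure. The other route uses Theorem~\ref{JT}: the Ahlfors function $f$ of $X$ is, up to a biholomorphism $g$ normalized by $g(z)-z\to 0$ at $\infty$, a rational map $S$ of degree $2$ of the same special form. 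One then extends $g$ by repeated Schwarz reflection across the (real-algebraic) boundary curves of $X$ until it becomes a global conformal map of $\RiemannSphere$, hence Möbius, hence the identity; this forces $f$ itself to be a rational map of degree $2$ with $f^{-1}(\UnitDisk)=X$, and I would finish by showing that such a map, once normalized to have positive derivative at $\infty$, is uniquely determined by $X$ — and therefore equal to $R$ — by analyzing the Möbius involution exchanging the two fibres of a degree-$2$ rational map.

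\emph{Necessity.} Conversely, suppose $R$ is a rational Ahlfors function of degree $2$. By Theorem~\ref{theo Ahlfors} the domain $X=R^{-1}(\UnitDisk)$ is $2$-connected and $R:X\to\UnitDisk$ is proper of degree $2$, so $R$ has no critical point on $\partial X$ and, $X$ being a connected annulus, its two critical values lie in $\UnitDisk$. By the computation above the product of these critical values equals $\bigl((a_1+a_2)/(p_1-p_2)\bigr)^2$, whence $|a_1+a_2|<|p_1-p_2|$; and since $R$ is the Ahlfors function, $a_1+a_2=R'(\infty)=\gamma(\RiemannSphere\setminus X)>0$, which upgrades this to $a_1+a_2<|p_1-p_2|$. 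The delicate point is the positivity of each individual residue. Here I would argue locally at the two ``holes'': if $E_j$ denotes the component of $\RiemannSphere\setminus X$ containing $p_j$, then $R$ maps $E_j$ conformally onto $\RiemannSphere\setminus\overline{\UnitDisk}$, so the relevant inverse branch is a conformal map of $\UnitDisk$ onto $E_j^{\circ}$ sending $0$ to $p_j$ with derivative $a_j$; the claim $a_j>0$ is the assertion that this uniformization is ``unrotated,'' which I expect to extract from the extremality of $R$ — the positivity on $\partial X$ of the quadratic differential attached to the Ahlfors function — localized near $\partial E_j$ and combined with the Schwarz reflection of $R$ across that boundary curve.

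\emph{Main obstacle.} In both directions the hard part is the passage between ``proper holomorphic map of degree $n$'' and ``solution of Ahlfors' extremal problem'': connectivity, degree, and the location of critical values are soft, but identifying $R$ with the Ahlfors function — equivalently, proving the residue positivity in the necessity direction — needs the fine structure of the Ahlfors function. I expect the residue positivity to be the single hardest point, since it is precisely the property that will be shown to break down in higher degree.
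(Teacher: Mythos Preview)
Your critical-value computation is correct and matches the paper: when $a_1,a_2>0$ both critical values of $R$ have modulus $(a_1+a_2)/|p_1-p_2|$, and in general their product is $\big((a_1+a_2)/(p_1-p_2)\big)^2$. This cleanly gives the $2$-connectedness in the sufficiency direction and the inequality $0<a_1+a_2<|p_1-p_2|$ in the necessity direction. The two remaining steps, however, each contain a genuine gap.

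\emph{Sufficiency.} Route~2 is false as stated. Schwarz reflection across a general analytic Jordan curve is only a locally defined antiholomorphic involution; unlike inversion in a circle it is not global, so one cannot ``iterate until the sphere is covered.'' More decisively, your argument never uses $a_1,a_2>0$ beyond the topological step: if it worked, it would show that \emph{every} $2$-good $R$ with $R'(\infty)>0$ is Ahlfors. But take $a_1=2$, $a_2=-1$ and $|p_1-p_2|$ large; both critical values then lie in $\UnitDisk$, so $R$ is $2$-good with $R'(\infty)=1>0$, yet by the very theorem under discussion $R$ is \emph{not} Ahlfors. Hence for this $X=R^{-1}(\UnitDisk)$ the biholomorphism $g$ in the normalized factorization of the Ahlfors function is genuinely not M\"obius, and your reflection step fails. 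Route~1 via Garabedian duality is stated too vaguely to evaluate; note also that the positivity of $R'(z)\,dz/(iR(z))$ on $\partial X$ holds for \emph{any} $n$-good $R$, so it cannot by itself witness extremality. The paper's approach is different and short: since any two distinct points can be moved to $\Reals$ by a complex affine map, Observation~\ref{affine} reduces to the case $p_1,p_2\in\Reals$, and then Theorem~\ref{theo real} (the reflection-symmetric case, proved independently from Bieberbach--Grunsky uniqueness and the explicit Ahlfors function on real slit domains) applies directly.

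\emph{Necessity.} Your localization idea does not separate Ahlfors from non-Ahlfors, again because the boundary positivity you start from holds for every $n$-good map. The paper's mechanism is entirely different: the annulus $X$ carries an anticonformal involution $J$ fixing $\infty$ and each boundary component. With $Q(z):=\overline{R(\bar z)}$ the map $g(z):=\overline{J(z)}$ is a biholomorphism $X\to Q^{-1}(\UnitDisk)$ with $|\lim_{z\to\infty} g(z)/z|=1$, so a suitable unimodular multiple of $Q\circ g$ has derivative $\overline{R'(\infty)}=R'(\infty)$ at $\infty$; uniqueness of the Ahlfors function gives $R=\lambda Q\circ g$, and the uniqueness half of Theorem~\ref{Bellrep} then forces $g$ to be M\"obius, hence the identity (it fixes the two poles and $\infty$), whence $R=Q$ and both residues are real. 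Their positivity is then read off from Theorem~\ref{posres}. Your intuition that residue positivity is the crux is exactly right, but the tool is the hidden $z\mapsto\bar z$ symmetry of the annulus, not a local analysis near the holes.
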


As observed in \cite{JEONG}, the locus of parameters $a_j$ and $p_j$ corresponding to rational Ahlfors functions of degree $n$ has dimension $3n$. This is because the moduli space of a sphere with $n$ disks removed and one point marked has dimension $(3n-4)$, and there is a redundancy of representation due to the action of the $4$-dimensional group of complex affine transformations. In terms of dimensions, it thus seemed possible at first that the above theorem could generalize to higher degrees, but this is not the case.

\begin{theorem}
\label{3andup}
For every $n \geq 3$, the positivity of the residues of a rational map of degree $n$
is neither sufficient nor necessary for it to be a rational Ahlfors function.
\end{theorem}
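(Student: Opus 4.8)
The plan is to prove the insufficiency and the non-necessity of positivity separately. In each case I would settle degree $3$ with the numerical method for analytic capacity and then pass to all $n\ge 3$ by a soft bootstrapping argument that uses only monotonicity of capacity (for insufficiency) or continuous dependence of the Ahlfors function on the domain (for non-necessity).

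\textbf{Positivity is not sufficient.} First I would exhibit an explicit rational map $R(z)=\frac{a_1}{z-p_1}+\frac{a_2}{z-p_2}+\frac{a_3}{z-p_3}$ with $a_1,a_2,a_3>0$ for which $\Omega:=R^{-1}(\UnitDisk)$ is a non-degenerate $3$-connected domain containing $\infty$ and $R|_{\Omega}$ is proper of degree $3$; the only thing to verify is that $R^{-1}(\UnitCircle)$ is a disjoint union of three Jordan curves, which is a routine computation. Since the Ahlfors function $f$ of $\Omega$ is the unique competitor realizing $f'(\infty)=\gamma(\RiemannSphere\setminus\Omega)$, and $R$ is itself a competitor with $R'(\infty)=a_1+a_2+a_3>0$, the map $R$ equals $f$ if and only if $a_1+a_2+a_3=\gamma(\RiemannSphere\setminus\Omega)$. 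It then suffices to use the numerical scheme to certify the \emph{strict} inequality $a_1+a_2+a_3<\gamma(\RiemannSphere\setminus\Omega)$ for the chosen $R$; being a strict inequality, this is the kind of statement that can in principle be verified rigorously. For general $n\ge 3$ I would remove from $\Omega$ a small closed disk $\overline{D(q,\eps)}$ with $q\in\Omega\cap\ComplexPlane$ and replace $R$ by $R_\eps(z):=R(z)+\frac{\eps}{z-q}$. For $\eps$ small enough $R_\eps$ still has all positive residues, $R_\eps^{-1}(\UnitDisk)$ is a non-degenerate $4$-connected domain on which $R_\eps$ is proper of degree $4$, and $R_\eps'(\infty)=a_1+a_2+a_3+\eps<\gamma(\RiemannSphere\setminus\Omega)\le\gamma\bigl(\RiemannSphere\setminus R_\eps^{-1}(\UnitDisk)\bigr)$ by monotonicity of analytic capacity, so $R_\eps$ is not a rational Ahlfors function. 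Iterating produces such examples in every degree $n\ge 3$.

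\textbf{Positivity is not necessary.} Here I would start from a non-degenerate $3$-connected domain $\Omega$ equal to the complement of three disjoint closed disks --- two centred at conjugate non-real points $p,\bar p$ and the third centred on $\Reals$, with no symmetry beyond reflection in $\Reals$. By uniqueness of the Ahlfors function, a reflection-symmetric domain has $f(z)=\overline{f(\bar z)}$, so the Jeong--Taniguchi normal form $R$ of $f$ has real coefficients: two of its poles are the conjugate points $p,\bar p$ with conjugate residues $a,\bar a$, and the third pole is real with real residue. (One uses here that each of the three ``holes'' of $R^{-1}(\UnitDisk)$ carries exactly one pole of $R$, for otherwise the maximum principle would place an entire hole inside $R^{-1}(\UnitDisk)$.) It then suffices to use the numerical method --- applied to produce the extremal function, not merely the capacity --- to certify that $a$ is not real, again a strict inequality and hence in principle rigorous. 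Finally, to reach an arbitrary $n\ge 3$, I would adjoin to $\Omega$ finitely many further round disks lying deep inside $\Omega$ and let their radii tend to $0$; by continuous dependence of the Ahlfors function --- and therefore of its Jeong--Taniguchi normal form --- on the domain under Carath\'eodory convergence, the resulting non-degenerate $n$-connected domains have rational Ahlfors functions of degree $n$ with a pole whose residue is arbitrarily close to $a$, in particular non-real.

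\textbf{Main difficulties.} The crux lies in the two numerical certifications: exhibiting a positive-residue rational map $R$ of degree $3$ with $a_1+a_2+a_3<\gamma(\RiemannSphere\setminus\Omega)$, and exhibiting a reflection-symmetric domain whose off-axis Ahlfors residue is provably non-real. Both are strict-inequality statements, so they are susceptible of rigorous numerical verification, but they are precisely where one must leave closed-form reasoning. The remaining ingredients are routine or classical: the topological checks that the various preimage sets are non-degenerate $n$-connected domains and that the maps are proper of the stated degree; the monotonicity of analytic capacity under inclusion; and, for the final bootstrapping step, the continuous dependence of the Ahlfors function and of its canonical rational form on the domain. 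A minor point to watch is that the added poles, respectively disks, must be small relative to the ambient geometry so that components of the complement do not merge and the degree goes up by exactly one at each step.
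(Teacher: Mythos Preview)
Your insufficiency argument is essentially the paper's: certify numerically for $n=3$ that $R'(\infty)<\gamma(K)$ for a specific $3$-good map with positive residues, then bootstrap by adjoining small-residue poles. One technical correction: the monotonicity step $\gamma(\RiemannSphere\setminus\Omega)\le\gamma(\RiemannSphere\setminus R_\eps^{-1}(\UnitDisk))$ is not justified as written, since $R_\eps^{-1}(\UnitDisk)\subset\Omega=R^{-1}(\UnitDisk)$ generally fails (adding $\eps/(z-q)$ perturbs $\partial\Omega$ in both directions). The paper repairs this by comparing $(1/r)f$ on the enlarged set $f^{-1}(r\UnitDisk)$ for $r>1$, which does contain $R_\eps^{-1}(\UnitDisk)$ for small $\eps$, and then letting $r\to 1$; this is an easy fix.

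Your non-necessity argument, however, departs from the paper and has a genuine gap. You propose to compute the Ahlfors function of a specific reflection-symmetric $3$-connected circle domain, extract its Bell representation, and certify that the off-axis residue is non-real. The paper explicitly states that its authors were \emph{unable to find any explicit examples} of rational Ahlfors functions with non-positive residues --- precisely what your plan requires. The numerical method in the paper produces only bounds on $\gamma(K)$, not the Ahlfors function, and certainly not the normalized biholomorphism $g$ in $f=R\circ g$ needed to read off the residues of $R$; in particular the poles of $R$ are not the disk centers $p,\bar p$ as you assert. The paper's proof is instead indirect: it shows that the Ahlfors section $A:\mathcal{M}\to\mathcal{R}$ from circular $n$-connected domains to $n$-good maps is continuous, injective, and has closed image; if $A(\mathcal{M})\subset\mathcal{R}^+$, then since $\dim\mathcal{M}=\dim\mathcal{R}^+=3n$, Brouwer's invariance of domain would make $A(\mathcal{M})$ open and closed in the connected set $\mathcal{R}^+$, hence all of $\mathcal{R}^+$ --- contradicting the insufficiency half. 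This yields existence without exhibiting a single example. Your bootstrapping from $n=3$ to higher $n$ by letting auxiliary radii tend to $0$ is also delicate: that is a degeneration to the boundary of $\mathcal{M}$, where the degree of the Ahlfors function drops and the continuity you invoke is not the interior continuity proved in the paper.
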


However, in any degree, there exist many rational Ahlfors functions with po\-sitive residues. Here is a $2n$-dimensional family of examples of degree $n$.

\begin{theorem}
\label{real}
Let $R$ be a rational map of degree $n$ with only real poles and real residues such that $R^{-1}(\UnitDisk)$ is $n$-connected. Then $R$ is a rational Ahlfors function if and only if all of its residues are positive.
\end{theorem}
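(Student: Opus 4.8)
The strategy is built around the reflection symmetry of $X := R^{-1}(\UnitDisk)$ across the circle $\hat{\Reals} := \Reals \cup \{\infty\}$. Since $R$ has real coefficients, $X$ is symmetric about $\hat{\Reals}$; by uniqueness of the Ahlfors function (and because $z \mapsto \overline{f(\bar z)}$ is again a competitor with the same derivative at $\infty$, the latter being automatically real) the Ahlfors function $f$ of $X$ is symmetric, and averaging any competitor $h$ with $z \mapsto \overline{h(\bar z)}$ (the average still lands in the convex set $\UnitDisk$) shows that the extremal value $\sup \Re h'(\infty)$ is already achieved among symmetric competitors. I will therefore work with symmetric functions, which all restrict reasonably to $X^{+} := X \cap \mathbb{H}$, with $\mathbb{H}$ the open upper half-plane.

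Assuming all residues positive, the first step is to record the combinatorial structure this forces. Since $R'(z) = -\sum_j a_j/(z-p_j)^2 < 0$ for real $z$ off the poles, $R$ is strictly decreasing between consecutive poles and restricts to an orientation-reversing degree-$n$ self-covering of $\hat\Reals$. Hence $X \cap \hat\Reals$ is a disjoint union of $n$ open arcs $I_1,\dots,I_n$, on each of which $R$ maps decreasingly and homeomorphically onto $(-1,1)$; the complement $E := \RiemannSphere \setminus X$ has exactly $n$ components $E_1,\dots,E_n$, each symmetric and meeting $\hat\Reals$ in an arc containing a single pole; and (by a sign count of the numerator) the $n-1$ finite zeros of $R$ are real, one in each bounded gap between consecutive poles. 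Consequently $X^{+}$ is simply connected, its boundary the Jordan curve formed by alternating the upper boundary arcs of the $E_m$ with the closures of the $I_m$, and $\infty \in \partial X^{+}$ is sent by $R$ to $0 \in (-1,1)$. Since $R' < 0$ along each $I_m$, the harmonic function $\Im R$ is $\le 0$ on $\partial X^{+}$, hence $< 0$ on $X^{+}$; from this one gets $R^{-1}(\UnitDisk^{-}) = X^{+}$ (with $\UnitDisk^{-}$ the open lower half-disk), so $R$ restricts to a proper holomorphic map $X^{+} \to \UnitDisk^{-}$ of degree $n$, and symmetrically $X^{-} \to \UnitDisk^{+}$.

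The crux of the forward direction is the corresponding rigidity for the Ahlfors function itself: that $f$ also maps $X^{+}$ properly onto one of the half-disks, which must then be $\UnitDisk^{-}$ (the choice $\UnitDisk^{+}$ would give $f'(\infty) < 0$). Granting this, both $f$ and $R$ arise, by Schwarz reflection across the arcs $I_1,\dots,I_n$, from proper degree-$n$ maps of the simply connected domain $X^{+}$ onto $\UnitDisk^{-}$ carrying $\infty$ to $0$; composing with a Riemann map $\Phi \colon \UnitDisk \to X^{+}$ with $\Phi(1) = \infty$ turns the maximization of the derivative at $\infty$ into the maximization of the angular derivative at $1$ of a degree-$n$ Blaschke product whose boundary map has prescribed combinatorics, and one checks that such an extremal Blaschke product is unique — it is the one coming from $R$ — so $f = R$. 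To prove the rigidity I would show that a symmetric proper degree-$n$ map $X \to \UnitDisk$ failing to send $X^{+}$ properly onto a half-disk — equivalently, having a critical point on some $I_m$ or a zero off $\hat\Reals$ — cannot be extremal, since the resulting value distribution on $X^{+}$ can be perturbed so as to strictly increase $\Re f'(\infty)$. I expect this to be the main obstacle, and it is where the reality of the poles is used essentially: by Theorem~\ref{3andup} the analogous rigidity fails for $n \ge 3$ once complex poles are allowed.

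For the converse I would argue as follows. Suppose $R$, with real poles and residues, is a rational Ahlfors function, so $R$ is the Ahlfors function of $X$, which is then $n$-connected by Ahlfors' theorem, with $\partial X = R^{-1}(\UnitCircle)$ a union of analytic Jordan curves bounding Jordan domains $E_1,\dots,E_n$. Being the derivative at infinity of an Ahlfors function of a non-degenerate domain, $\sum_j a_j = R'(\infty) > 0$, so the residues are not all negative. Moreover every $E_m$ contains a pole: a pole-free $E_m$ would make $R$ holomorphic across $\overline{E_m}$, and then $R|_{\partial E_m}$ would wind zero times about $\UnitDisk$ by a degree argument on $E_m$, contradicting that each boundary curve winds exactly once under a degree-$n$ Ahlfors function; with $n$ poles and $n$ domains, each $E_m$ contains exactly one, necessarily real. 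Injectivity of $R$ on each $\partial E_m$ now recovers the combinatorial picture of the previous paragraph, with the two endpoints of each $I_m$ sent to $+1$ or $-1$ according to the signs of the two adjacent residues; a single negative residue would force some $I_m$ to have both endpoints mapped to the same value, which makes $R|_{X^{+}}$ cover both half-disks with positive multiplicity and so violates the rigidity statement (applied to $R = f$). Therefore all residues have a common sign, positive by the first remark.
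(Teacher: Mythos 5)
Your setup is sound: the symmetrization of competitors, the combinatorics forced by positive residues, and the fact that $R$ restricts to a proper degree-$n$ map $X^{+}\to\UnitDisk^{-}$ all check out (for the last point it is cleaner to note that $R^{-1}(\hat\Reals)=\hat\Reals$ because $R$ already takes every real value $n$ times on $\hat\Reals$, so $R(X^{+})$ lies in one open half-disk, determined by $R'<0$ on the arcs $I_m$). But the argument has a genuine gap exactly where you flag it: the ``rigidity'' statement that the Ahlfors function $f$ of $X$ also maps $X^{+}$ properly onto a half-disk --- equivalently, that $f$ is real-valued and critical-point-free on $X\cap\hat\Reals$ --- is asserted, not proved, and both directions of your argument depend on it. The proposed mechanism (``the value distribution on $X^{+}$ can be perturbed so as to strictly increase $\Re f'(\infty)$'') is not carried out and is not a routine variational step: one must rule out a symmetric extremal competitor with a conjugate pair of zeros off the real axis, and nothing in your sketch does this. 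The paper supplies precisely this missing input by a different, non-variational route: using Pommerenke's theorem that the analytic capacity of a union of real intervals $E$ equals $|E|/4$, it writes down the Ahlfors function of $\RiemannSphere\setminus E$ in closed form and reads off that it is real-valued with strictly negative derivative on $\Reals\setminus E$; since the symmetric domain $X$ is conformally equivalent to such a slit domain by a map that is increasing on $X\cap\Reals$, the transformation law transports this monotonicity to $f$ on $X$. That single lemma yields both your rigidity (hence sufficiency, finished by Bieberbach--Grunsky uniqueness once $f(\beta_j)=R(\beta_j)=1$) and the converse (a negative residue forces a real critical point of $R$ between two poles, which would lie in $X\cap\Reals$, contradicting the monotonicity).

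Two further points, even granting rigidity. First, your claim that the extremal degree-$n$ Blaschke product ``with prescribed boundary combinatorics'' is unique and equals the one coming from $R$ is itself unproved; the paper avoids this by invoking the Bieberbach--Grunsky theorem (a proper degree-$n$ map $X\to\UnitDisk$ vanishing at $\infty$ is determined by the preimage of $1$ on each boundary component), which is the cleaner way to close the sufficiency argument once you know $f$ and $R$ agree at the points $\beta_j$. Second, in the converse you should make sure the monotonicity/critical-point statement is what you actually use: a single negative residue does not obviously scramble the endpoint values of some $I_m$ in the way you describe (the arcs $I_m$ need not be positioned as in the positive-residue case), whereas the critical-point argument --- consecutive residues of opposite sign force $R(t)\to+\infty$ (or $-\infty$) at both ends of a gap $(p_j,p_{j+1})$, hence a real critical point there, which lies in $X$ by your Lemma on $n$-good maps --- is airtight once rigidity is available.
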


The rest of the paper is organized as follows. In section 2, we give a proof of Theorem \ref{JT} and supplement it with a uniqueness statement. Section 3 is devoted to the proof of Theorem \ref{real}. A family of rational Ahlfors functions with rotational symmetry is exhibited in section 4. Section 5 contains the proof of Theorem \ref{deg2}.  In section 6, we define analytic capacity and summarize a method due to the second author and Ransford \cite{YOU} for its computation. We illustrate the method with various rational Ahlfors functions, and give an example of a rational map of degree $3$ with positive residues which is not Ahlfors. The example is then generalized to higher degrees. Finally, in section 7, we prove that the residues of a rational Ahlfors function are not necessarily positive.

\section{The Bell representation theorem}

\subsection{Restrictions of rational maps}

We are interested in rational maps $R$ of degree $n$ such that the open set
$$R^{-1}(\UnitDisk)= \{\, z \in \RiemannSphere \,:\, |R(z)|<1 \,\}$$
is an $n$-connected domain. A version of the following lemma appears in \cite[Lemma 2.7]{JEONG2}.

\begin{lemma} \label{nconnected}
Let $R$ be a rational map of degree $n$. The following are equivalent :
\begin{enumerate}
\item[\emph{(1)}] $R^{-1}(\RiemannSphere \setminus \UnitDisk)$ has at least $n$ connected components;
\item[\emph{(2)}] $R^{-1}(\UnitDisk)$ and $R^{-1}(\RiemannSphere \setminus \UnitDisk)$ have $1$ and $n$ connected components respectively;
\item[\emph{(3)}] $R^{-1}(\UnitDisk)$ is connected and bounded by $n$ disjoint analytic Jordan curves;
\item[\emph{(4)}] $R$ maps each component of $R^{-1}(\RiemannSphere \setminus \UnitDisk)$ homeomorphically onto $\RiemannSphere \setminus \UnitDisk$;
\item[\emph{(5)}] all the critical values of $R$ are in $\UnitDisk$.
\end{enumerate}
\end{lemma}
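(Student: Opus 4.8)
The plan is to prove the five conditions equivalent by running the cycle $(5)\Rightarrow(4)\Rightarrow(3)\Rightarrow(2)\Rightarrow(1)\Rightarrow(5)$; only the last implication is substantial. Write $E:=\RiemannSphere\setminus\overline{\UnitDisk}$ for the open exterior disk, so that $\RiemannSphere\setminus\UnitDisk=\overline{E}=E\cup\UnitCircle$ and both $E$ and $\UnitDisk$ are conformal disks with common boundary $\UnitCircle$; note that $R^{-1}(\UnitDisk)$ and $R^{-1}(E)$ are nonempty because $R$ is surjective. I will use three standard facts: (a) since $R\colon\RiemannSphere\to\RiemannSphere$ is a branched cover of degree $n$, for any connected open $U\subseteq\RiemannSphere$ the restriction $R\colon R^{-1}(U)\to U$ is proper and carries each connected component $W$ of $R^{-1}(U)$ properly onto $U$ with some degree $\geq 1$, the degrees over $U$ summing to $n$; (b) a proper local homeomorphism onto a connected, simply connected surface (possibly with boundary) is a trivial covering; (c) cutting $\RiemannSphere$ along $m$ disjoint Jordan curves produces $m+1$ complementary regions.

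For $(5)\Rightarrow(4)$: if every critical value of $R$ lies in $\UnitDisk$, then $R$ has no critical point over $\RiemannSphere\setminus\UnitDisk$, so $R\colon R^{-1}(\RiemannSphere\setminus\UnitDisk)\to\RiemannSphere\setminus\UnitDisk$ is a proper local homeomorphism; since $\RiemannSphere\setminus\UnitDisk$ is simply connected, (b) makes it a trivial covering, so $R$ maps each component homeomorphically onto $\RiemannSphere\setminus\UnitDisk$. For $(4)\Rightarrow(3)$: each component has degree $1$ and the degrees sum to $n$, so there are exactly $n$ of them, each a closed Jordan domain; $R$ cannot have a critical point on the boundary of such a component (locally $R^{-1}(\overline E)$ would be $k\geq 2$ closed sectors meeting only at the critical point, which is not a manifold with boundary), hence $R^{-1}(\UnitCircle)$ is a union of $n$ disjoint analytic Jordan curves; by (c) its complement has $n+1$ components, of which $n$ make up $R^{-1}(E)$, leaving $R^{-1}(\UnitDisk)$ connected. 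Next, $(3)\Rightarrow(2)$ is the standard fact that a domain in the sphere bounded by $n$ disjoint Jordan curves has complement with exactly $n$ components, and $(2)\Rightarrow(1)$ is immediate.

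The heart of the argument, and the step I expect to be the main obstacle, is $(1)\Rightarrow(5)$. Suppose $R^{-1}(\RiemannSphere\setminus\UnitDisk)$ has at least $n$ components. Let $V_1,\dots,V_c$ be the components of the open set $R^{-1}(E)$, and $d_i\geq 1$ the degree of $R|_{V_i}$, so $\sum_i d_i=n$ and therefore $c\leq n$. Since $R$ is an open map, every point of $R^{-1}(\UnitCircle)$ lies in $\overline{R^{-1}(E)}$, so $R^{-1}(\RiemannSphere\setminus\UnitDisk)=\overline{V_1}\cup\dots\cup\overline{V_c}$ has at most $c$ components; together with the hypothesis this forces $c=n$, every $d_i=1$, and the closures $\overline{V_1},\dots,\overline{V_n}$ to be pairwise disjoint. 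In particular each $R|_{V_i}\colon V_i\to E$ is a biholomorphism, so $R$ has no critical point in $R^{-1}(E)$, and it remains only to rule out critical points on $R^{-1}(\UnitCircle)$.

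For that I would argue locally. If $z_0\in R^{-1}(\UnitCircle)$ were a critical point of local degree $k\geq 2$, write $R(z)=R(z_0)+\phi(z)^k$ near $z_0$, with $\phi$ a local biholomorphism fixing $z_0$. Because $|R(z_0)|=1$, in a small neighborhood of $z_0$ the set $R^{-1}(E)$ is a union of exactly $k$ disjoint ``prongs'' --- the $\phi$-preimages of the $k$ sectors into which the map $w\mapsto w^k$ pulls back a one-sided neighborhood of $0$ on the circle $\{|R(z_0)+w|=1\}$ --- each having $z_0$ in its closure, and $R$ carries each prong homeomorphically onto one and the same one-sided neighborhood of $R(z_0)$ in $E$. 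Now $z_0$ belongs to exactly one of the pairwise disjoint closed sets $\overline{V_i}$; since each prong is connected, is contained in $R^{-1}(E)=V_1\sqcup\dots\sqcup V_n$, and has $z_0$ in its closure, all $k\geq 2$ prongs lie in that single $V_i$ --- which contradicts the injectivity of the biholomorphism $R|_{V_i}$, since two distinct prongs have points in common in their shared image. Hence $R$ also has no critical point on $R^{-1}(\UnitCircle)$, so all the ramification of $R$ lies over $\UnitDisk$; that is $(5)$, and the cycle closes.
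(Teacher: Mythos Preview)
Your proof is correct and uses essentially the same ingredients as the paper: a degree count showing the preimage of the closed outer disk splits into $n$ pieces each hit with degree one, a local normal-form analysis ruling out critical points on $R^{-1}(\UnitCircle)$, and the observation that a proper unbranched map over a simply connected disk is a trivial cover. The only cosmetic difference is the direction of the cycle: the paper proves $(1)\Rightarrow(4)\Rightarrow(5)\Rightarrow(3)$ (with $(3)\Rightarrow(2)\Rightarrow(1)$ trivial), doing the degree count directly on the closed set $R^{-1}(\RiemannSphere\setminus\UnitDisk)$ for $(1)\Rightarrow(4)$, whereas you run $(5)\Rightarrow(4)\Rightarrow(3)\Rightarrow(2)\Rightarrow(1)\Rightarrow(5)$ and carry out the degree count on the open set $R^{-1}(E)$ before passing to closures. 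Your prong argument for excluding boundary critical points is a reformulation of the paper's observation that a critical point of order $m$ on $R^{-1}(\UnitCircle)$ would make that graph have a vertex of valence $2(m+1)$, incompatible with a disjoint union of Jordan curves.
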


\begin{proof}
The implications $(3)\Rightarrow(2)\Rightarrow(1)$ are trivial. For the rest of the proof, we let $X:=R^{-1}(\UnitDisk)$ and $Y:=R^{-1}(\RiemannSphere \setminus \UnitDisk)$.

\paragraph{$(1) \Rightarrow (4)$}
Suppose that $Y$ has $m$ connected components, say $K_1, ..., K_m$, where $m\geq n$. Since $R$ is open, $K_j$ is compact, and $\RiemannSphere \setminus \UnitDisk$ is connected,  the restriction $R : K_j \to \RiemannSphere \setminus \UnitDisk$ is surjective for every $j$. Therefore, $m=n$ and $R$ is injective on each $K_j$, for otherwise some point in $\RiemannSphere \setminus \UnitDisk$ would have more than $n$ preimages by $R$. Thus each restriction $R: K_j \to \RiemannSphere \setminus \UnitDisk$ is continuous, open and bijective and hence a homeomorphism.

\paragraph{$(4) \Rightarrow (5)$}
Suppose that $R$ maps each component of $Y$ homeomorphically onto $\RiemannSphere \setminus \UnitDisk$. In particular, $R$ is locally injective on $Y$ and thus has no critical points in the interior of $Y$.  If $R$ has a critical point $p$ of order $m\geq 1$ on $\partial Y$, the topological graph $\partial Y= R^{-1}(\partial\UnitDisk)$ has a vertex of order $2(m+1) > 2$ at $p$. This is because $R$ is locally like $z\mapsto z^{m+1}$ near $p$. But for each component $K$ of $Y$, the restriction $R : \partial K \to \partial \UnitDisk$ is a homeomorphism, so that $\partial Y$ is a union of disjoint Jordan curves. It follows that all the critical points of $R$ are in $X$, so that all the critical values of $R$ are in $\UnitDisk$. This proves $(4) \Rightarrow (5)$.

\paragraph{$(5) \Rightarrow (3)$} If all the critical values of $R$ are in $\UnitDisk$, then all its critical points are in $X$. The restriction $R : Y \to \RiemannSphere \setminus \UnitDisk$ is thus an $n$-sheeted unbranched covering map. Since the closed disk $\RiemannSphere \setminus \UnitDisk$ is simply connected, $Y$ has exactly $n$ connected components, each homeomorphic to $\RiemannSphere \setminus \UnitDisk$. The complement $X$ of these $n$ disjoint Jordan disks in the Riemann sphere is necessarily connected (see e.g. \cite[11.4, p.117]{NEW}). Moreover, as $R : \partial X \to \partial\UnitDisk$ is locally bi-analytic, each of the $n$ Jordan curves bounding $X$ is analytic.
\end{proof}

A rational map $R$ of degree $n$ which satisfies any (and hence all) of the conditions of Lemma \ref{nconnected}, and satisfies in addition $R(\infty)=0$, will be called \textit{$n$-good}. An $n$-good rational map must have only simple poles, since it cannot have $\infty$ as a critical value. Every $n$-good rational map $R$ can thus be written as a sum of partial fractions of the form
$$
R(z)=\sum_{j=1}^n \frac{a_j}{z-p_j}
$$
for some $a_1,...,a_n \in \ComplexPlane \setminus \{ 0 \}$ and distinct $p_1,...,p_n \in \ComplexPlane$, and this representation is unique up to permutation of the factors.

Observe that if $R$ is $n$-good, then $R^{-1}(\UnitDisk)$ is a non-degenerate $n$-connected domain and the restriction $R : R^{-1}(\UnitDisk) \to \UnitDisk$ is a proper holomorphic map of degree $n$. In the next subsection, we will see that conversely, if $X$ is a non-degenerate $n$-connected domain and $f : X \to \UnitDisk$ is a proper holomorphic map of degree $n$, then up to a biholomorphism, $f$ is an $n$-good rational map.

\subsection{The Bell representation theorem}
\label{sec1}

The existence part of the following theo\-rem is due to Jeong and Taniguchi \textbf{\cite{JEONG}}. It was dubbed ``the Bell representation theorem'', as it was anticipated by Steven Bell. The uniqueness statement is new.

\begin{theorem} \label{Bellrep}
Let $X$ be a non-degenerate $n$-connected domain and let $f: X \to \UnitDisk$ be a proper holomorphic map of degree $n$. Then there exists a rational map $R$ of degree $n$ and a biholomorphism $g : X \to R^{-1}(\UnitDisk)$ such that $f = R \circ g$. Moreover, if $f=Q \circ h$ for another rational map $Q$ of degree $n$ and a biholomorphism $h : X \to Q^{-1}(\UnitDisk)$, then there is a M\"obius transformation $M$ such that $Q=R\circ M^{-1}$ and $h=M \circ g$.
\end{theorem}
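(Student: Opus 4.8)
The plan is to prove existence by a gluing construction and uniqueness by an analytic continuation argument, so I will describe each in turn.

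For existence, I would first reduce to the case where $\partial X$ consists of $n$ disjoint analytic Jordan curves $\gamma_1,\dots,\gamma_n$, since every non-degenerate $n$-connected domain is conformally equivalent to such a domain and replacing $X$ by this model only alters $g$. Because $f$ is proper of degree $n$, for small $\eps$ the set $\{\,z\in X:|f(z)|\ge 1-\eps\,\}$ is a disjoint union of $n$ collar neighbourhoods of the curves $\gamma_j$, on each of which $f$ is proper of some degree $d_j\ge 1$ with $\sum_j d_j=n$; hence $d_j=1$, so $f$ is injective near each $\gamma_j$ and maps it homeomorphically onto $\UnitCircle$. In particular $f$ has no critical points near $\partial X$ and extends, by Schwarz reflection across each analytic arc $\gamma_j$, to a biholomorphism of a two-sided collar of $\gamma_j$ onto an annulus $\{\,\zeta:1-\eta_j<|\zeta|<1+\eta_j\,\}$. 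I would then form a compact Riemann surface $Y$ by gluing to $X$, along the inner collar $A_j^+\subset X$ of each $\gamma_j$, a disk $V_j:=\{\,\zeta:|\zeta|>1-\eta_j\,\}\cup\{\infty\}$, using the biholomorphism $\zeta=f(z)$ as transition map. Topologically $Y$ is obtained from a sphere with $n$ holes by capping each hole, so $Y$ is a compact simply connected Riemann surface, hence biholomorphic to $\RiemannSphere$ by uniformization.

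The map $f$ on $X$ and the inclusions $\zeta\mapsto\zeta$ on the disks $V_j$ agree on the overlaps $A_j^+$ (there $f(z)=\zeta$ by construction of the gluing), so together they define a holomorphic map $F:Y\to\RiemannSphere$; transporting $F$ through a biholomorphism $Y\cong\RiemannSphere$ produces a rational map $R$. It remains to check that $\deg R=n$ --- the $n$ preimages of a regular value $w_0\in\UnitDisk$ are $f^{-1}(w_0)$, which lies in the copy of $X$ inside $Y$ --- that $R^{-1}(\UnitDisk)$ equals the image of $X$ (a point of $V_j\setminus A_j^+$ maps under $F$ into $\{|\zeta|\ge 1\}$), and that $f=R\circ g$ for $g$ the inclusion $X\hookrightarrow Y\cong\RiemannSphere$.

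For uniqueness, suppose also $f=Q\circ h$ with $Q$ rational of degree $n$ and $h:X\to Q^{-1}(\UnitDisk)$ a biholomorphism. The relation $h=M\circ g$ forces $M=h\circ g^{-1}:R^{-1}(\UnitDisk)\to Q^{-1}(\UnitDisk)$, so the only candidate is this map; it is a biholomorphism and satisfies $Q\circ M=R$ on $R^{-1}(\UnitDisk)$, and the task is to show $M$ extends to a Möbius transformation. Note that $R^{-1}(\UnitDisk)\cong X\cong Q^{-1}(\UnitDisk)$ are non-degenerate $n$-connected, so $R$ and $Q$ both satisfy all the conditions of Lemma \ref{nconnected}; in particular their critical values lie in $\UnitDisk$, and each maps the $n$ components of the complement of its $\UnitDisk$-preimage homeomorphically onto $\RiemannSphere\setminus\UnitDisk$; write these components as $E_1,\dots,E_n$ for $R$ and $E_1',\dots,E_n'$ for $Q$. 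For small $\eps$, $Q^{-1}(\{\,\zeta:1-\eps<|\zeta|<1\,\})$ is a disjoint union of $n$ collars $\mathcal{C}_1',\dots,\mathcal{C}_n'$, one inside each boundary curve of $Q^{-1}(\UnitDisk)$, each mapped biholomorphically onto the annulus by $Q$; the analogous statement holds for $R$, with collars $\mathcal{C}_1,\dots,\mathcal{C}_n$. Since $Q\circ M=R$ gives $M^{-1}(\bigcup_i\mathcal{C}_i')=\bigcup_j\mathcal{C}_j$ and $M$ is a homeomorphism, $M$ carries each $\mathcal{C}_j$ onto some $\mathcal{C}_{\sigma(j)}'$, so on $\mathcal{C}_j$ we have $M=(Q|_{\mathcal{C}_{\sigma(j)}'})^{-1}\circ R$. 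Reflecting $Q$ across the relevant analytic boundary curve shows $(Q|_{\mathcal{C}_{\sigma(j)}'})^{-1}$ is single-valued and holomorphic on a slightly larger annulus $\{\,\zeta:1-\eps<|\zeta|<1+\eps'\,\}$, so this formula extends $M$ holomorphically past $\gamma_j$; and since $R$ maps $E_j$ and $Q$ maps $E_{\sigma(j)}'$ homeomorphically onto $\RiemannSphere\setminus\UnitDisk$, the formula $M=(Q|_{E_{\sigma(j)}'})^{-1}\circ R$ extends $M$ holomorphically over all of $E_j$, compatibly with the collar extension. Patching these together yields a holomorphic self-map of $\RiemannSphere$, that is, a rational map; by analytic continuation $Q\circ M=R$ holds on all of $\RiemannSphere$, whence $n=\deg R=\deg Q\cdot\deg M=n\cdot\deg M$, so $\deg M=1$. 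Thus $M$ is a Möbius transformation, and $h=M\circ g$ and $Q=R\circ M^{-1}$ follow at once.

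The main obstacle throughout is controlling the behaviour of the maps across the analytic boundary curves: in the existence part this is the claim that $f$ restricts to a homeomorphism on each $\gamma_j$ (which makes the gluing datum $\zeta=f(z)$ single-valued), and in the uniqueness part it is the claim that the branch of $Q^{-1}$ used to extend $M$ can be chosen consistently all around each $\gamma_j$ and over the component $E_j$. Both reduce to the same two ingredients --- a degree count on the finitely many preimage collars of a thin annulus $\{\,\zeta:1-\eps<|\zeta|<1\,\}$, and the fact from Lemma \ref{nconnected} that a rational map of degree $n$ with $n$-connected $\UnitDisk$-preimage is a homeomorphism on each complementary component --- after which the remaining verifications are bookkeeping. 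The only genuinely external inputs I would quote without proof are that a non-degenerate finitely connected domain is conformally equivalent to one bounded by analytic Jordan curves, and that biholomorphisms between such domains extend continuously to the closed domains.
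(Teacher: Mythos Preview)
Your proof is correct and follows the same strategy as the paper: for existence you cap $X$ with $n$ disks to form a sphere (gluing along open collars, where the paper welds closed disks along the boundary curves using the analytic extension of $f$), and for uniqueness you extend $M=h\circ g^{-1}$ over the complementary components via the branches $Q^{-1}\circ R$ (crossing $\partial R^{-1}(\UnitDisk)$ by analytic continuation on collars, where the paper defines $M$ piecewise and invokes removability of analytic curves via Morera's theorem). These are minor technical variations of the same argument.
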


\begin{proof}
By applying the Riemann mapping theorem $n$ times, we may assume that $X$ is bounded by $n$ disjoint analytic Jordan curves.

Then the map $f : X \to \UnitDisk$ extends analytically to a neighborhood of $\overline X$ by the Schwarz reflection principle. Let $E$ be a connected component of $\partial X$. The extension $f : E \to \partial\UnitDisk$ is surjective, and thus injective, since $f$ has degree $n$ on $\overline X$ and $\partial X$ has $n$ connected components. Moreover, $f$ has no critical points on $E$. Indeed, if $f$ has a critical point at $p \in E$, then $f$ behaves like $z \mapsto z^m$ for some $m>1$ near $p$, so that some arc in $X$ ending at $p$ is mapped to $\partial \UnitDisk$ by $f$. This is impossible since $f$ maps $X$ into $\UnitDisk$. The inverse map $f^{-1} : \partial\UnitDisk \to E$ is thus analytic.

We next construct a Riemann surface $Y$ by welding $n$ disks to $\overline X$ using the boundary extension of $f$. Label the components of $\partial X$ as $E_1,...,E_n$, let $D_1,...,D_n$ be disjoint copies of the disk $\RiemannSphere \setminus \UnitDisk$, and let $\pi_j : D_j \to \RiemannSphere \setminus \UnitDisk$ be the projection map which forgets the label. We define
$$
Y := \overline X \sqcup_{j=1}^n D_j / \sim,
$$
where $\sim$ identifies $z \in E_j$ with $w \in \partial D_j$ if $f(z)=\pi_j(w)$. Since the gluing homeomorphism $\pi_j^{-1} \circ f : E_j \to \partial D_j$ is bi-analytic for each $j$, $Y$ can be equipped with a complex atlas such that the inclusions $\overline X \hookrightarrow Y$ and $D_j \hookrightarrow Y$ are analytic.

Next, observe that $f$ can be extended to an analytic map $Y \to \RiemannSphere$ of degree $n$ by
$$
z \mapsto \begin{cases} f(z) & \text{if }z\in \overline{X} \\ \pi_j(z) & \text{if } z \in D_j.  \end{cases}
$$
We still denote this extension by $f$.

Since $Y$ is topologically a sphere, the uniformization theorem implies that there is a biholomorphism $g : Y \to \RiemannSphere$. The composition $f\circ g^{-1} : \RiemannSphere \to \RiemannSphere$ is analytic, and hence a rational map $R$, of the same degree $n$ as $f$. By construction, we have $f^{-1}(\UnitDisk)=X$, so that $g(X)=R^{-1}(\UnitDisk)$. This proves the existence part.

Suppose that $f$ factors in another way as $Q \circ h$ on $X$, where $Q$ is a rational map of degree $n$ and $h : X \to Q^{-1}(\UnitDisk)$ is a biholomorphism. Note that $R^{-1}(\UnitDisk)$ and $Q^{-1}(\UnitDisk)$ are $n$-connected domains so that Lemma \ref{nconnected} applies. Let $K_1,...,K_n$ be the connec\-ted components of $R^{-1}(\RiemannSphere \setminus \UnitDisk)$. Also let $L_1,...,L_n$ be the components of $Q^{-1}(\RiemannSphere \setminus \UnitDisk)$, labelled in such a way that the boundary extension of the biholomorphism
$$
h \circ g^{-1} :R^{-1}(\UnitDisk) \to Q^{-1}(\UnitDisk)
$$
maps $\partial K_j$ onto $\partial L_j$.

By Lemma \ref{nconnected}, $R$ maps each $K_j$ homeomorphically onto $\RiemannSphere \setminus \UnitDisk$ and similarly for $Q$. We define a map $M: \RiemannSphere \to \RiemannSphere$ by
$$
M(z) := \begin{cases} h \circ g^{-1}(z)  &  \text{if }z\in R^{-1}(\UnitDisk) \\ Q^{-1} \circ R(z) & \text{if }z \in R^{-1}(\RiemannSphere \setminus \UnitDisk), \end{cases}
$$
where the appropriate branch of $Q^{-1} \circ R$ is chosen so that it maps $K_j$ to $L_j$. Then $M$ is a homeomorphism and is conformal on the complement of $\partial R^{-1}(\UnitDisk)$. By Morera's theorem, analytic curves are removable for continuous maps holomorphic in their complement. Therefore, $M$ is conformal on all of $\RiemannSphere$ and hence a M\"obius transformation. By construction, we have $Q\circ M = R$ and $M \circ g = h$.
\end{proof}

If we require that $g$ in the above theorem satisfies a certain normalization, then the factorization $f=R \circ g$ becomes unique. In what follows, a meromorphic function $h$ defined in a neighborhood of $\infty$ in $\RiemannSphere$ is said to be \textit{normalized at infinity} if it satisfies
$$
\lim_{z \to \infty} h(z) - z = 0.
$$
Equivalently, $h$ is normalized at infinity if it
has a Laurent development of the form
$$
h(z)=z+\frac{b_1}{z}+\frac{b_2}{z^2}+\dots
$$
near $\infty$. The only M\"obius transformation which is normalized at infinity is the identity map.

\begin{corollary} \label{normalizedBellrep}
Let $X$ be a non-degenerate $n$-connected domain containing $\infty$ and let $f: X \rightarrow \UnitDisk$ be a proper holomorphic map of degree $n$ satisfying $f(\infty)=0$. Then there exists a unique $n$-good rational map $R$ and a unique biholomorphism $g : X \to R^{-1}(\UnitDisk)$ normalized at infinity such that $f=R\circ g$.
\end{corollary}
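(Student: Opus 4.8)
The plan is to upgrade the ``unique up to a M\"obius transformation'' conclusion of Theorem~\ref{Bellrep} to genuine uniqueness by fixing the normalization at infinity. By Theorem~\ref{Bellrep} there are a rational map $R_0$ of degree $n$ and a biholomorphism $g_0 : X \to R_0^{-1}(\UnitDisk)$ with $f = R_0 \circ g_0$, and every factorization of $f$ of this kind has the form $f = (R_0 \circ M^{-1}) \circ (M \circ g_0)$ for some M\"obius transformation $M$. It therefore suffices to prove the following elementary claim: there is exactly one M\"obius transformation $M$ for which $M \circ g_0$ is normalized at infinity. Granting this, I would set $g := M \circ g_0$ and $R := R_0 \circ M^{-1}$; then $f = R \circ g$ with $g$ normalized at infinity, so $g(\infty) = \infty$ and hence $R(\infty) = f(\infty) = 0$, while $R^{-1}(\UnitDisk) = g(X)$ is again a non-degenerate $n$-connected domain, so Lemma~\ref{nconnected} shows that $R$ is $n$-good.

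First I would establish the claim by a direct Laurent computation near $\infty$, in two cases. Since $\infty \in X$ and $g_0$ is conformal at $\infty$, either $g_0(\infty) = w_0 \in \ComplexPlane$ with $g_0(z) = w_0 + c_1 z^{-1} + c_2 z^{-2} + \cdots$ and $c_1 \neq 0$, or $g_0(\infty) = \infty$ with $g_0(z) = c_1 z + c_0 + c_{-1} z^{-1} + \cdots$ and $c_1 \neq 0$. In either case, if $M \circ g_0$ is to be normalized at infinity then $M$ must send $g_0(\infty)$ to $\infty$, which restricts $M$ to a $2$-parameter family. In the first case such an $M$ has the form $M(w) = a + \gamma/(w - w_0)$ with $\gamma \neq 0$, and
$$
(M \circ g_0)(z) = \frac{\gamma}{c_1}\, z + \Big( a - \frac{\gamma c_2}{c_1^{2}} \Big) + O(z^{-1}),
$$
so normalization is equivalent to $\gamma = c_1$ and $a = c_2/c_1$, a unique solution. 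In the second case $M(w) = \alpha w + \beta$, and $(M \circ g_0)(z) = \alpha c_1 z + (\alpha c_0 + \beta) + O(z^{-1})$, so normalization forces $\alpha = 1/c_1$ and $\beta = -c_0/c_1$, again uniquely.

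It remains to check the uniqueness of the pair $(R, g)$. Suppose $f = R' \circ g'$ with $R'$ an $n$-good rational map and $g' : X \to (R')^{-1}(\UnitDisk)$ a biholomorphism normalized at infinity. By the uniqueness part of Theorem~\ref{Bellrep} there is a M\"obius transformation $N$ with $R' = R \circ N^{-1}$ and $g' = N \circ g$. Since $g(z) \to \infty$ and $g'(z) = N(g(z)) \to \infty$ as $z \to \infty$, we get $N(\infty) = \infty$, so $N(w) = \alpha w + \beta$; comparing $g'(z) = \alpha g(z) + \beta = \alpha z + \beta + O(z^{-1})$ with $g'(z) = z + O(z^{-1})$ gives $\alpha = 1$ and $\beta = 0$, that is $N = \id$. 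Hence $R' = R$ and $g' = g$, as desired.

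I do not anticipate a real obstacle here: the computation is routine, and the only things to be careful about are not overlooking the case $g_0(\infty) = \infty$ and keeping track of the expansions. A slightly more conceptual phrasing of the main claim uses that the stabilizer of a germ normalized at infinity under post-composition by M\"obius transformations is trivial (which is the remark, already in the text, that the identity is the only M\"obius transformation normalized at infinity), but the explicit version above is probably the cleanest to write out.
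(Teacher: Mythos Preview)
Your proof is correct and follows exactly the route implicit in the paper: the corollary is stated there without proof, immediately after the remark that the identity is the only M\"obius transformation normalized at infinity, and this remark together with Theorem~\ref{Bellrep} is precisely what you use. Your explicit Laurent computation for existence (including the case $g_0(\infty)\in\ComplexPlane$) is more detail than the paper supplies, but it is the natural way to fill in the argument.
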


We call the factorization $f=R \circ g$ provided by Corollary \ref{normalizedBellrep} the \textit{normalized factorization of $f$}.

Given a non-degenerate $n$-connected domain $X$ containing $\infty$, one might wonder how many proper holomorphic maps $f : X \to \UnitDisk$ of degree $n$ satisfying $f(\infty)=0$ there are. The answer is contained in the following theorem, proofs of which can be found in \cite[Theorem 2.2]{BELL} or \cite[Theorem 3]{KHA}.

\begin{theorem}[Bieberbach \cite{BIEB}, Grunsky \cite{GRU}]
\label{normalizedBieberbach}
Let $X$ be a domain containing $\infty$ and bounded by $n$ disjoint analytic Jordan curves $E_1,...,E_n$. For each $j$, let $\alpha_j$ be any point in $E_j$. Then there exists a unique proper holomorphic map $f : X \to \UnitDisk$ of degree $n$ satisfying $f(\infty)=0$ whose continuous extension to $\partial X$ maps $\alpha_j$ to $1$ for each $j$.
\end{theorem}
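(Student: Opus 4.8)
The plan is to realise the desired function as the unique point in one fibre of a ``boundary evaluation'' map, which I would prove to be a covering of degree one.

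After applying the Riemann mapping theorem to each boundary curve we may assume the $E_j$ are analytic. Let $\mathcal P$ denote the set of all proper holomorphic maps $f:X\to\UnitDisk$ of degree $n$ with $f(\infty)=0$. By Corollary~\ref{normalizedBellrep} and Lemma~\ref{nconnected}, every $f\in\mathcal P$ extends across $\partial X$ by Schwarz reflection, carries each $E_j$ homeomorphically onto $\partial\UnitDisk$, and has exactly $n$ zeros in $X$ counted with multiplicity, one of which is $\infty$. If $w_0=\infty,w_1,\dots$ are these zeros, with multiplicities $m_0\ge1,m_1,\dots$ (so $\sum_k m_k=n$), then $\log|f|+\sum_k m_k\,g_X(\cdot,w_k)$ — with $g_X$ the Green's function of $X$ — is harmonic and bounded on $X$ and vanishes on $\partial X$, hence is identically $0$; so $\log|f|=-\sum_k m_k\,g_X(\cdot,w_k)$, and $f$ is determined by the divisor $\sum_k m_k w_k$ up to a unimodular constant. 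Conversely, such a divisor (with $m_k\ge1$, $w_0=\infty$, $\sum_k m_k=n$) arises from an element of $\mathcal P$ exactly when the harmonic conjugate of $-\sum_k m_k\,g_X(\cdot,w_k)$ is single-valued modulo $2\pi$, i.e. when $\sum_k m_k\,\omega_j(w_k)\in\mathbb Z$ for each $j$, where $\omega_j$ is the harmonic measure of $E_j$ in $X$. Since $\sum_k m_k\,\omega_j(w_k)\in(0,n)$ for every $j$ while $\sum_j\sum_k m_k\,\omega_j(w_k)=n$, this integrality is equivalent to the system
\[
\sum_k m_k\,\omega_j(w_k)=1\qquad(j=1,\dots,n),
\]
only $n-1$ equations of which are independent. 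Thus $\mathcal P$ is identified with the set of solutions of this system in $\mathrm{Sym}^{n-1}(X)$, times the circle $\UnitCircle$ of unimodular constants.

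I would then show that $\mathcal P$ is a compact smooth manifold of real dimension $n$ and that the evaluation map $\mathrm{ev}:\mathcal P\to\UnitCircle^{\,n}$, $\mathrm{ev}(f)=(f(\alpha_1),\dots,f(\alpha_n))$, is a local diffeomorphism. Compactness holds because the system above confines the $w_k$ to a compact subset of $X$: if some $w_k$ ran out to $E_{j_0}$ then $\omega_{j_0}(w_k)\to1$, making the $j_0$-th left side exceed $1$. For the manifold and local-diffeomorphism statements one invokes the implicit function theorem, the nondegeneracy required in each case being of period-matrix type; concretely it reduces to the fact that a holomorphic differential on the double $\widehat X$ of $X$ — a compact surface of genus $n-1$ — has only $2n-4$ zeros, so that a non-constant harmonic function on $X$ that is locally constant on $\partial X$ (such as $\sum_j c_j\,\omega_j$, or the harmonic conjugate of $\sum_k m_k\,g_X(\cdot,w_k)$) has at most $n-2$ critical points in $X$ and cannot be critical at $n-1$ prescribed points; this is the same input as the Riemann bilinear relations on $\widehat X$. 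Granting it, $\mathrm{ev}$ is a proper local diffeomorphism into the connected manifold $\UnitCircle^{\,n}$, hence a covering map of some degree $d\ge1$, equal over every point to the cardinality of the fibre; and the theorem asserts precisely that $d=1$.

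Establishing $d=1$ is the step I expect to be the main obstacle. I would compute $d$ at a convenient domain and use its invariance under deformation. Join $X$, through a path $X_t$ of $n$-connected domains each containing $\infty$ and carrying marked boundary points $\alpha_j(t)$, to a domain $X_0$ whose $n$ complementary disks are tiny and far apart. For such an $X_0$ a proper degree-$n$ map with a zero at $\infty$ is, up to lower-order terms, a product $\prod_{j=1}^{n}\eps_j e^{i\phi_j}/(z-p_j)$, and near the $j$-th boundary circle it reduces to $\eps_j e^{i\phi_j}/(z-p_j)$, whose value at $\alpha_j(0)$ runs once around $\partial\UnitDisk$ as $\phi_j$ does; so in suitable coordinates $\mathrm{ev}$ becomes a degree-one diffeomorphism of $\UnitCircle^{\,n}$, giving $d=1$ for $X_0$ and hence for $X$. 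The real work is to check that $(\mathcal P_{X_t},\mathrm{ev}_{X_t})$ varies as a continuous family of coverings — which rests on a version of the compactness above that is uniform in $t$ — and to justify the degenerate limit. Once $d=1$, the fibre $\mathrm{ev}^{-1}(1,\dots,1)$ consists of a single point, the unique $f$ we seek. (Alternatively one can follow Bell \cite[Theorem~2.2]{BELL}, writing a general element of $\mathcal P$ explicitly through the Szeg\H{o} and Garabedian kernels of $X$ and reading existence and uniqueness of the normalization off the formula; or obtain existence directly by solving, via a degree argument, the $2(n-1)$ real equations formed by the period system together with the equalities $\arg f(\alpha_1)=\dots=\arg f(\alpha_n)$ in the $2(n-1)$ real unknowns $w_1,\dots,w_{n-1}$.)
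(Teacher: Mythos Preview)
The paper does not prove this theorem; it only states it and refers the reader to \cite[Theorem 2.2]{BELL} and \cite[Theorem 3]{KHA}. So there is no in-paper argument to compare against. Your parenthetical alternative --- following Bell through the Szeg\H{o} and Garabedian kernels --- is precisely the first of the two references the paper cites.

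Your main approach, computing the degree of the evaluation map $\mathrm{ev}:\mathcal P\to\UnitCircle^{\,n}$ as a covering, is a genuinely different and more topological route than either cited proof. The structural outline is sound: the identification of $\mathcal P$ with solutions of the harmonic-measure system times $\UnitCircle$ is classical, the compactness argument is correct, and once $\mathrm{ev}$ is known to be a degree-one covering the conclusion is immediate. What this buys over the kernel-function proof is a clean conceptual reason \emph{why} the count is exactly one; the price is several analytic verifications that are not lighter than the cited arguments.

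The gaps you flag are real, and two deserve emphasis. First, the local-diffeomorphism claim needs care at divisors with repeated zeros. While $\mathrm{Sym}^{n-1}(X)$ is a smooth complex manifold even along the diagonal, your nondegeneracy argument via zeros of holomorphic differentials on the double is phrased for simple divisors; at a collision you must either show such divisors do not lie on the constraint locus, or redo the rank computation in symmetric-function coordinates. Without this, $\mathrm{ev}$ is only known to be a local diffeomorphism on an open dense set, which suffices to define a degree but not to conclude that \emph{every} fibre --- in particular $\mathrm{ev}^{-1}(1,\dots,1)$ --- has exactly $d$ points. Second, the degeneration argument for $d=1$ is the heart of the matter and is only sketched; turning $(\mathcal P_{X_t},\mathrm{ev}_{X_t})$ into a continuous family of coverings through the small-disk limit is essentially a continuity method of the same flavour as Grunsky's original proof, and carries comparable technical weight. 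Your proposal is therefore correct in spirit, but completing it would be work on the order of the proofs the paper cites rather than a shortcut around them.
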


\subsection{Rational Ahlfors functions}

Let $X$ be a non-degenerate $n$-connected domain containing $\infty$ and let $f: X \rightarrow \UnitDisk$ be the Ahlfors function on $X$. This means that
$$
\Re h'(\infty) \leq \Re f'(\infty)
$$
for every holomorphic map $h : X \to \UnitDisk$ satisfying $h(\infty)=0$. By Ahlfors' theorem, $f$ is proper of degree $n$. In virtue of Corollary \ref{normalizedBellrep}, $f$ admits a normalized factorization $f=R \circ g$, where $R$ is an $n$-good map and $g$ is a biholomorphism normalized at infinity.

Then $R$ is the Ahlfors function on $g(X)=R^{-1}(\UnitDisk)$. Indeed, for any holomorphic function $h : R^{-1}(\UnitDisk) \to \UnitDisk$ with $h(\infty)=0$, the composition $h \circ g : X \to \UnitDisk$ is holomorphic and vanishes at infinity, so that
$$
\Re h'(\infty) = \Re (h\circ g)'(\infty) \leq  \Re f'(\infty) = \Re (R \circ g)'(\infty) = \Re R'(\infty).
$$

\begin{definition}
A \textit{rational Ahlfors function} is an $n$-good rational map $R$ which is the Ahlfors function on $R^{-1}(\UnitDisk)$.
\end{definition}

The above change of variable can be done in more generality. Let $f : X \to \UnitDisk$ be the Ahlfors function on any domain $X$ containing $\infty$, and let $g : Y \to X$ be a biholomorphism with $g(\infty)=\infty$ and $\lim_{z \to \infty} g(z)/z = a$. Then the Ahlfors function on $Y$ is given by
$$
z \mapsto (a/|a|)f \circ g(z).
$$
We refer to this as the \textit{transformation law} for Ahlfors functions. A particular instance of this law which is relevant to us is the following.

\begin{observation} \label{affine}
If $R$ is a rational Ahlfors function, then so is
$$
z \mapsto (a/|a|)R(az+b)
$$
for every $a \in \ComplexPlane \setminus \{ 0 \}$ and every $b \in \ComplexPlane$.
\end{observation}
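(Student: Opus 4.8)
The plan is to read off this statement from the transformation law for Ahlfors functions recorded just above, applied to the affine change of coordinates $\phi(z) := az+b$. Write $X := R^{-1}(\UnitDisk)$; by hypothesis $R$ is $n$-good and is the Ahlfors function on $X$, and in particular $\infty \in X$ since $R(\infty)=0$. Put $S(z) := (a/|a|)R(az+b)$ and $Y := S^{-1}(\UnitDisk)$.

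The first step is to check that $S$ is again $n$-good. It is a rational map of degree $n$, being $R$ precomposed with the degree-one map $z \mapsto az+b$ and postcomposed with multiplication by the unimodular constant $a/|a|$; and $S(\infty) = (a/|a|)R(\infty) = 0$. As for $Y$, since $\phi$ is a M\"obius transformation we have $Y = \phi^{-1}(X)$, and $\phi$ carries the $n$ disjoint analytic Jordan curves bounding $X$ (condition (3) of Lemma \ref{nconnected}) to $n$ disjoint analytic Jordan curves bounding $Y$. Hence $S$ satisfies condition (3) of Lemma \ref{nconnected} and is $n$-good; in particular $\infty \in Y$.

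The second step is to identify the Ahlfors function on $Y$. The map $\phi : Y \to X$ is a biholomorphism with $\phi(\infty) = \infty$ and $\lim_{z\to\infty}\phi(z)/z = a \neq 0$, so the transformation law applied with $f = R$ gives that the Ahlfors function on $Y$ is $z \mapsto (a/|a|)(R\circ\phi)(z) = S(z)$. Thus $S$ is an $n$-good rational map equal to the Ahlfors function on $S^{-1}(\UnitDisk)$, i.e.\ a rational Ahlfors function, as desired.

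Granting the transformation law, there is no real obstacle here; the only points requiring (routine) care are verifying the hypotheses of the transformation law for $\phi$ and checking that $S$ retains the defining properties of an $n$-good map, both of which follow from the conformal invariance of the class of domains bounded by finitely many disjoint analytic Jordan curves.
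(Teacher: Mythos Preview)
Your proof is correct and follows exactly the route the paper intends: the paper states Observation~\ref{affine} without proof, simply presenting it as a particular instance of the transformation law for Ahlfors functions applied to the affine map $\phi(z)=az+b$. Your write-up fills in the (routine) details---verifying that $S$ is again $n$-good via Lemma~\ref{nconnected} and that the hypotheses of the transformation law are met---which is precisely what the paper leaves implicit.
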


We give examples of rational Ahlfors functions in the next two sections.

\section{Reflection symmetry}
\label{sec2}

In this first family of examples, the domain $R^{-1}(\UnitDisk)$ admits a reflection symmetry which allows us to show that the Ahlfors function $f$ on $R^{-1}(\UnitDisk)$ takes the value $1$ where $R$ does. The uniqueness part of Theorem \ref{normalizedBieberbach} then allows us to conclude that $f=R$, so that $R$ is a rational Ahlfors function.

\begin{theorem}
\label{theo real}
Let
$$R(z):=\sum_{j=1}^{n} \frac{a_j}{z-p_j},$$
where the poles $p_1, \dots, p_n$ are distinct and real, and the residues $a_1, \dots, a_n$ are positive. If $R$ is $n$-good, then $R$ is a rational Ahlfors function.

\end{theorem}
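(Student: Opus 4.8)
The plan is to exploit the reflection symmetry of $X:=R^{-1}(\UnitDisk)$ together with the uniqueness part of Theorem~\ref{normalizedBieberbach}. First I would record the geometry. Since the poles and residues are real, $R$ commutes with conjugation $\sigma(z)=\bar z$, so $X$ is $\sigma$-invariant; by Lemma~\ref{nconnected}, $\partial X$ has $n$ components $E_1,\dots,E_n$, each bounding a Jordan disk $K_j$ containing exactly one pole $p_j$, and since $p_j\in\Reals$ each $K_j$, hence each $E_j$, is $\sigma$-invariant. On $\Reals$, $R'(x)=-\sum_j a_j/(x-p_j)^2<0$ off the poles, so $R$ is strictly decreasing on each of the $n+1$ intervals between consecutive poles and has no critical point on $\widehat{\Reals}$; a short sign analysis then shows that $R$ attains $1$ at exactly $n$ real points $\alpha_1,\dots,\alpha_n$, one on each $E_j$, and $-1$ at exactly $n$ real points $\gamma_1,\dots,\gamma_n$, again one per $E_j$, with $E_j\cap\widehat{\Reals}=\{\gamma_j,\alpha_j\}$; moreover $X\cap\widehat{\Reals}$ is a union of $n$ disjoint open arcs $A_0,\dots,A_{n-1}$, each joining an $\alpha$-point to a $\gamma$-point, exactly one of them (say $A_0$) through $\infty$. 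Being $n$-good, $R$ is proper of degree $n$, $R(\infty)=0$, and $R(\alpha_j)=1$ for all $j$, so by Theorem~\ref{normalizedBieberbach} it is the \emph{unique} proper degree-$n$ map $X\to\UnitDisk$ vanishing at $\infty$ and sending each $\alpha_j$ to $1$. Hence it suffices to prove that the Ahlfors function $f$ of $X$ also satisfies $f(\alpha_j)=1$ for all $j$; then $f=R$.

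Next I would observe that $f$ inherits the symmetry. Since $\Re f'(\infty)$ is maximal and positive (e.g. $\Re R'(\infty)=\sum a_j>0$), a rotation argument gives $f'(\infty)>0$; then $\overline{f(\bar z)}$ is again admissible with the same derivative at $\infty$, so by uniqueness of the Ahlfors function $f(\bar z)=\overline{f(z)}$. Thus $f$ is real on $X\cap\widehat{\Reals}$, its analytic continuation across $\partial X$ (Schwarz reflection, as in the proof of Theorem~\ref{Bellrep}) takes values in $\{1,-1\}$ at each $\alpha_j,\gamma_j$, and since $f|_{E_j}$ is a homeomorphism onto $\partial\UnitDisk$ we get $\{f(\alpha_j),f(\gamma_j)\}=\{1,-1\}$. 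Everything then comes down to excluding $f(\alpha_j)=-1$.

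For that I would study the nodal set of $u:=\Im f$. It vanishes on the $n$ arcs $A_k$, which cut the $n$-holed sphere $X$ (Euler characteristic $2-n$) into two disks $D^{+},D^{-}$, interchanged by $\sigma$. For $R$ itself the analogue is clean: $\{z\in X:\Im R(z)=0\}$ is \emph{exactly} $\bigcup_k A_k$, because $R$ has no real critical point, so $R|_{A_k}$ is a homeomorphism onto $(-1,1)$ for each $k$, every value in $(-1,1)$ already has one preimage on each $A_k$ and these exhaust the $n$ preimages in $X$. Hence $\Im R$ has a constant sign on each $D^{\pm}$, determined via $R(z)\sim(\sum a_j)/z$ near $\infty$, and this pins down which sub-arc $E_j^{\pm}$ of $E_j$ lies on the boundary of which $D^{\pm}$ — a fact about $X$ alone. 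If one knows the same for $f$ — namely $f$ has no critical point on $X\cap\widehat{\Reals}$, equivalently $\{z\in X:\Im f(z)=0\}=\bigcup_k A_k$ — then $\Im f$ has on each $D^{\pm}$ the sign forced by $f'(\infty)>0$, i.e. the same sign as $\Im R$; computing the sign of $\Im f$ along $E_j^{\pm}$ (which equals $\mp$ the quantity recording whether $f(\alpha_j)$ is $1$ or $-1$) then yields $f(\alpha_j)=1$ for every $j$, whence $f=R$ by the uniqueness above, so $R$ is a rational Ahlfors function.

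The hard part will be the bracketed claim: that the Ahlfors function has no critical point on $X\cap\widehat{\Reals}$ (equivalently, that the nodal set of $\Im f$ inside $X$ is just the $n$ real arcs). An extra nodal set would be a graph in the disk $D^{+}$ growing out of such critical points; a nodal loop in a disk would force $\Im f\equiv 0$, so this graph is a forest, and the task is to show it is empty. I would attack this either by a valence/Euler-characteristic count on $D^{+}$ played against the total of $2n-2$ critical points of $f$, or — perhaps more robustly — by a degeneration: replacing the residues $a_j$ by $ta_j$ leaves $R$ $n$-good for all $t\in(0,1]$ (the critical points are unchanged and the critical values only shrink), the sign of $f_t(\alpha_j(t))\in\{1,-1\}$ is locally constant in $t$, and as $t\to 0$ the domain degenerates to the complement of $n$ small near-disks, for which the Ahlfors function and the non-realness of its $2n-2$ critical points should be controllable by a normal-families argument, giving $f_t(\alpha_j)=1$ for small $t$ and hence at $t=1$.
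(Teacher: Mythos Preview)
Your overall strategy coincides with the paper's: exploit the reflection symmetry, locate the real boundary points where $R=\pm 1$, show that the Ahlfors function $f$ hits $1$ at the same points, and conclude $f=R$ via the Bieberbach--Grunsky uniqueness (Theorem~\ref{normalizedBieberbach}). The setup, the symmetry $f(\bar z)=\overline{f(z)}$, and the reduction to ``$f(\alpha_j)=1$ for all $j$'' are all exactly as in the paper.

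The gap is precisely where you flag it. You reduce everything to the claim that $f$ has no critical point on $X\cap\widehat{\Reals}$ (equivalently, that the nodal set of $\Im f$ in $X$ is just the $n$ real arcs), but you do not prove it: the Euler-characteristic count is only gestured at, and the degeneration argument is left at the level of ``should be controllable by a normal-families argument.'' Neither sketch is close to complete, and the Euler-count approach in particular would need more input than just the total number $2n-2$ of critical points of $f$ to rule out a symmetric pair of real critical points.

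The paper sidesteps this entirely with one extra idea you are missing. Since $X$ is symmetric about $\Reals$ and bounded by $n$ analytic Jordan curves, $X\cap\mathbb{H}$ is a Jordan domain; a Riemann map $\psi:X\cap\mathbb{H}\to\mathbb{H}$ with $\psi(\infty)=\infty$, extended by Schwarz reflection, yields a biholomorphism $\varphi:X\to Y$ where $Y$ is the complement in $\widehat{\ComplexPlane}$ of $n$ disjoint closed real intervals, with $\varphi$ strictly increasing on $X\cap\Reals$. On such a slit domain the Ahlfors function $g$ is explicit (Pommerenke's length formula, Lemma~\ref{lem2}): $g$ is real and has strictly negative derivative on $Y\cap\Reals$. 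The transformation law gives $f=g\circ\varphi$, so $f$ is real and strictly decreasing on $X\cap\Reals$, which immediately forces $f(\alpha_j)=-1$ and $f(\beta_j)=1$ with no nodal-set or degeneration analysis at all. In other words, the ``hard part'' you isolate is dissolved by passing to a model domain where the Ahlfors function is known in closed form.
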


We begin with two lemmas.

\begin{lemma}
\label{lem1}
Let $R$ be as in Theorem \ref{theo real}. Then $R$ is real-valued only on the real axis, which $\partial R^{-1}(\mathbb{D})$ intersects in $2n$ points $\alpha_1<\beta_1<\dots<\alpha_n<\beta_n$ such that $R(\alpha_j)=-1$ and $R(\beta_j)=1$ for each $j$.
\end{lemma}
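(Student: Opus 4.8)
The plan is to exploit that $R$ has real coefficients, so that $R$ preserves the real axis and is in fact strictly monotone along it (each summand $a_j/(z-p_j)$ being, up to sign, a Herglotz function). First I would compute, for $z=x+iy$ with $y\neq 0$,
$$
\operatorname{Im} R(z)=-y\sum_{j=1}^{n}\frac{a_j}{|z-p_j|^2},
$$
which is nonzero because every $a_j$ is positive. Since $R$ is real-valued on $\Reals\setminus\{p_1,\dots,p_n\}$ (real coefficients) and $R(\infty)=0$, this already gives the first assertion: $R$ takes real values exactly on the real axis. As a byproduct, $R$ maps the open upper half-plane into the open lower half-plane, and vice versa.

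Next I would order the poles $p_1<\dots<p_n$ and examine $R$ restricted to the $n+1$ intervals $I_0:=(-\infty,p_1)$, $I_k:=(p_k,p_{k+1})$ for $1\leq k\leq n-1$, and $I_n:=(p_n,+\infty)$. On each of these, $R'(x)=-\sum_{j}a_j/(x-p_j)^2<0$, so $R$ is a strictly decreasing homeomorphism of the interval onto its image. Reading off the limits at the endpoints --- $R(x)\to 0^-$ as $x\to-\infty$, $R(x)\to 0^+$ as $x\to+\infty$, and $R(x)\to+\infty$ (resp.\ $-\infty$) as $x$ approaches a pole from the right (resp.\ left), where the nearby term dominates --- I obtain that $R$ maps $I_0$ bijectively onto $(-\infty,0)$, each intermediate $I_k$ bijectively onto $\Reals$, and $I_n$ bijectively onto $(0,+\infty)$.

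Consequently the equation $R(x)=-1$ has exactly one root $\alpha_j\in I_{j-1}$ for each $j=1,\dots,n$ (and none in $I_n$), and $R(x)=1$ has exactly one root $\beta_j\in I_j$ for each $j=1,\dots,n$ (and none in $I_0$). On each shared interval $I_j$ with $1\leq j\leq n-1$, strict monotonicity and $1>-1$ force $\beta_j<\alpha_{j+1}$; combining this with the ordering $I_0<\{p_1\}<I_1<\{p_2\}<\dots<\{p_n\}<I_n$ of the intervals yields
$$
\alpha_1<p_1<\beta_1<\alpha_2<p_2<\beta_2<\dots<\alpha_n<p_n<\beta_n,
$$
so in particular $\alpha_1<\beta_1<\dots<\alpha_n<\beta_n$ are $2n$ distinct points, and they are precisely the real points where $|R|=1$. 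Finally, to see that this set equals $\partial R^{-1}(\UnitDisk)\cap\Reals$, I would note that for any nonconstant rational map the open mapping theorem gives $\overline{R^{-1}(\UnitDisk)}=R^{-1}(\overline{\UnitDisk})$, whence $\partial R^{-1}(\UnitDisk)=R^{-1}(\partial\UnitDisk)$; intersecting with $\Reals$ finishes the argument.

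There is no substantial obstacle here; the one point deserving care is ensuring the number of boundary intersections is exactly $2n$ and not fewer, which is why the strict monotonicity on each interval (so that every prescribed value is attained, and attained only once) together with the precise endpoint and pole limits are essential, as is the identification of $\partial R^{-1}(\UnitDisk)$ with the full preimage $R^{-1}(\partial\UnitDisk)$.
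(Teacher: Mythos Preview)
Your proof is correct and follows essentially the same approach as the paper: order the poles, use $R'(x)=-\sum_j a_j/(x-p_j)^2<0$ to get strict monotonicity on each real interval between poles, read off the endpoint limits, and conclude that the $2n$ real solutions of $|R|=1$ alternate between $\alpha_j$ (value $-1$) and $\beta_j$ (value $1$). The only minor difference is that you prove ``$R$ is real-valued only on $\Reals$'' by directly computing $\operatorname{Im}R(x+iy)=-y\sum_j a_j/|z-p_j|^2$, whereas the paper counts that every real value already has $n$ real preimages and invokes the degree of $R$; both are equally elementary.
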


\begin{proof}
Suppose that the poles are ordered such that $p_1<  \dots <p_n$.

Since $a_1,\dots,a_n,p_1,\dots,p_n \in \mathbb{R}$, we have $R(\Reals \cup \{\infty\}) \subset \Reals \cup\{ \infty \}$. One computes that
$$R'(z) = -\sum_{j=1}^n \frac{a_j}{(z-p_j)^2},$$
which is strictly negative on $\mathbb{R} \setminus \{ p_1,...,p_n \}$.

Thus, $R$ decreases from $0$ to $-\infty$ on $(-\infty,p_1)$, then decreases from $+\infty$ to $-\infty$ on $(p_j,p_{j+1})$ for $j=1,...,n-1$, and finally decreases from $+\infty$ to $0$ on $(p_n,+\infty)$. In particular, $R$ takes every real value exactly $n$ times on $\Reals$. Since $R$ has degree $n$, we have $R^{-1}(\Reals \cup \{\infty\}) = \Reals \cup\{ \infty \}$.

Since $\partial R^{-1}(\UnitDisk) = R^{-1}(\partial \UnitDisk)$, we have $(\partial R^{-1}(\UnitDisk)) \cap \Reals = R^{-1}(\{-1,1\})$. Because $R$ is decreasing on $\Reals \setminus \{ p_1,...,p_n \}$, the inverse images of $-1$ and $1$ intertwine as claimed.
\end{proof}

\begin{lemma}
\label{lem2}
Let $E$ be a finite union of disjoint closed intervals in the real line, say
$$E:= \cup_{j=1}^{n} [c_j,d_j],$$
where $c_j < d_j$ for each $j$. Then the Ahlfors function on $\RiemannSphere \setminus E$ is given by
$$z \mapsto \frac{\prod_{j=1}^{n}\left(\frac{z-c_j}{z-d_j} \right)^{1/2}-1}{\prod_{j=1}^{n}\left(\frac{z-c_j}{z-d_j} \right)^{1/2}+1}.$$
In particular, it is real-valued and has strictly negative derivative on $\Reals \setminus E$.
\end{lemma}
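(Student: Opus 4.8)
The plan is to show the displayed function is a proper holomorphic map of degree $n$ from $X := \RiemannSphere \setminus E$ into $\UnitDisk$ vanishing at $\infty$, and then to single it out as the extremal one using the invariance of $X$ under complex conjugation. Let $H := \{\, w \in \ComplexPlane : \Re w > 0 \,\}$, write $\psi(z) := \prod_{j=1}^n (z-c_j)/(z-d_j)$, and let $\phi$ denote the product in the statement; since $(z-c_j)/(z-d_j)$ has winding number $0$ around each component of $E$, each factor $\big((z-c_j)/(z-d_j)\big)^{1/2}$ is single-valued on $X$, so $\phi$ is a well-defined holomorphic function on $X$ with $\phi^2 = \psi$ and $\phi(\infty)=1$, and the displayed function is $F := (\phi-1)/(\phi+1)$.

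First I would check that $\phi$ maps $X$ properly of degree $n$ onto $H$. The rational function $\psi$ has degree $n$ and is real; on $\Reals \setminus E$ every factor is positive, so $\psi > 0$ there, and $\psi'/\psi = \sum_j (c_j-d_j)/\big((z-c_j)(z-d_j)\big) < 0$ on $\Reals \setminus E$, so $\psi$ strictly decreases on each of the $n$ components of $(\Reals \cup \{\infty\}) \setminus E$, running from $+\infty$ at each right endpoint $d_j$ down to $0$ at each left endpoint $c_j$, and equal to $1$ at $\infty$. Hence $\psi$ assumes every value of $(0,+\infty)$ exactly $n$ times on $\Reals \cup \{\infty\}$, which exhausts its preimages; a winding count on the circle $\Reals \cup \{\infty\}$ (each of the $2n$ complementary and boundary arcs of $E$ maps onto one half of the target circle, consistently, for total degree $\pm n$) then shows $\psi^{-1}(\Reals \cup \{\infty\}) = \Reals \cup \{\infty\}$, and since $\psi(E) \subset (-\infty,0]\cup\{\infty\}$ we conclude $\psi^{-1}\big(\ComplexPlane \setminus (-\infty,0]\big) = X$. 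Thus $\psi : X \to \ComplexPlane \setminus (-\infty,0]$ is proper of degree $n$; composing with the principal square root (which carries $\ComplexPlane \setminus (-\infty,0]$ onto $H$ and is positive on $(0,\infty)$, matching $\phi > 0$ on $\Reals \setminus E$) shows $\phi : X \to H$ is proper of degree $n$. Consequently $F : X \to \UnitDisk$ is proper of degree $n$ with $F(\infty)=0$; since $\psi$ is decreasing on $\Reals \setminus E$, so are $\phi$ and $F$, which gives the ``in particular'' clause; and from $\phi(z) = 1 + \tfrac{1}{2z}\sum_j (d_j-c_j) + O(z^{-2})$ near $\infty$ one reads off $F'(\infty) = \tfrac14 \sum_j (d_j-c_j) > 0$, so $F$ is a bona fide competitor in the extremal problem defining the Ahlfors function.

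Next I would prove that $F$ equals the Ahlfors function $f$ of $X$, which by Theorem~\ref{theo Ahlfors} (as $X$ is a non-degenerate $n$-connected domain containing $\infty$) is proper of degree $n$. Since $X$ is invariant under $z \mapsto \bar z$, the map $z \mapsto \overline{f(\bar z)}$ is again holomorphic from $X$ into $\UnitDisk$, vanishes at $\infty$, and has the same (real) derivative at $\infty$; by uniqueness of the Ahlfors function $\overline{f(\bar z)} = f(z)$, so $f$ is real on $\Reals \setminus E$ with values in $(-1,1)$. Put $\Phi := (1+f)/(1-f)$, which maps $X$ properly of degree $n$ onto $H$, with $\Phi(\infty)=1$, $\Phi > 0$ on $\Reals \setminus E$, and $\Phi \in i\Reals \cup \{\infty\}$ along $\partial X$ (because $|f|=1$ there). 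Then $\Phi^2$ has boundary values in $(-\infty,0]\cup\{\infty\}$ from each side of each segment of $E$, and these agree since $\Phi$ is conjugation-symmetric; by Morera's theorem (an analytic arc being removable for a continuous function holomorphic off it), $\Phi^2$ extends to a meromorphic---hence rational---function $R$ on $\RiemannSphere$, the finiteness of its pole set being guaranteed by the fact that $f$, being proper of degree $n$, takes the value $1$ at only finitely many boundary points. This $R$ is real, has $R(\infty)=1$, has degree $n$ (a point off $(-\infty,0]\cup\{\infty\}$ has its $n$ $\Phi^2$-preimages in $X$), and satisfies $R^{-1}\big((-\infty,0]\cup\{\infty\}\big) = E$, because $\Phi^2$ omits $(-\infty,0]$ on $X$ while $\Phi^2(E) \subset (-\infty,0]\cup\{\infty\}$.

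The heart of the matter is then to identify $R$. I claim $R$ has no critical value in $(-\infty,0]\cup\{\infty\}$: such a critical point would lie in $R^{-1}\big((-\infty,0]\cup\{\infty\}\big) = E \subset \Reals$, and if its image were in $(-\infty,0)$---or if it were a multiple zero or multiple pole of $R$---then one of the (at least two) local branches of $R^{-1}$ along a short subsegment of $(-\infty,0)$ near that image would be an arc transverse to $\Reals$, hence would place points of $X$ in $R^{-1}\big((-\infty,0)\big)$, contradicting $R(X) \subset \ComplexPlane \setminus (-\infty,0]$. Therefore $R$ restricts to an unbranched $n$-fold covering of the arc $(-\infty,0]\cup\{\infty\}$ by the $n$ disjoint arcs constituting $E$, sending endpoints to $\{0,\infty\}$; being such a covering it carries each $[c_j,d_j]$ homeomorphically onto the arc, so it has exactly one simple zero and one simple pole there, at $c_j$ and $d_j$. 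With $R(\infty)=1$ this forces $R = \prod_j \big((z-c_j)/(z-d_j)\big)^{\eta_j}$ for signs $\eta_j \in \{\pm 1\}$, whence $\Phi = \prod_j \big((z-c_j)/(z-d_j)\big)^{\eta_j/2}$, $f = (\Phi-1)/(\Phi+1)$, and expanding at $\infty$ gives $f'(\infty) = \tfrac14 \sum_j \eta_j(d_j-c_j) \leq \tfrac14 \sum_j (d_j-c_j) = F'(\infty)$, with equality exactly when every $\eta_j = 1$. Since $f$ maximizes the derivative at $\infty$ and $F$ is a competitor, equality must hold, so $f = F$. The main obstacle is this last step---ruling out that a single component of $E$ carries several zero--pole pairs of $R$---while a subsidiary point needing care is the winding count in the second paragraph.
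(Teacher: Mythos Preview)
Your argument is correct, and it takes a genuinely different route from the paper's. The paper's proof is a two-liner once one quotes Pommerenke's theorem that $\gamma(E)=\tfrac14|E|$ for $E\subset\Reals$: it writes down $g(z)=\tfrac12\int_E (z-t)^{-1}\,dt$, checks that $|\mathrm{Im}\,g|<\pi/2$ so that $f=(e^g-1)/(e^g+1)$ maps $X$ into $\UnitDisk$, computes $f'(\infty)=\tfrac14|E|$, and concludes $f$ is extremal. The monotonicity on $\Reals\setminus E$ is read off from $g'<0$.

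Your approach avoids Pommerenke entirely, trading it for Ahlfors' theorem (already available as Theorem~\ref{theo Ahlfors}) plus a structural argument: you show the candidate $F$ is a genuine competitor, then reverse-engineer the actual Ahlfors function $f$ by proving that $\big((1+f)/(1-f)\big)^2$ extends to a degree-$n$ real rational function whose zero--pole locus is forced to be the set of endpoints of $E$, and finally use extremality to fix the signs $\eta_j$. Both obstacles you flag are real but surmountable. For the winding count, the point is that on each of the $2n$ arcs the image of $\psi$ stays in a fixed half-circle and joins $0$ to $\infty$, so each arc contributes a half-turn in a consistent direction regardless of monotonicity on $E$; since the topological degree on $\Reals\cup\{\infty\}$ is then $\pm n$ and the global degree is $n$, $\psi^{-1}(\Reals\cup\{\infty\})=\Reals\cup\{\infty\}$ follows. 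For the zero--pole distribution, note first that each endpoint $c_j,d_j$ must be a zero or pole of $R$ (since $R>0$ just outside $E$ and $R\le 0$ or $R=\infty$ just inside), and there are $2n$ endpoints versus $2n$ zeros-plus-poles, so all are simple; your critical-point argument then excludes both endpoints of an interval being zeros (else $R$ would have an interior minimum there). What your approach buys is self-containment relative to the paper's toolkit; what the paper's buys is brevity, at the cost of importing Pommerenke's theorem.
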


\begin{proof}
By a theorem of Pommerenke (see e.g. \cite[Chapter 1, Theorem 6.2]{GAR}), the derivative at infinity of the Ahlfors function on $\RiemannSphere \setminus E$ is equal to a quarter of the length of $E$. This allows us to construct the Ahlfors function explicitly.

On $\ComplexPlane \setminus E$, define
$$
g(z):=\frac{1}{2} \int_{E} \frac{dt}{z-t}
$$
and
$$
f(z):=\frac{e^{g(z)}-1}{e^{g(z)}+1}.
$$

An easy calculation shows that $|\operatorname{Im}g(z)|<\pi/2$ for $z \notin E$, and thus $f$ maps $\ComplexPlane \setminus E$ into the unit disk $\mathbb{D}$. Furthermore, we have $g(z),f(z) \to 0$ as $z\to \infty$ and
$$f'(\infty) = \frac{1}{2}g'(\infty)=\frac{1}{4}\int_E dt,$$ so that $f$ is the Ahlfors function on $\RiemannSphere \setminus E$ by Pommerenke's result.

A simple calculation yields
$$f(z)=\frac{\prod_{j=1}^{n}\left(\frac{z-c_j}{z-d_j} \right)^{1/2}-1}{\prod_{j=1}^{n}\left(\frac{z-c_j}{z-d_j} \right)^{1/2}+1},$$
where each square root is chosen to have image inside the right half-plane.

Clearly, $g$ and $f$ are real-valued on $\Reals \setminus E$. We have
$$
g'(z)= -\frac{1}{2} \int_E \frac{dt}{(z-t)^2},
$$
which is strictly negative on $\Reals \setminus E$. As the exponential and $z \mapsto (z-1)/(z+1)$ have strictly positive derivative on $\Reals$ and $\Reals_+$ respectively, we get that $f'<0$ on $\Reals \setminus E$.
\end{proof}

We can now proceed with the proof of Theorem \ref{theo real} :

\begin{proof}
Let $f$ be the Ahlfors function on the non-degenerate $n$-connected domain $X:=R^{-1}(\UnitDisk)$. By Ahlfors' theorem, $f:X \to \UnitDisk$ is a proper holomorphic map of degree $n$, which extends continuously to $\partial X$ by the Schwarz reflection principle. Let $\alpha_1<\beta_1<\dots<\alpha_n<\beta_n$ be the intersection points of $\partial X$ with $\Reals$. We will prove that $f(\beta_j)=1$ for each $j$.

First note that $X=R^{-1}(\UnitDisk)$ is symmetric with respect to the real axis since $\UnitDisk$ is and $\overline{R(\bar z)}= R(z)$. Moreover, since $R$ is $n$-good, each component of $\partial X$ is a Jordan curve symmetric about the real axis. It follows that $X \cap \mathbb{H}$ is a Jordan domain, where $\mathbb{H}$ is the upper half-plane. Let $\psi : \overline{X \cap \mathbb{H}} \to \overline{\mathbb{H}}$ be a homeomorphism conformal on $X \cap \mathbb{H}$ with $\psi(\infty)=\infty$. We can extend $\psi$ to all of $X$ by the Schwarz reflection principle. The resulting map $\varphi$ is a biholomorphism of $X$ onto the complement $Y$ in $\RiemannSphere$ of $n$ disjoint closed intervals in the real line. Since $\psi$ preserves the orientation on the boundary, we see that $\varphi$ is strictly increasing on $X\cap \Reals$, and in particular $\lim_{z\to \infty}\varphi(z)/z$ is positive.

If $g$ denotes the Ahlfors function on $Y$, then $f=g \circ \varphi$ by the transformation law. By Lemma \ref{lem2}, $g$ is real-valued and strictly decreasing on $Y\cap \Reals$, and thus $f$ is real-valued and strictly decreasing on $X\cap \Reals$. Since $f(\partial X \cap \Reals) \subset \partial \mathbb{D} \cap \Reals = \{ -1, 1 \}$, we must have $f(\alpha_j)=-1$ and $f(\beta_j)=1$ for each $j\in \{1, ..., n \}$.

Now, $R : X \to \UnitDisk$ is also a proper holomorphic map of degree $n$ and we have $f(\infty) = R(\infty) = 0$ and $f(\beta_j)=R(\beta_j)=1$ for each $j$. By Theorem \ref{normalizedBieberbach}, $f=R$.
\end{proof}

There is a partial converse to Theorem \ref{theo real} :

\begin{theorem}
\label{posres}
Let $R$ be an $n$-good rational map with all real poles and real residues. If $R$ is a rational Ahlfors function, then all of its residues are positive.
\end{theorem}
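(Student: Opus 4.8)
The plan is to combine the reflection argument from the proof of Theorem~\ref{theo real} with the rigid boundary geometry that the reality of the poles imposes on $X := R^{-1}(\UnitDisk)$. The main obstacle will be to show that each component of $R^{-1}(\RiemannSphere \setminus \UnitDisk)$ is \emph{individually} symmetric about $\Reals$; this is the one place where the reality of the poles, as opposed to the mere reflection symmetry of $X$, gets used, and once it is in hand the rest is the standard reflection computation followed by a short analysis near the poles.

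The structural observation I would start from is that, for any $n$-good $R$, the components of $R^{-1}(\RiemannSphere \setminus \UnitDisk)$ are in bijection with the poles of $R$. Indeed, by Lemma~\ref{nconnected}, $R$ maps each component $K$ homeomorphically onto $\RiemannSphere \setminus \UnitDisk$, so $R$ is injective on $K$ and $(R|_K)^{-1}(\infty)$ is a single point of $K$, i.e.\ a pole of $R$; since all $n$ poles lie in $R^{-1}(\RiemannSphere \setminus \UnitDisk)$, each of its $n$ components contains exactly one of them. Now the reality of the poles and residues makes $R$ have real coefficients, hence $X$ is symmetric about $\Reals$, hence $R^{-1}(\RiemannSphere \setminus \UnitDisk)$ is a symmetric set; since each component contains a real pole it meets $\Reals$, and being disjoint from the reflections of the other components it must coincide with its own reflection. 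As in the proof of Theorem~\ref{theo real}, it follows that $X \cap \mathbb{H}$ is a Jordan domain and that $\Reals \cup \{\infty\}$ is partitioned by $\partial X$ into the $n$ closed arcs $K_i \cap \Reals$, each carrying exactly one pole, alternating cyclically with the $n$ open arcs that are the components of $X \cap \Reals$.

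I would then repeat the reflection construction: uniformize $\overline{X \cap \mathbb{H}}$ onto $\overline{\mathbb{H}}$ fixing $\infty$ and reflect across $\Reals$, obtaining a biholomorphism $\varphi : X \to Y$ onto the complement $Y$ of $n$ disjoint closed real intervals, with $\varphi$ strictly increasing on $X \cap \Reals$ (it preserves boundary orientation) and $\lim_{z\to\infty} \varphi(z)/z > 0$. By the transformation law and Lemma~\ref{lem2}, the Ahlfors function on $X$ equals $g \circ \varphi$, where the Ahlfors function $g$ of $Y$ is real and strictly decreasing on $Y \cap \Reals$; hence the Ahlfors function on $X$ is strictly decreasing on each component of $X \cap \Reals$. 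Now the hypothesis enters: since by assumption $R$ is the Ahlfors function on $X$, $R$ itself is strictly decreasing on each component of $X \cap \Reals$.

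The conclusion follows from a boundary case analysis. Each component of $X \cap \Reals$ has its endpoints on $\partial X$, where $R \in \{-1,1\}$, and $R$ decreases along it, so it runs from the value $+1$ to the value $-1$; by the cyclic alternation, each arc $K_i \cap \Reals$ runs from $-1$ at one end to $+1$ at the other. On the interior of $K_i \cap \Reals$ one has $|R| \geq 1$ away from the unique pole $p$ it contains, so $R$ cannot cross $(-1,1)$ and must blow up at $p$; since the boundary value at the $-1$ end is $-1$, continuity forces $R < -1$ on the portion of the arc between that endpoint and $p$, whence $R(x) \to -\infty$ as $x$ approaches $p$ from that side. Writing $R(z) = a/(z-p) + h(z)$ with $h$ holomorphic near $p$, this limit equals $-\infty$ precisely when $a > 0$. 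Since this applies to every pole, all the residues of $R$ are positive.
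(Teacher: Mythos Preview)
Your proof is correct and shares its core with the paper's: both set up the reflection $\varphi:X\to Y$ onto a real slit domain and use the transformation law together with Lemma~\ref{lem2} to transfer information about the Ahlfors function $g$ on $Y$ back to $R$. The endgames differ, however. The paper extracts from Lemma~\ref{lem2} only that $R=g\circ\varphi$ has \emph{no critical points} on $X\cap\Reals$, and then argues by contradiction: since $R'(\infty)=\sum a_j>0$ at least one residue is positive, and if some residue were negative there would be two consecutive poles $p_j<p_{j+1}$ with opposite-sign residues, forcing (by the matching one-sided blow-ups) a critical point of $R$ in $(p_j,p_{j+1})\subset X\cap\Reals$. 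You instead extract the stronger conclusion that $R$ is \emph{strictly decreasing} on each component of $X\cap\Reals$, read off that each such component runs from the value $+1$ to the value $-1$ (treating the component through $\infty$ via $R(\infty)=0$), and then pin down the sign of each residue directly from the one-sided limit $R(x)\to-\infty$ as $x\to p^-$. Your route is slightly more explicit bookkeeping in exchange for avoiding the contradiction; the paper's route is shorter once one sees the Rolle-type step. Either way the reflection construction is doing the real work.
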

\begin{proof}
Let $X := R^{-1}(\UnitDisk)$ and let $p_1< \dots <p_n$ be the poles of $R$. Let us prove first that $R$ does not have any critical point on $X \cap \mathbb{R}$.

The assumptions on the poles and the residues imply that $\overline{R(\bar z)}=R(z)$, so that $X$ is symmetric about the real axis. As in the proof of Theorem \ref{theo real}, there exists a biholomorphism $\varphi$ of $X$ onto the complement $Y$ in $\RiemannSphere$ of a finite number of disjoint closed intervals in the real line, with $\varphi(\infty)=\infty$ and the derivative of $\varphi$ strictly positive on $X \cap \Reals$.

Let $g$ be the Ahlfors function on $Y$. We have $R=g \circ \varphi$ by the transformation law. By Lemma \ref{lem2}, $g$ does not have any critical point on $Y \cap \Reals$, and thus $R$ does not have any critical point on $X \cap \Reals$.

Since $R$ is the Ahlfors function on $X$, its derivative at infinity $R'(\infty)$ is positive. Observe that this derivative is equal to $\lim_{z \to \infty} z R(z) = \sum_{j=1}^n a_j,$
the sum of the residues of $R$, so at least one of them must be positive. Suppose that there is also one negative residue. Then there must be two consecutive poles $p_j$ and $p_{j+1}$ whose corresponding residues have opposite signs. Hence $R(t)$ approaches $+\infty$, or $-\infty$, in the same way as $t$ decreases to $p_j$ or increases to $p_{j+1}$. This implies that $R$ must have a critical point in the interval $(p_j, p_{j+1})$. By Lemma \ref{nconnected}, all the critical points of $R$ are in $X$, so $R$ has a critical point in $X \cap \Reals$, a contradiction. Therefore, all the residues of $R$ are positive.

\end{proof}

We do not know if Theorem \ref{theo real} holds if $R$ is allowed to have conjugate pairs of poles.

\begin{question}
\label{question symdom}
Suppose that an $n$-good rational map $R$ satisfies $R(\overline z) = \overline{R(z)}$ and has all positive residues. Then is $R$ a rational Ahlfors function?
\end{question}

\section{Rotational symmetry}
\label{sec3}

In this second family of examples, the domain $R^{-1}(\UnitDisk)$ admits a rotational symmetry which allows us to show that the Ahlfors function $f$ on $R^{-1}(\UnitDisk)$ has the same zeros as $R$, from which it follows that $f=R$.

\begin{theorem}
\label{rotsym}
Let $n\geq 2$, $0<a < n (n-1)^{(1-n)/n}$, and $R(z)=a z^{n-1} / (z^n - 1)$. Then $R$ is a rational Ahlfors function.
\end{theorem}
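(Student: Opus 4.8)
The plan is to mimic the strategy used for the reflection-symmetric family, but with the cyclic group of rotations $z \mapsto \omega z$ (where $\omega = e^{2\pi i/n}$) playing the role of complex conjugation. First I would check that $R(z) = az^{n-1}/(z^n-1)$ is $n$-good under the stated bound on $a$. Since $R(\infty)=0$ and the poles are the $n$-th roots of unity, all simple with residues $a/n > 0$, it suffices by Lemma \ref{nconnected}(5) to verify that all critical values lie in $\UnitDisk$. A direct computation gives $R'(z) = -a z^{n-2}(z^n + n - 1)/(z^n-1)^2$, so the critical points are $z=0$ (with critical value $0$, when $n \geq 3$) and the $n$-th roots of $-(n-1)$. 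At such a point $z_0$ one has $z_0^n = -(n-1)$, hence $|R(z_0)| = a\,(n-1)^{(n-1)/n} / n$, and the hypothesis $a < n(n-1)^{(1-n)/n}$ says exactly that this is $<1$. So $R$ is $n$-good.

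Next I would exploit the equivariance $R(\omega z) = \omega^{-1} R(z)$, which shows that $X := R^{-1}(\UnitDisk)$ is invariant under the rotation $\rho(z) = \omega z$, and that the quotient $X/\langle \rho \rangle$ is conformally a domain whose boundary has one component. Concretely, the map $w = z^n$ conjugates $\rho$ to the identity and sends $X$ to a domain $X'$ in the $w$-sphere; one checks $X'$ is simply connected (an annulus-like region? — actually $X$ is $n$-connected and $\rho$ permutes its $n$ boundary curves cyclically with $0$ and $\infty$ as the two fixed points interior to $X$, so $X/\rho$ is a disk with the orbifold points at $0,\infty$, and after the degree-$n$ branched cover $w=z^n$ the image is genuinely simply connected). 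Let $f$ be the Ahlfors function on $X$. Since the Ahlfors function is canonical, the rotational symmetry of $X$ forces $f$ to be equivariant up to a unimodular constant: $f(\omega z) = \lambda f(\omega z)$ — more precisely, applying the transformation law to the automorphism $\rho$ of $X$ (which has $\lim_{z\to\infty}\rho(z)/z = \omega$), we get $f(\omega z) = \bar\omega f(z)$. Thus $f$ and $R$ transform the same way under $\rho$, and in particular $f$ is divisible by $z^{n-1}$ in a suitable sense: the zero of $f$ of multiplicity $n$ (it is proper of degree $n$ with $f(\infty)=0$, so $\infty$ is its unique zero of multiplicity $n$) — wait, I should instead track the zero at the origin. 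Since $0 \in X$ and $f(\omega z) = \bar\omega f(z)$, evaluating at a fixed point gives $f(0) = \bar\omega f(0)$, so $f(0)=0$; iterating the functional equation shows $f$ vanishes to order $\equiv n-1 \pmod n$ at $0$, and since $f$ has degree $n$ and already a simple... — cleaner: $f$ has exactly $n$ zeros counted with multiplicity in $\overline X$ (including boundary behavior? no — $f$ is proper of degree $n$ onto $\UnitDisk$ so it has exactly $n$ preimages of $0$ in $X$, with multiplicity). One of them is $\infty$. The functional equation $f(\omega z) = \bar\omega f(z)$ shows the zero set in $X$ is $\rho$-invariant, and the only $\rho$-fixed points are $0$ and $\infty$; so either the remaining $n-1$ zeros form a single free orbit (impossible, that has size $n$) or they all coincide at $0$. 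Hence $f$ has a zero of order $n-1$ at $0$ and a simple zero at $\infty$, exactly like $R$.

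From here I would argue $f = R$ by a Blaschke-product / uniqueness argument. Both $f$ and $R$ are proper degree-$n$ maps $X \to \UnitDisk$ with the same divisor of zeros ($(n-1)\cdot[0] + 1\cdot[\infty]$) and both vanish appropriately at $\infty$ with positive derivative (for $R$, $R'(\infty) = a/n \cdot \text{(something)}$... actually $R'(\infty) = \lim z R(z) = \sum a_j = a > 0$). The ratio $f/R$ is then a holomorphic function on $X$ with no zeros or poles, and $|f/R| = 1$ on $\partial X$ since $|f|=|R|=1$ there; as $X$ is $n$-connected this does not immediately force $f/R$ constant, but combined with the symmetry (the ratio is $\rho$-invariant, hence descends to the simply connected quotient $X'$, where a zero-free holomorphic function with unimodular boundary values is a unimodular constant) we conclude $f/R \equiv c$ with $|c|=1$; evaluating derivatives at $\infty$ and using that both $f'(\infty)$ and $R'(\infty)$ are positive gives $c=1$. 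Alternatively, and perhaps more in the spirit of Theorem \ref{theo real}, one invokes Theorem \ref{normalizedBieberbach}: pick one boundary point $\alpha_j$ on each boundary curve where $R=1$ (these exist and are permuted cyclically by $\rho$, consistently with $f(\omega z)=\bar\omega f(z)$ meaning... hmm, $f(\alpha_j)$ and $R(\alpha_j)$ — I need $f(\alpha_j)=1$). The matching-zeros statement plus the normalization $f(\infty)=0=R(\infty)$ and $f'(\infty), R'(\infty)>0$ should suffice. The main obstacle I anticipate is the descent-to-the-quotient bookkeeping: making precise that the $n$-connected symmetric domain $X$, modulo the free-on-the-boundary $\mathbb{Z}/n$ action, yields a simply connected region on which a zero-free function with unimodular boundary values must be constant, and checking the orbifold points at $0$ and $\infty$ cause no trouble. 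Once that is in hand, equality of $f$ and $R$ — and hence that $R$ is a rational Ahlfors function — follows.
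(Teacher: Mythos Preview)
Your proposal follows essentially the same route as the paper: verify $n$-goodness via the critical-value computation, use the transformation law to get $f(\omega z)=\bar\omega f(z)$, and conclude from the orbit-counting argument that the zero divisor of $f$ is $(n-1)\cdot[0]+[\infty]$, matching that of $R$.

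Where you diverge is the final step, and there you are overcomplicating things. Your worry that ``as $X$ is $n$-connected this does not immediately force $f/R$ constant'' is misplaced: since $f$ and $R$ have identical zero divisors, both $f/R$ and $R/f$ are holomorphic on $X$ and have modulus $1$ on $\partial X$; the maximum principle applied to each gives $|f/R|\le 1$ and $|R/f|\le 1$ on $X$, hence $|f/R|\equiv 1$, hence $f/R$ is a unimodular constant. Connectivity plays no role. This is exactly the paper's argument, and it spares you the descent to the quotient (which, incidentally, is not as clean as you suggest, since $z\mapsto z^n$ need not map $X$ injectively onto a simply connected region --- the images of the $n$ complementary disks can overlap). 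Your alternative via Theorem~\ref{normalizedBieberbach} would require identifying boundary points where $f=1$, which the rotational symmetry alone does not readily provide; the zero-divisor argument is the right one here.
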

\begin{proof}
We have
$$
R'(z) = -a z^{n-2} \frac{z^{n}+(n-1)}{(z^n-1)^2}.
$$
Let $q= e^{i\pi / n}$ and $\omega = q^2$.
The critical points of $R$ are $$(n-1)^{1/n}q,\, \omega(n-1)^{1/n}q,\, \ldots,\, \omega^{n-1}(n-1)^{1/n}q$$ and $0$ with multiplicity $(n-2)$. We have $R(0)=0$ and $$|R(\omega^k(n-1)^{1/n}q)|=a (n-1)^{(n-1)/n} / n < 1,$$
so by Lemma \ref{nconnected}, $R^{-1}(\UnitDisk)$ is connected and bounded by $n$ disjoint analytic Jordan curves.

Let $X := R^{-1}(\UnitDisk)$ and let $f$ be the Ahlfors function on $X$. By Ahlfors' theorem, $f$ is a proper holomorphic map of degree $n$ and extends continuously to $\partial{X}$.

We want to show that $f$ has the same zeros as $R$, that is, a simple zero at infinity and a zero of multiplicity $(n-1)$ at the origin.

We have that $\omega R (\omega z) = R(z)$ and hence $\omega X = X$.  This implies that $$\omega f(\omega z) = f(z),$$ since the left-hand side maps $X$ holomorphically into $\UnitDisk$ and has the same derivative as $f$ at infinity. Suppose that $f$ vanishes at a point $z$ in $X$ different from $0$ or $\infty$. Then $f$ vanishes at $z,\omega z, \ldots, \omega^{n-1} z$ and $\infty$, a total of $n+1$ points, which is impossible since it has degree $n$. Thus $f$ vanishes only at $0$ and $\infty$. Since $f'(\infty)>0$, the zero at infinity is simple and hence the zero at the origin has multiplicity $(n-1)$.

Therefore, $f/R$ and $R/f$ are holomorphic in $X$. Since $|f/R|=1$ on all $\partial X$, the maximum principle applied to both quotients implies that $|f/R|=1$ on $X$. In particular, $f/R$ does not have open image and is thus a unimodular constant. Since $f'(\infty)$ and $R'(\infty)=a$ are both positive, $f=R$.
\end{proof}

\begin{remark}
The map $R(z)=a z^{n-1}/(z^n - 1)$ can alternatively be written as
$$
R(z)=\frac{a}{n}\sum_{j= 1}^n \frac{1}{z-\omega^j},
$$
where $\omega = e^{2\pi i / n}$, so that $R$ has the same positive residue $a/n$ at each of its poles.
\end{remark}

\begin{remark}
For $n=2$, the above shows that $R(z)= a z / (z^2-1) = \frac{a}{2}\left(\frac{1}{z+1}+\frac{1}{z-1}\right)$ is a rational Ahlfors function for all $a\in(0,2)$. Let $M(z)= (z+i)/(z-i)$. By an analogue of the transformation law, the composition
$$i R \circ M (z) =\frac{a}{4}(z+1/z)=: J(z)$$
--- the Joukowsky transform --- is the Ahlfors function on $J^{-1}(\UnitDisk)$ with respect to the point $i=M^{-1}(\infty)$. This was proved in \cite{BELL5} by much less elementary means.
\end{remark}

\section{Degree two}

We now determine all rational Ahlfors functions of degree two.

\begin{reptheorem}{deg2}
A rational map of degree two is a rational Ahlfors function if and only if it can be written in the form
$$R(z)= \frac{a_1}{z-p_1}+\frac{a_2}{z-p_2},$$
for distinct $p_1, p_2 \in \mathbb{C}$ and positive $a_1,a_2$ satisfying $a_1+a_2 < |p_1-p_2|$.
\end{reptheorem}

\begin{proof}
We will prove that a rational map of degree two with poles at $0$ and $1$ is a rational Ahlfors function if and only if it can be written in the form
$$R(z)= \frac{a_1}{z}+\frac{a_2}{z-1},$$
for positive $a_1,a_2$ satisfying $a_1+a_2 < 1$. This is sufficient by Observation \ref{affine} and the fact that any two distinct points in the plane can be moved to $0$ and $1$ by a complex affine transformation.

\paragraph{$(\Leftarrow)$} The critical points of $R$ are the solutions to the equation
$$
a_1(z-1)^2 + a_2 z^2 =0,
$$
which are given by
$$
\frac{a_1 \pm i \sqrt{a_1 a_2}}{a_1 + a_2}.
$$

One computes that the corresponding critical values are
$$
(a_1-a_2)\mp i 2\sqrt{a_1 a_2}
$$
and that their modulus is $a_1+a_2$, which is less than $1$ by hypothesis.

By Lemma \ref{nconnected}, $R^{-1}(\UnitDisk)$ is doubly connected, and by Theorem \ref{theo real}, $R$ is Ahlfors on $R^{-1}(\UnitDisk)$.

\paragraph{$(\Rightarrow)$} Suppose that $R$ is a rational Ahlfors function of degree two with poles at $0$ and $1$ and write $R(z)=a_1 / z + a_2 / (z-1)$ for some $a_1, a_2 \in \ComplexPlane \setminus\{ 0 \}$. We want to prove that the residues $a_1$ and $a_2$ are real and positive.

We start by proving that $a_1$ and $a_2$ are real, or that $R$ is equal to
$$Q(z):=\overline{R(\overline{z})}=\frac{\overline{a_1}}{z}+\frac{\overline{a_2}}{z-1}.$$

We claim that there is a biholomorphism $g : R^{-1}(\UnitDisk) \to Q^{-1}(\UnitDisk)$ satisfying $g(\infty)=\infty$ and $\lim_{z \to \infty} g(z)/z = \lambda $ for some $\lambda \in \ComplexPlane$ with $|\lambda|=1$.

Indeed, by hypothesis, $R^{-1}(\UnitDisk)$ is a doubly connected domain, and as such it is bi\-holomorphic to a circular annulus. It follows that there is an anticonformal involution $J : R^{-1}(\UnitDisk) \to R^{-1}(\UnitDisk)$ fixing $\infty$ as well as the two boundary components. As the complex conjugation maps $R^{-1}(\UnitDisk)$ onto $Q^{-1}(\UnitDisk)$, $g(z) = \overline{J(z)}$ is the biholomorphism sought-after. The limit $\lambda=\lim_{z\to \infty} \overline{J(z)} / z$ has modulus $1$ because $J\circ J = \id$.

Consider the map $\lambda Q \circ g$. This is a holomorphic map from $R^{-1}(\UnitDisk)$ to $\UnitDisk$ with derivative
$$
(\lambda Q \circ g)'(\infty)=Q'(\infty)=\overline{R'(\infty)}=R'(\infty)
$$
at infinity. The last equality holds because $R$ is the Ahlfors function on $R^{-1}(\UnitDisk)$, so that $R'(\infty)$ is positive. By uniqueness of the Ahlfors function, we have
$$
R = \lambda Q \circ g.
$$

Writing this equation as $R \circ \id = (\lambda Q) \circ g$, we see from the uniqueness part of Theorem \ref{Bellrep} that $g$ is the restriction of a M\"obius transformation for which the equality $R = \lambda Q \circ g$ holds on all of $\RiemannSphere$. By construction, $g$ maps each boundary component of $\partial R^{-1}(\UnitDisk)$ to its image by complex conjugation, and it follows that $g(0)=0$ and $g(1)=1$. Since we also have $g(\infty)=\infty$, $g$ is the identity map. Thus $\lambda=1$, and $R=Q$.

This proves that the residues $a_1$ and $a_2$ of $R$ are real. By Theorem \ref{posres}, they must be positive. The sum $a_1+a_2$ has to be less than $1$ for $R^{-1}(\UnitDisk)$ to be doubly connected, as shown by the calculation carried out in the first part of the proof.
\end{proof}

\section{Computation of analytic capacity and numerical examples}

\subsection{Analytic capacity}

Let $K$ be a compact subset of $\mathbb{C}$, and let $X$ be the component of $\RiemannSphere \setminus K$ containing $\infty$. The \textit{analytic capacity of} $K$ is defined as
$$\gamma(K):= \max \{\, \Re f'(\infty) \mid f: X \to \mathbb{D} \mbox{ is holomorphic with } f(\infty)=0 \,\},$$
the derivative at infinity of the Ahlfors function on $X$.

Historically, analytic capacity has been studied in connection with a problem of Painlev\'e, on finding a geometric characterization of the compact sets which are removable for bounded holomorphic functions. Analytic capacity has also been linked in the 1960's by Vitushkin to the theory of uniform rational approximation of holomorphic functions.

In general, it is quite difficult to identify precisely the analytic capa\-city or the Ahlfors function, even for nice compact sets such as finite unions of disjoint round disks. Moreover, despite recent advances due to Tolsa and many others, the pro\-perties of analytic capacity are far from being fully understood. For example, it is not known whether analytic capacity is subadditive.

On the other hand, the numerical computation of the analytic capacity of compact sets bounded by analytic Jordan curves is now rendered possible, thanks to a method from \cite{YOU}. In particular, this method applies to the compact set $R^{-1}(\RiemannSphere\setminus \UnitDisk)$ for any given $n$-good rational map $R$, and allows us to check whether or not $R$ is a rational Ahlfors function.

\subsection{Computation of analytic capacity}
\label{subsec31}
Suppose that $X$ is a domain containing $\infty$ bounded by a finite number of disjoint analytic Jordan curves, let $K:=\RiemannSphere \setminus X$, and let $A(X)=C(\overline{X})\cap\mathcal{O}(X)$ be the set of all functions continuous on $\overline{X}$ and holomorphic on $X$. Then we have the following :

\begin{theorem}[Garabedian \cite{GARA}]
\label{est1}
\begin{equation*}
\gamma(K) = \min \left\{\, \frac{1}{2\pi}\int_{\partial X} |g(z)|^2|dz| : g \in A(X), \, g(\infty)=1 \,\right\}.
\end{equation*}
\end{theorem}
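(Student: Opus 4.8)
The plan is to prove the two inequalities $\gamma(K)\le\beta(K)$ and $\gamma(K)\ge\beta(K)$, where $\beta(K)$ denotes the infimum on the right-hand side. Throughout, let $f$ be the Ahlfors function on $X$; since $\partial X$ consists of finitely many analytic Jordan curves, $f$ extends holomorphically across $\partial X$ by the Schwarz reflection principle, so $f\in A(X)$, and it satisfies $f(\infty)=0$, $|f|\equiv 1$ on $\partial X$, and $f'(\infty)=\gamma(K)>0$. Orient $\partial X$ so that the residue theorem applied on a large annular region gives $\frac{1}{2\pi i}\int_{\partial X}h(z)\,dz=h'(\infty)$ for every $h$ that is holomorphic on $X$ (including at $\infty$) and continuous on $\overline X$.

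For $\gamma(K)\le\beta(K)$: given $g\in A(X)$ with $g(\infty)=1$, the product $fg^2$ is holomorphic on $X$, continuous on $\overline X$, and vanishes at $\infty$ with $(fg^2)'(\infty)=f'(\infty)g(\infty)^2=\gamma(K)$. Hence
$$
\gamma(K)=\left|\frac{1}{2\pi i}\int_{\partial X}f(z)g(z)^2\,dz\right|\le\frac{1}{2\pi}\int_{\partial X}|f(z)|\,|g(z)|^2\,|dz|\le\frac{1}{2\pi}\int_{\partial X}|g(z)|^2\,|dz|,
$$
using $|f|\le 1$ on $\partial X$. Taking the infimum over $g$ gives $\gamma(K)\le\beta(K)$.

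For the reverse inequality — which will also show the infimum is attained — I would produce a competitor $\psi$ with $\frac{1}{2\pi}\int_{\partial X}|\psi|^2\,|dz|=\gamma(K)$, namely the \emph{Garabedian function}. Let $\psi$ be the minimizer of $\frac{1}{2\pi}\int_{\partial X}|g|^2\,|dz|$ over all $g$ in the Hardy space $H^2(\partial X)$ (the closure of $A(X)$ in $L^2(\partial X,|dz|)$) satisfying $g(\infty)=1$: it exists and is unique by the projection theorem, because evaluation at $\infty$ is a bounded linear functional on $H^2(\partial X)$ (seen after a M\"obius change of variable sending $\infty$ to a finite interior point); it belongs to $A(X)$ because $\partial X$ is analytic — indeed $\psi$ is a constant multiple of the Szeg\H{o} kernel $S(\cdot,\infty)$, which extends holomorphically past $\partial X$ — and $\beta(K)=\frac{1}{2\pi}\int_{\partial X}|\psi|^2\,|dz|$. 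Minimality is equivalent to the orthogonality relations $\int_{\partial X}\psi\,\overline h\,|dz|=0$ for every $h\in A(X)$ with $h(\infty)=0$, which together with $\psi(\infty)=1$ give $\int_{\partial X}\psi\,\overline h\,|dz|=2\pi\beta(K)\,\overline{h(\infty)}$ for all $h\in A(X)$. The crux is then the classical boundary identity $f(z)\psi(z)^2\,dz=i\,|\psi(z)|^2\,|dz|$ on $\partial X$ (equivalently, $f\psi^2\,dz$ is a nonnegative multiple of $|dz|$ at each boundary point): granting it,
$$
\gamma(K)=(f\psi^2)'(\infty)=\frac{1}{2\pi i}\int_{\partial X}f\psi^2\,dz=\frac{1}{2\pi i}\int_{\partial X}i\,|\psi|^2\,|dz|=\frac{1}{2\pi}\int_{\partial X}|\psi|^2\,|dz|=\beta(K),
$$
which, with the easy direction, finishes the proof.

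To establish that identity, the idea is to use the orthogonality relations with $h(z)=(z-w)^{-1}$, $w$ ranging over the bounded complementary components of $X$, to show that the boundary function $\overline{\psi(z)}\,|dz|/dz$ has vanishing Cauchy transform off $\overline X$, hence extends to a function $\Lambda$ holomorphic on $X$ with $\Lambda(\infty)=0$ (the Garabedian kernel at $\infty$, up to a constant), and then to identify $i\Lambda/\psi$ with $f$. A priori $i\Lambda/\psi$ is only meromorphic on $X$, but it has modulus $1$ on $\partial X$ (since $|\Lambda|=|\psi|$ there), value $0$ and derivative $\beta(K)$ at $\infty$; the one nontrivial point — and the step I expect to be the main obstacle — is that it has no poles, i.e.\ that the zeros of $\psi$ in $X$ are cancelled by zeros of $\Lambda$, which is exactly the classical coincidence of the zeros of the Szeg\H{o} and Garabedian reproducing kernels. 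Once this is known, $i\Lambda/\psi$ is an admissible competitor for the Ahlfors extremal problem, so $\gamma(K)\ge(i\Lambda/\psi)'(\infty)=\beta(K)$, and $i\Lambda/\psi=f$ by uniqueness of the Ahlfors function, which retroactively yields the boundary identity. (An equivalent route: $\gamma(K)$ is the norm of $h\mapsto h'(\infty)$ on $A(X)$, which by Hahn--Banach and the F.\ and M.\ Riesz theorem for finitely connected domains equals $\frac{1}{2\pi}\inf\{\int_{\partial X}|G|\,|dz|:G\in H^1(\partial X),\ G(\infty)=1,\ G'(\infty)=0\}$; factoring an extremal $G$ as a product of two $H^2$ functions of equal boundary modulus and rescaling produces $\psi$ — at the same technical cost in multiply connected Hardy-space theory.)
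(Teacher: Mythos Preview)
The paper does not prove this theorem; it is stated with attribution to Garabedian and used as a black box for the numerical method in Section~6. There is therefore no paper proof to compare against.

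Your outline is the classical Garabedian argument and is correct as a sketch. The easy inequality $\gamma(K)\le\beta(K)$ via the residue computation for $fg^2$ is clean and complete. For the reverse inequality you correctly set up the Garabedian function $\psi$ as the $L^2(\partial X,|dz|)$ projection (equivalently, a normalized Szeg\H{o} kernel), derive the orthogonality relations characterizing it, build the companion function $\Lambda$ from the boundary relation $\overline{\psi}\,|dz|/dz$, and reduce everything to the identity $f=i\Lambda/\psi$. You are right that the one substantive gap is showing $i\Lambda/\psi$ is pole-free in $X$, i.e.\ that the zeros of the Szeg\H{o} and Garabedian kernels in $X$ coincide; this is the genuine content of Garabedian's theorem and you flag it honestly rather than prove it. The alternative duality route via Hahn--Banach, the F.\ and M.\ Riesz theorem for finitely connected domains, and $H^1$ factorization is also standard and, as you note, carries the same technical burden. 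In short: your proposal is a faithful and correct road map of the classical proof, with the hard lemma identified but left as a citation --- which is exactly how the paper itself treats the whole theorem.
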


\begin{theorem}[Younsi--Ransford \cite{YOU}]
\label{est2}
\begin{equation*}
\gamma(K) = \max \left\{\, 2\Re h'(\infty) - \frac{1}{2\pi}\int_{\partial X} |h(z)|^2 |dz| : h \in A(X), \, h(\infty)=0 \,\right\}.
\end{equation*}

\end{theorem}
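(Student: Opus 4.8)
The plan is to deduce Theorem~\ref{est2} from Garabedian's Theorem~\ref{est1} by a completing-the-square argument, proving the two inequalities separately. Throughout, abbreviate $\|u\|^2 := \frac{1}{2\pi}\int_{\partial X}|u(z)|^2\,|dz|$ for $u \in A(X)$, and orient $\partial X$ positively with respect to $X$; since $X$ is the unbounded component of $\RiemannSphere \setminus K$, this means each boundary curve is traversed clockwise around the component of $K$ it bounds.

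First I would record the elementary identity that, for every $h \in A(X)$ with $h(\infty)=0$ and every $g \in A(X)$ with $g(\infty)=1$,
$$
h'(\infty) = -\frac{1}{2\pi i}\int_{\partial X} h(z)g(z)\,dz .
$$
Indeed $hg \in A(X)$ vanishes at $\infty$ with $(hg)'(\infty)=h'(\infty)g(\infty)=h'(\infty)$, and deforming $\partial X$ to a large clockwise circle around $K$ and applying Cauchy's theorem (i.e. reading off the residue at $\infty$) gives the value $-h'(\infty)$, independently of which $g$ is used. From this, estimating the contour integral by the arc-length integral and then applying the Cauchy--Schwarz inequality,
$$
\Re h'(\infty) \le |h'(\infty)| \le \frac{1}{2\pi}\int_{\partial X}|h(z)|\,|g(z)|\,|dz| \le \|h\|\,\|g\| ,
$$
so that $2\Re h'(\infty) - \|h\|^2 \le 2\|h\|\,\|g\| - \|h\|^2 = \|g\|^2 - (\|h\|-\|g\|)^2 \le \|g\|^2$. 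Taking the infimum over admissible $g$ and invoking Theorem~\ref{est1} yields $2\Re h'(\infty)-\|h\|^2 \le \gamma(K)$ for every admissible $h$, which is one half of Theorem~\ref{est2}.

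For the reverse inequality I would exhibit a nearly optimal $h$ built from the Ahlfors function. Let $f$ be the Ahlfors function on $X$; since $\partial X$ consists of analytic Jordan curves, $f$ extends analytically across $\partial X$ by the Schwarz reflection principle, so $f \in A(X)$, with $f(\infty)=0$, $|f|\equiv 1$ on $\partial X$, and $f'(\infty)=\gamma(K)$. Choose $g \in A(X)$ with $g(\infty)=1$ and with $\|g\|^2$ as close as we like to the minimum in Theorem~\ref{est1}, namely $\gamma(K)$, and set $h := fg \in A(X)$. Then $h(\infty)=0$, $h'(\infty)=f'(\infty)g(\infty)=\gamma(K)$, and, because $|f|=1$ on $\partial X$,
$$
\|h\|^2 = \frac{1}{2\pi}\int_{\partial X}|f(z)|^2\,|g(z)|^2\,|dz| = \|g\|^2 .
$$
Hence $2\Re h'(\infty)-\|h\|^2 = 2\gamma(K)-\|g\|^2$, which can be made arbitrarily close to $\gamma(K)$; this gives the other half of Theorem~\ref{est2} (and, taking $g$ to be an actual Garabedian extremal, shows the maximum is attained, by $h=fg$).

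The computation is short; the points that require care are purely bookkeeping: fixing the orientation of $\partial X$ so that the sign in the residue identity comes out right, and observing that the Ahlfors function and the Garabedian extremal both belong to $A(X)$ under the analyticity hypothesis on $\partial X$. The only genuinely non-formal step is the idea behind the reverse inequality, namely that multiplying a minimizer of Garabedian's functional by the Ahlfors function produces a maximizer of the Younsi--Ransford functional --- this is the duality that links the two extremal problems.
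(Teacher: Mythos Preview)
The paper does not actually prove Theorem~\ref{est2}; it is quoted from \cite{YOU} and used as a black box for the numerical method, so there is no in-paper proof to compare against. That said, your argument is correct and is essentially the classical duality proof one would expect.

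A couple of small remarks for polish. In the residue identity you should perhaps say explicitly that Cauchy's theorem applies to $hg$ on the annular region between $\partial X$ and a large circle because functions in $A(X)$ on a domain with analytic (or merely rectifiable) boundary satisfy Cauchy's theorem; this is standard but worth a clause. For the reverse inequality you invoke the existence of an actual Garabedian minimizer $g\in A(X)$; Theorem~\ref{est1} is stated as a $\min$, so this is legitimate, and the product $h=fg$ lies in $A(X)$ since both factors do. Your observation that $|f|\equiv 1$ on $\partial X$ is exactly what is needed and follows from Ahlfors' theorem together with reflection across the analytic boundary curves, as the paper itself uses elsewhere (e.g.\ in the proof of Theorem~\ref{Bellrep}).

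The argument you give is in fact the one in \cite{YOU}: the upper bound is Cauchy--Schwarz plus the algebraic inequality $2ab-a^2\le b^2$, and equality is realized by $h=fg$ with $f$ the Ahlfors function and $g$ the Garabedian extremal. So while there is nothing in the present paper to compare with, your proof matches the original source.
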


We also mention that Theorem \ref{est1} was extended to arbitrary compact sets by Havinson \cite{HAV} and that Theorem \ref{est2} remains true under the weaker assumption that $K$ is bounded by piecewise-analytic curves, provided $A(X)$ is replaced by the Smirnov class $E^2(X)$. 

These estimates yield a practical method for the computation of $\gamma(K)$, based on quadratic form minimization. We explain the method for obtaining a decreasing sequence of upper bounds for $\gamma(K)$, following \cite{YOU}. The method for obtaining lower bounds is similar.

Let $S$ be any finite set containing at least one point in each component of the interior of $K$. The set of rational functions with poles in $S$ is uniformly dense in $A(X)$, by Mergelyan's theorem (although it has been known in this special case since Runge's theorem).

For each $k \in \mathbb{N}$, we let $\mathcal{F}_k$ be the set of all functions of the form $(z-p)^{-j}$, with $1 \leq j \leq k$ and $p\in S$. In view of Theorem \ref{est1}, the quantity
$$
u_k := \min \left\{\, \frac{1}{2\pi}\int_{\partial X} |1+g(z)|^2|dz| : g \in \spann \mathcal{F}_k \,\right\}
$$
gives an upper bound for $\gamma(K)$.

In practice, we find the minimum $u_k$ as follows.

\begin{enumerate}
\item We define
$$
g(z):= \sum_{g_j \in \mathcal{F}_k} (c_j + id_j) g_j(z),
$$
where the $c_j$'s and $d_j$'s are real numbers to determine.

\item We compute the integral
$$
\frac{1}{2\pi}\int_{\partial X} |1+g(z)|^2|dz|
$$
as an expression in the $c_j$'s and $d_j$'s. This gives a quadratic form with a linear term and a constant term.

\item We find the $c_j$'s and $d_j$'s that minimize this expression, by solving the corres\-ponding linear system.
\end{enumerate}

The sequence $(u_k)_{k=1}^\infty$ is decreasing by construction, and it converges to $\gamma(K)$ by Mergelyan's theorem.

\subsection{Numerical examples}

In this subsection, we present several numerical exam\-ples where we compute the analytic capacity of compact sets of the form $$K=R^{-1}(\RiemannSphere \setminus \UnitDisk)=\{\,z \in \mathbb{C} : |R(z)| \geq 1 \,\},$$ where $R$ is an $n$-good rational map.

Observe that if $R(z) = \sum_{j=1}^n a_j/(z-p_j),$ then $R'(\infty) = \sum_{j=1}^n a_j,$ and hence $R$ is a rational Ahlfors function if and only if $\gamma(K)= \sum_{j=1}^{n}a_j$.

The method for the computation of $\gamma(K)$ described in the previous subsection involves integrals over the boundary of $K$ with respect to arclength measure. Consequently, the first step is to obtain a parametrization of $\partial K = \{\,z \in \mathbb{C} : |R(z)| = 1 \,\}$. We do this by solving for $z$ in the equation $R(z) = e^{it}$, which is equivalent to finding the roots of a polynomial of degree at most $n$. This is easily done with \textsc{maple} when $n\leq 3$.

The second step is to choose a set $S$ containing at least one point in each component of the interior of $K$. In each example, we simply take $S=\{p_1,...,p_n\}$, the set of poles of $R$.

Finally, we have to compute different integrals over $\partial K$ and solve a linear system. This numerical work is done with \textsc{matlab}, the integrals being computed with a precision of $10^{-9}$.

We first test the method on rational $n$-good maps which we know are rational Ahlfors functions by Theorem \ref{theo real} and Theorem \ref{rotsym}. Then we give an example which supports Question \ref{question symdom}. Finally, we give an example of a $3$-good rational map with positive residues which is not a rational Ahlfors function.

\newpage

\begin{example}
\label{ex1}
Let
$$R(z)=  \frac{0.3}{z+1}+ \frac{0.2}{z-1}.$$
Below is a graph representing the boundary of the compact set $$K=\{\,z \in \mathbb{C}: |R(z)|\geq 1 \,\}.$$

\begin{figure}[h!t!b]
\begin{center}
\includegraphics[width=6cm, height=6cm]{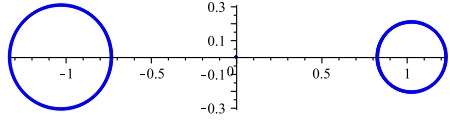}
\caption{The boundary of the compact set $K$ for Example \ref{ex1}}
\end{center}
\end{figure}

Table 1 contains the bounds for $\gamma(K)$ obtained with the method.

\begin{table}[!hbp]
\begin{center}
\caption{Lower and upper bounds for $\gamma(K)$ for Example \ref{ex1}}
\label{table1}
\begin{tabular}{|c|l|l|}
\hline
$k$ & Lower bound for $\gamma(K)$ & Upper bound for $\gamma(K)$ \\
\hline
$1$ & 0.492562045464946 &  0.500047419736669 \\
$2$ & 0.499952584760167 &  0.500003281768904 \\
$3$ &  0.499996718252636 &  0.500000110442346 \\
$4$ & 0.499999889557678 &  0.500000003956031 \\
$5$ & 0.499999996043969 &  0.500000000292436\\
\hline
\end{tabular}
\end{center}
\end{table}

By Theorem \ref{theo real}, $R$ is the Ahlfors function for the compact set $K$, and thus
$$\gamma(K) = R'(\infty) = 0.3+0.2=0.5.$$
Hence we see that the results obtained numerically are consistent with the predicted value.
\end{example}

\newpage

\begin{example}
\label{ex2}

Let
$$R(z)=  \frac{0.95}{z+1} + \frac{0.98}{z-1}.$$
Below is a graph representing the boundary of the compact set $$K=\{\,z \in \mathbb{C}: |R(z)|\geq 1 \,\},$$ consisting of two disjoint non-convex analytic Jordan curves.

\begin{figure}[h!t!b]
\begin{center}
\includegraphics[width=6cm, height=6cm]{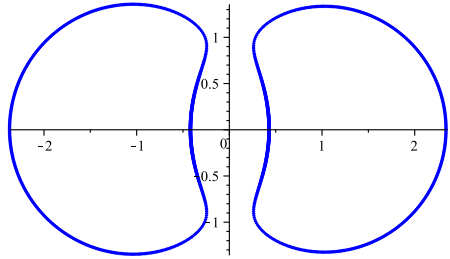}
\caption{The boundary of the compact set $K$ for Example \ref{ex2}}
\end{center}
\end{figure}

Table 2 contains the bounds for $\gamma(K)$ obtained with the method.

\begin{table}[!hbp]
\begin{center}
\caption{Lower and upper bounds for $\gamma(K)$ for Example \ref{ex2}}
\label{table2}
\begin{tabular}{|c|l|l|}
\hline
$k$ & Lower bound for $\gamma(K)$ & Upper bound for $\gamma(K)$ \\
\hline
$1$ &  1.469145654305464 &  1.998883274734441 \\
$2$ & 1.863490503463674 &  1.997657625980182 \\
$3$ &  1.864633834925701 &  1.957570768708159 \\
$4$ & 1.902817542815138 &  1.956984859867938 \\
$5$ &  1.903387234304595 &  1.944734961210238\\
$10$ & 1.924138647216576 & 1.935693736889831\\
$20$ & 1.928820666790728 & 1.931123140362772\\
$30$ & 1.929615838914482 &  1.930334911010434\\
$35$ & 1.929706466138935 & 1.930230869959049\\
$40$ & 1.929751020215389 & 1.930091261090859\\

\hline
\end{tabular}
\end{center}
\end{table}

By Theorem \ref{theo real}, $R$ is the Ahlfors function for the compact set $K$, and thus
$$\gamma(K) = R'(\infty) = 0.95+0.98=1.93.$$
Once again we see that the results obtained numerically are consistent with the predicted value, even though the convergence is slower in this case.
\end{example}

\newpage

\begin{example}
\label{ex3}

$$R(z) = \frac{0.2}{z+2} + \frac{0.1}{z} + \frac{0.4}{z-5}.$$
Below is a graph representing the compact set $$K=\{\,z \in \mathbb{C}: |R(z)|\geq 1 \,\}.$$

\begin{figure}[h!t!b]
\begin{center}
\includegraphics[width=6cm]{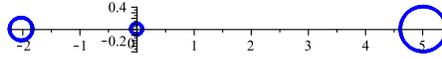}
\caption{The compact set $K$ for Example \ref{ex3}}
\end{center}
\end{figure}

Table 3 contains the bounds for $\gamma(K)$ obtained with the method.

\begin{table}[!hbp]
\begin{center}
\caption{Lower and upper bounds for $\gamma(K)$ for Example \ref{ex3}}
\label{table3}
\begin{tabular}{|c|l|l|}
\hline
$k$ & Lower bound for $\gamma(K)$ & Upper bound for $\gamma(K)$ \\
\hline
$1$ &   0.696735209508754 &     0.700011861859377\\
$2$ &    0.699988138057939 &       0.700000163885012 \\
$3$ &     0.699999835775098 &     0.700000002518033 \\
\hline
\end{tabular}
\end{center}
\end{table}

By Theorem \ref{theo real}, $R$ is the Ahlfors function for the compact set $K$, and again we have
$$\gamma(K) = R'(\infty) = 0.2+0.1+0.4=0.7.$$
\end{example}

\newpage

\begin{example}
\label{ex4}
Let
$$R(z) = \frac{z^2}{z^3-1}.$$

Below is a graph representing the boundary of the compact set $$K=\{\,z \in \mathbb{C}: |R(z)|\geq 1 \,\}.$$

\begin{figure}[h!t!b]
\begin{center}
\includegraphics[width=6cm, height=6cm]{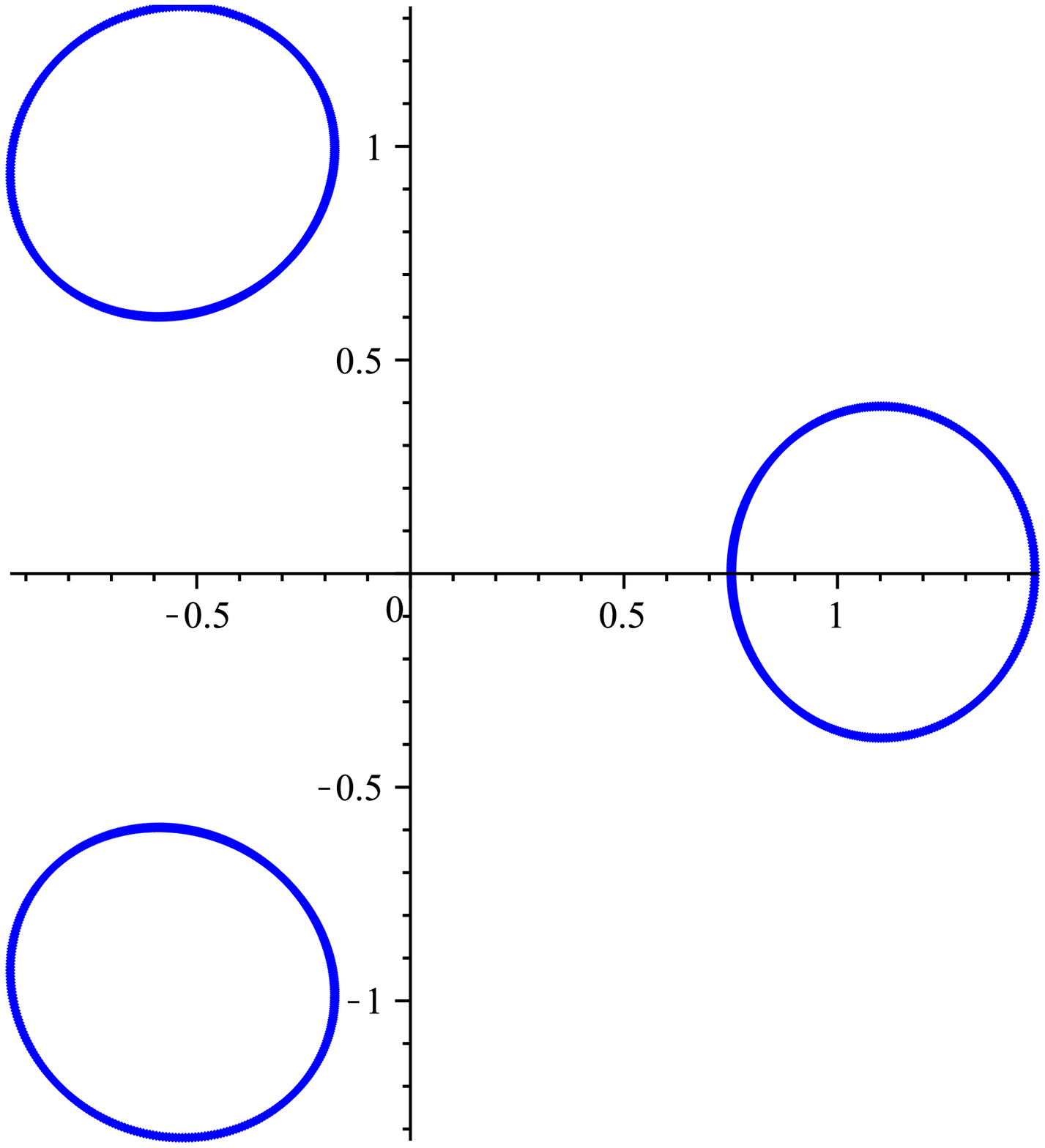}
\caption{The boundary of the compact set $K$ for Example \ref{ex4}}
\end{center}
\end{figure}

Table 4 contains the bounds for $\gamma(K)$ obtained with the method.

\begin{table}[!hbp]
\begin{center}
\caption{Lower and upper bounds for $\gamma(K)$ for Example \ref{ex4}}
\label{table4}
\begin{tabular}{|c|l|l|}
\hline
$k$ & Lower bound for $\gamma(K)$ & Upper bound for $\gamma(K)$ \\
\hline
$1$ &   0.897012961211562 &      1.003766600572323\\
$2$ &    0.996247533470256&       1.000449247199905 \\
$3$ &     0.999550954532515 &     1.000227970885994 \\
$4$ &     0.999772081072887 &      1.000015305500631 \\
$5$ &     0.999984694733624 &     1.000004234543914 \\
$6$ &     0.999995765474017 &      1.000002049275081 \\
\hline
\end{tabular}
\end{center}
\end{table}

By Theorem \ref{rotsym}, $R$ is the Ahlfors function for the compact set $K$, and thus we have
$$\gamma(K) = R'(\infty) = 1.$$
\end{example}

\newpage
\begin{example}
\label{ex5}
The following example is in connection with Question \ref{question symdom}. Let
$$R(z) = \frac{0.5}{z} + \frac{0.4}{z-(2+i)} + \frac{0.4}{z-(2-i)},$$
so that $R$ has positive residues and satisfies $R(\overline z) = \overline{R(z)}$ for all $z$.

Below is a graph representing the boundary of the compact set $$K=\{\,z \in \mathbb{C}: |R(z)|\geq 1 \,\}.$$

\begin{figure}[h!t!b]
\begin{center}
\includegraphics[width=6cm, height=6cm]{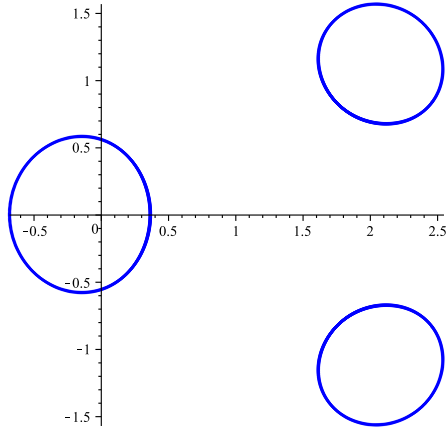}
\caption{The boundary of the compact set $K$ for Example \ref{ex5}}
\end{center}
\end{figure}

Table 5 contains the bounds for $\gamma(K)$ obtained with the method.

\begin{table}[!hbp]
\begin{center}
\caption{Lower and upper bounds for $\gamma(K)$ for Example \ref{ex5}}
\label{table5}
\begin{tabular}{|c|l|l|}
\hline
$k$ & Lower bound for $\gamma(K)$ & Upper bound for $\gamma(K)$ \\
\hline
$1$ &   1.156483451112665 &    1.306262607579208 \\
$2$ &    1.293906716808142 &      1.300866124135705 \\
$3$ &     1.299286594644695 &     1.300451765037035 \\
$4$ &   1.299697642245979 &    1.300120036845019 \\

\hline
\end{tabular}
\end{center}
\end{table}

The results obtained seem to indicate that $\gamma(K)=1.3=R'(\infty)$ or equivalently, that $R$ is a rational Ahlors function. This was also the case for every example of symmetric rational function that we tested numerically. The answer to Question \ref{question symdom} thus appears to be positive, at least in degree $3$.
\end{example}

\newpage
\begin{example}
\label{ex6}
Our last example shows that the positivity of the residues of a $3$-good rational map is not sufficient for it to be a rational Ahlfors function. Let

$$R(z) = \frac{0.4}{z} + \frac{0.4}{z-6} + \frac{0.4}{z-(1+i)}.$$

Below is a graph representing the boundary of the compact set $$K=\{\,z \in \mathbb{C}: |R(z)|\geq 1 \,\}.$$

\begin{figure}[h!t!b]
\begin{center}
\includegraphics[width=6cm, height=6cm]{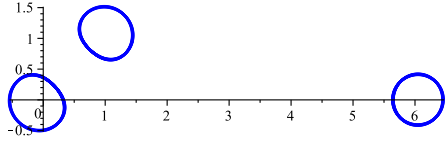}
\caption{The boundary of the compact set $K$ for Example \ref{ex6}}
\end{center}
\end{figure}

Table 6 contains the bounds for $\gamma(K)$ obtained with the method.

\begin{table}[!hbp]
\begin{center}
\caption{Lower and upper bounds for $\gamma(K)$ for Example \ref{ex6}}
\label{table6}
\begin{tabular}{|c|l|l|}
\hline
$k$ & Lower bound for $\gamma(K)$ & Upper bound for $\gamma(K)$ \\
\hline
$1$ &   1.125853723035751 &    1.203267502101022  \\
$2$ &    1.197416632904951 &       1.201353200697178 \\
$3$ &     1.199380567900335 &     1.200524665448821 \\
$4$ &   1.200219059439418 &   1.200426277666660 \\
$5$ &   1.200321460719667 &   1.200387399481300  \\
$6$ &    1.200361472698255 &    1.200378783416171 \\
$7$ &    1.200370456320151 &     1.200375934512287 \\

\hline
\end{tabular}
\end{center}
\end{table}

This table shows that $R$ is not a rational Ahlfors function, since
$$R'(\infty) = 1.2 < 1.200370456320151 \leq \gamma(K).$$
This phenomenon seems to be generic. We tested several $3$-good rational maps with positive residues, and whenever the domain $R^{-1}(\UnitDisk)$ admitted no symmetry, we had $R'(\infty) < \gamma(K)$. The example above was the one we found with the largest ratio $\gamma(K) / R'(\infty)$.

\end{example}

\newpage

\subsection{Generalizing to higher degrees}

In this subsection, we show how to gene\-ralize Example \ref{ex6} to obtain examples with higher degree.

\begin{theorem} \label{generalizedexample}
For every $n\geq 3$, there exists an $n$-good rational map which has only positive residues and is not Ahlfors.
\end{theorem}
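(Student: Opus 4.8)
The plan is to perturb Example~\ref{ex6}, which already realizes the degree-$3$ case, by adding far-away poles with small positive residues so that the new poles contribute negligibly to both $R'(\infty)$ and $\gamma(K)$. Concretely, start from $R_3(z) = \frac{0.4}{z}+\frac{0.4}{z-6}+\frac{0.4}{z-(1+i)}$, which is $3$-good, has positive residues, and satisfies $R_3'(\infty) < \gamma(K_3)$ with $K_3 = R_3^{-1}(\RiemannSphere\setminus\UnitDisk)$. For $n \geq 4$, I would set
$$
R_n(z) = R_3(z) + \sum_{j=4}^{n} \frac{\eps}{z - q_j},
$$
where $q_4,\dots,q_n$ are distinct points chosen far from $\overline{\UnitDisk}$ and from one another (for instance $q_j = N j$ for large $N$), and $\eps>0$ is small. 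The residues of $R_n$ are all positive by construction. I would first check that $R_n$ is $n$-good: for $\eps$ small and the $q_j$ far apart, the critical points of $R_n$ split into a cluster near each critical point of $R_3$ (all of which have $|R_3| < 1$, hence $|R_n|<1$ there by continuity) plus one critical point near each new pole $q_j$, where $|R_n| \approx \eps / (\text{distance between nearby poles})$ is small. By Lemma~\ref{nconnected}(5), this makes $R_n$ $n$-good for suitable parameters.

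The main point is then the strict inequality $R_n'(\infty) < \gamma(K_n)$, where $K_n = R_n^{-1}(\RiemannSphere\setminus\UnitDisk)$. We have $R_n'(\infty) = R_3'(\infty) + (n-3)\eps$, which is close to $R_3'(\infty)$. For the lower bound on $\gamma(K_n)$, I would use monotonicity of analytic capacity together with the fact that $K_n$ contains a set very close to $K_3$. More precisely, the component of $K_n$ near the original three poles converges to $K_3$ in the Hausdorff metric as $\eps\to 0$ and $N\to\infty$; combined with the continuity (lower semicontinuity suffices) of $\gamma$ under such convergence — which one can get from Theorem~\ref{est2}, since a near-optimal $h \in A(X_3)$ restricted to the slightly shrunk domain still gives a good lower bound for $\gamma(K_n)$ — we obtain $\gamma(K_n) \geq \gamma(K_3) - o(1)$. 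Since $\gamma(K_3) - R_3'(\infty) =: \delta > 0$ is a fixed positive gap, choosing $\eps$ and the gap parameters small enough forces
$$
\gamma(K_n) \geq \gamma(K_3) - \tfrac{\delta}{3} > R_3'(\infty) + \tfrac{\delta}{3} > R_3'(\infty) + (n-3)\eps = R_n'(\infty),
$$
so $R_n$ is not Ahlfors.

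The hard part will be making the semicontinuity estimate $\gamma(K_n) \geq \gamma(K_3) - o(1)$ rigorous, since $\gamma$ is notoriously delicate and the excerpt even remarks that its subadditivity is open. The cleanest route is probably to avoid abstract continuity statements entirely: fix once and for all a specific $h \in A(X_3)$ with $h(\infty)=0$ for which $2\Re h'(\infty) - \frac{1}{2\pi}\int_{\partial X_3}|h|^2\,|dz| > R_3'(\infty)$ (such an $h$ exists by Theorem~\ref{est2} and the strict inequality for Example~\ref{ex6}), note that $h$ is holomorphic in a fixed neighborhood $U$ of $K_3$, and observe that for $\eps$ small and $N$ large the domain $X_n = \RiemannSphere \setminus K_n$ satisfies $X_n \cap U \supset X_3 \cap U'$ for a slightly larger neighborhood, so that a suitably cut-off or slightly modified version of $h$ lies in $A(X_n)$ with essentially the same value of the functional. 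Plugging this competitor into Theorem~\ref{est2} for $K_n$ gives the needed lower bound on $\gamma(K_n)$ directly, with all error terms controlled by explicit uniform estimates on $h$ over the compact set $\overline{U}$. An alternative, if one prefers not to track the cut-off carefully, is to invoke the known stability of analytic capacity under Hausdorff-convergent approximation from inside by domains with analytic boundary, but the competitor-function argument is more self-contained and fits the toolkit already assembled in the paper.
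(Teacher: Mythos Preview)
Your strategy is sound and yields the result, but it differs from the paper's proof in two respects and has one execution wrinkle worth flagging.

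The paper places the extra poles $b_4,\dots,b_n$ \emph{inside} $R^{-1}(\UnitDisk)$ rather than far away. This makes $R_\eps^{-1}(\RiemannSphere\setminus\UnitDisk)$ converge in the Hausdorff sense to $K_3\cup\{b_4,\dots,b_n\}$, so $n$-goodness follows at once without tracking critical points. For the main inequality the paper bypasses Theorem~\ref{est2} entirely and uses the Ahlfors function $f$ on $X_3$ itself as competitor: since $f:X_3\to\UnitDisk$ is proper, Schwarz reflection extends it across $\partial X_3$, and for any $r>1$ the set $U_r=f^{-1}(r\UnitDisk)$ is an open neighborhood of $\overline{X_3}$ containing $R_\eps^{-1}(\UnitDisk)$ once $\eps$ is small. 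Then $(1/r)f$ is admissible on $R_\eps^{-1}(\UnitDisk)$, giving $\gamma(K_3)=f'(\infty)\le r\,f_\eps'(\infty)$ directly; a short contradiction argument finishes. This same competitor works verbatim with your far-away poles, so you could drop Theorem~\ref{est2} from your argument altogether.

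If you keep your route through Theorem~\ref{est2}, the sentence ``$h$ is holomorphic in a fixed neighborhood $U$ of $K_3$'' has the geometry reversed: a generic $h\in A(X_3)$ is holomorphic on $X_3=\RiemannSphere\setminus K_3$, not across $\partial K_3$, so your cut-off plan is not well defined as written. The clean fix is to observe that the lower bound in Table~\ref{table6} is produced by the method of subsection~\ref{subsec31}, hence by a \emph{rational} $h$ with poles only at $0,\,6,\,1+i$. Such an $h$ is holomorphic on all of $\RiemannSphere\setminus\{0,6,1+i\}$, so it lies in $A(X_n)$ automatically (the three original poles remain interior to $K_n$), and the only thing left to check is that the extra boundary contribution $\int_{\partial K_n\text{ near }q_j}|h|^2\,|dz|$ is $o(1)$; this is immediate since those curves have length $O(\eps)$ while $|h|=O(1/N)$ there. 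With that choice of $h$ no cut-off or modification is needed.
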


\begin{proof}
For $n=3$, such a map is given by
$$
R(z) = \frac{0.4}{z} + \frac{0.4}{z-6} + \frac{0.4}{z-(1+i)}.
$$

For $n>3$, choose any distinct points $b_4,...,b_n$ in $R^{-1}(\UnitDisk)$. For $\varepsilon>0$, define
$$
R_\varepsilon(z) := R(z) + \sum_{j=4}^n \frac{\varepsilon}{z-b_j}.
$$
Our claim is that when $\varepsilon$ is small enough, $R_\varepsilon$ is $n$-good and is not Ahlfors.

We have that $R_\varepsilon \to R$ locally uniformly on $\RiemannSphere \setminus \{ b_4 ,..., b_n \}$ as $\varepsilon \to 0$. This implies that $R_\varepsilon^{-1}(\RiemannSphere \setminus \UnitDisk)$ converges in the sense of Hausdorff to $R^{-1}(\RiemannSphere \setminus \UnitDisk) \cup \{ b_4 ,..., b_n\}$ as $\varepsilon \to 0$. In particular, when $\varepsilon$ is small enough, $R_\varepsilon^{-1}(\RiemannSphere \setminus \UnitDisk)$ has at least $n$ connected components, and thus $R_\varepsilon$ is $n$-good.

Let $f$ be the Ahlfors function on $R^{-1}(\UnitDisk)$ and let $f_\eps$ be the Ahlfors function on $R_\eps^{-1}(\UnitDisk)$. By Ahlfors' theorem, $f$ extends to a bounded holomorphic function on some neighborhood $U$ of $\overline{R^{-1}(\UnitDisk)}$. Let $r>1$, and let $U_r:=f^{-1}(r \UnitDisk)$. If $\eps$ is small enough, then $U_r$ contains $R_\varepsilon^{-1}(\UnitDisk)$, so that we have
$$
f'(\infty) \leq r \, f_\eps'(\infty).
$$
This inequality holds because $(1/r) f$ is a holomorphic function on $R_\varepsilon^{-1}(\UnitDisk)$ with image in $\UnitDisk$.

We thus have
$$
f'(\infty) \leq r \liminf_{\eps \to 0} f_\eps'(\infty).
$$
Since $r$ can be taken arbitrarily close to $1$, we obtain
$$
f'(\infty) \leq \liminf_{\eps \to 0} f_\eps'(\infty).
$$

Now suppose that there is a sequence $\eps_k \to 0$ such that $R_{\eps_k}=f_{\eps_k}$ for every $k$. The above inequality yields
$$
f'(\infty) \leq \liminf_{k \to \infty} f_{\eps_k}'(\infty) = \liminf_{k \to \infty} R_{\eps_k}'(\infty) = R'(\infty).
$$
This contradicts the lower bound $f'(\infty) \geq 1.200370456320151 > 1.2 = R'(\infty)$ obtained in Table \ref{table6}.

Therefore, when $\eps$ is small enough, $R_\eps$ is an $n$-good map with all positive residues which is not Ahlfors.
\end{proof}

\section{Residues need not be positive}

The goal of this last section is to prove that the residues of a rational Ahlfors function are not necessarily positive.

\begin{theorem}\label{nonnecessity}
For every $n\geq 3$, there exists a rational Ahlfors function of degree $n$ whose residues are not all positive.
\end{theorem}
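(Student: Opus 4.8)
The plan is to reduce the statement to Theorem~\ref{generalizedexample} by a topological (connectedness) argument. Let $\mathcal{G}_n$ be the space of $n$-good rational maps of degree $n$, regarded as an open subset of the $4n$-real-dimensional space of maps $z\mapsto\sum_{j=1}^{n}a_j/(z-p_j)$ with $a_j\neq 0$ and the $p_j$ distinct (taken modulo relabelling). Inside $\mathcal{G}_n$ let $\mathcal{A}_n$ be the set of rational Ahlfors functions of degree $n$ and let $\mathcal{P}_n$ be the set of $n$-good maps all of whose residues are positive; then $\mathcal{P}_n$ is a manifold of dimension $3n$, and by Theorem~\ref{theo real} the set $\mathcal{A}_n\cap\mathcal{P}_n$ is nonempty. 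The theorem amounts to the assertion $\mathcal{A}_n\not\subseteq\mathcal{P}_n$.

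The key preliminary step is to realize $\mathcal{A}_n$ itself as a topological manifold of dimension $3n$, using the normalized Bell representation (Corollary~\ref{normalizedBellrep}). I would let $\mathcal{C}_n$ be the set of unordered configurations of $n$ pairwise disjoint round disks in $\ComplexPlane$, a manifold of dimension $3n$, and send such a configuration to the rational map $R$ appearing in the normalized factorization of the Ahlfors function of the complementary circle domain $\Omega\ni\infty$. By Koebe's uniformization theorem, together with the rigidity statement that a conformal map between circle domains is the restriction of a M\"obius transformation (which, with the normalization at $\infty$, forces uniqueness), this assignment $\mathcal{C}_n\to\mathcal{A}_n$ is a bijection; continuous dependence of both the Ahlfors function and the Koebe uniformization on a smoothly varying family of analytically bounded domains makes it a homeomorphism. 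Hence $\mathcal{A}_n$ is a $3n$-dimensional topological manifold and the inclusion $\iota:\mathcal{A}_n\hookrightarrow\mathcal{G}_n$ is continuous.

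With this in hand, suppose for contradiction that $\mathcal{A}_n\subseteq\mathcal{P}_n$. Then $\iota$ is a continuous injection of $\mathcal{A}_n$ into $\mathcal{P}_n$, a manifold of the same dimension $3n$, so by Brouwer's invariance of domain $\mathcal{A}_n$ is open in $\mathcal{P}_n$. It is also closed in $\mathcal{P}_n$: if $R_k\in\mathcal{A}_n$ and $R_k\to R\in\mathcal{P}_n$, then $R$ still has degree $n$, positive residues, and no critical point on $\partial R^{-1}(\UnitDisk)$, so the domains $R_k^{-1}(\UnitDisk)$ converge nicely to $R^{-1}(\UnitDisk)$, the Ahlfors functions converge, and $R$ is the Ahlfors function of $R^{-1}(\UnitDisk)$, i.e.\ $R\in\mathcal{A}_n$. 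Moreover $\mathcal{P}_n$ is connected: given $R,\tilde R\in\mathcal{P}_n$ one first replaces each by $tR$ and $t\tilde R$ for a small $t>0$, which keeps all critical values inside $\UnitDisk$, and then deforms the poles and residues, which stays in $\mathcal{P}_n$ once $t$ is small enough because the critical values vary continuously. Since $\mathcal{A}_n$ is nonempty, the clopen argument gives $\mathcal{A}_n=\mathcal{P}_n$, i.e.\ every $n$-good rational map with positive residues is a rational Ahlfors function; this contradicts Theorem~\ref{generalizedexample} (and is exactly where the hypothesis $n\geq 3$ enters). Therefore $\mathcal{A}_n\not\subseteq\mathcal{P}_n$, which proves the theorem.

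The main obstacle is the second paragraph: showing rigorously that $\mathcal{A}_n$ is a topological $3n$-manifold and that $\iota$ is continuous. The bijection $\mathcal{C}_n\to\mathcal{A}_n$ follows directly from the Bell representation theorem, but one must carefully invoke the rigidity of conformal maps between finitely connected circle domains (for injectivity and for the uniqueness of the normalized representative) and continuous dependence of the Koebe uniformization and of the Ahlfors function on a real-analytically varying family of domains bounded by analytic Jordan curves (for continuity of the bijection and of its inverse). All the remaining ingredients — connectedness of $\mathcal{P}_n$, closedness of $\mathcal{A}_n$ inside it, invariance of domain, and the existence of a positive-residue non-Ahlfors map — are either elementary or already available in the paper.
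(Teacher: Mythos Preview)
Your proposal is correct and follows essentially the same route as the paper: parametrize circle domains by a $3n$-manifold, send each to its rational Ahlfors function via Corollary~\ref{normalizedBellrep}, and combine invariance of domain with closedness of the image and connectedness of the positive-residue locus to force surjectivity onto $\mathcal{P}_n$, contradicting Theorem~\ref{generalizedexample}. One streamlining in the paper worth noting: instead of first proving that $\mathcal{C}_n\to\mathcal{A}_n$ is a \emph{homeomorphism} (which would require continuity of the inverse) and then applying invariance of domain to the inclusion $\mathcal{A}_n\hookrightarrow\mathcal{P}_n$, the paper applies invariance of domain directly to the composite map $A:\mathcal{M}\to\mathcal{R}^+$, so it only needs to check that $A$ is continuous, injective, and has closed image---exactly the content of your third paragraph, and avoiding the extra step you flag as the ``main obstacle.''
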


We were unable to find any explicit examples of rational Ahlfors functions with non-positive residues, and for this reason our proof proceeds by contradiction.

Fix some $n \geq 3$. By Ahlfors' theorem and Corollary \ref{normalizedBellrep}, there is a map $A$ from non-degenerate $n$-connected domains containing $\infty$ to $n$-good maps, whose image is precisely the set of rational Ahlfors functions of degree $n$. The statement of Theorem \ref{nonnecessity} is equivalent to saying that the image of $A$ is not contained in the set of $n$-good maps with positive residues.

To show this, we need to think of the domain and range of the map $A$ as mani\-folds. In order to parametrize the set of isomorphism classes of non-degenerate $n$-connected domains containing infinity, we appeal to the following theorem of Koebe.

\begin{theorem}[Koebe]
Let $X$ be a non-degenerate $n$-connected domain containing $\infty$. Then there exists a unique injective meromorphic map $h : X \to \RiemannSphere$ normalized at infinity such that the complement $\RiemannSphere \setminus h(X)$ is a union of $n$ disjoint round disks.
\end{theorem}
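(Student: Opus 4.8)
The plan is to prove the two assertions of the theorem separately: the existence of a uniformizing map onto a circle domain, and its uniqueness under the normalization at infinity. Throughout I may assume, as in the proof of Theorem \ref{Bellrep}, that $X$ is bounded by $n$ disjoint analytic Jordan curves, since applying the Riemann mapping theorem to each complementary component replaces $X$ by a conformally equivalent domain with analytic boundary without altering the target class of circle domains. For existence I would use Koebe's osculation (iterative rounding) algorithm, and for uniqueness I would use repeated Schwarz reflection across the bounding circles.

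For existence, I would build a sequence of normalized conformal copies of $X$ in which the complementary components become progressively rounder. Write $\RiemannSphere \setminus X = K_1 \cup \dots \cup K_n$. Since each $K_j$ is a non-degenerate continuum, $\RiemannSphere \setminus K_j$ is simply connected and contains $\infty$, so the Riemann mapping theorem furnishes a unique conformal map $g_j$ from $\RiemannSphere \setminus K_j$ onto the complement of a round disk, normalized at infinity; the component $K_j$ is already round precisely when $g_j$ is affine. I would attach to each $K_j$ a numerical deviation $\delta(K_j)$ measuring the failure of $g_j$ to be affine, for instance through the Laurent coefficients of $g_j$ at $\infty$. Starting from $X_0 = X$ and $\Phi_0 = \id$, at each step I pick the component of largest deviation, apply the normalized map $g$ sending its complement onto the exterior of a round disk, and set $X_{m+1} = g(X_m)$ and $\Phi_{m+1} = g \circ \Phi_m$. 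The chosen component becomes a genuine circle, while the Koebe distortion theorem and the Schwarz lemma control how much the remaining components are perturbed. The crux is to show that the maximal deviation $\Delta_m := \max_j \delta(K_j^{(m)})$ tends to zero at a geometric rate. Granting this, the $\Phi_m$ are normalized at infinity and uniformly bounded on compact subsets, hence form a normal family; a locally uniform limit $\Phi_\infty$ is conformal and injective by Hurwitz's theorem together with the normalization, and $\Delta_m \to 0$ forces every complementary component of $\Phi_\infty(X)$ to be a round disk. Setting $h = \Phi_\infty$ proves existence.

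For uniqueness, suppose $h_1, h_2$ are two such maps, with images circle domains $\Omega_1, \Omega_2$. Then $F := h_2 \circ h_1^{-1} : \Omega_1 \to \Omega_2$ is conformal and normalized at infinity, and by the boundary regularity of conformal maps onto circle domains it extends analytically to each bounding circle of $\Omega_1$, carrying it to a bounding circle of $\Omega_2$. I would then reflect: across each boundary circle $C$ of $\Omega_1$ the Schwarz reflection principle extends $F$ into the disk bounded by $C$ via the inversion in the corresponding circle of $\Omega_2$, since $F$ maps circle to circle. Iterating the reflections in all $n$ circles generates a Schottky-type reflection group and extends $F$ to a holomorphic map on $\RiemannSphere \setminus \Lambda_1$, where $\Lambda_1$ is the limit set. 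Because inversions in disjoint circles are strict contractions, the nested reflected disks shrink geometrically, so $\Lambda_1$ is a totally disconnected compact set of zero area, $F$ extends to a homeomorphism of $\RiemannSphere$, and this extension has finite Dirichlet energy since its image has finite spherical area. Hence $F$ is a homeomorphism of the sphere, conformal off a set of zero area, and therefore a Möbius transformation; being normalized at infinity, it must be the identity. Consequently $\Omega_1 = \Omega_2$ and $h_1 = h_2$.

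The main obstacle is the convergence of the osculation algorithm in the existence step: proving a geometric decay $\Delta_{m+1} \leq \theta \Delta_m$ with $\theta < 1$ requires a careful quantitative estimate showing that rounding the worst component cannot spoil the others by more than a fixed fraction of the gain, which rests on the Koebe distortion bounds together with the separation of the components. In the uniqueness step, the delicate point is the removability of the limit set $\Lambda_1$; this is where I invoke that a homeomorphism of the sphere which is conformal off a set of zero area is $1$-quasiconformal, and hence conformal, everywhere. Both of these facts are classical for finite connectivity, and I would cite the standard treatments rather than reproduce the estimates.
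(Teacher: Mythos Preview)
The paper does not prove Koebe's theorem at all. It is stated without proof as a classical result attributed to Koebe, and is used only as a tool: to parametrize non-degenerate $n$-connected domains by circle domains (the set $\mathcal{M}$), and to show that the Ahlfors section $A$ is injective and has closed image. So there is no ``paper's own proof'' to compare against.

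That said, your outline is a faithful sketch of the classical argument. The existence part via Koebe's iterative osculation procedure is the standard approach, and you correctly isolate the real work: establishing a contraction estimate $\Delta_{m+1} \leq \theta \Delta_m$ so that the normalized maps $\Phi_m$ converge. The uniqueness part via repeated Schwarz reflection across the boundary circles, extension to the complement of the Schottky limit set, and removability of that set for conformal homeomorphisms is also the standard route in finite connectivity. One small point: the step ``$F$ extends to a homeomorphism of $\RiemannSphere$'' deserves a sentence, since you must check that the extension across $\Lambda_1$ is continuous (this follows because the nested reflected disks on both sides shrink to points, so the map on the tree of disks induces a homeomorphism of the limit sets). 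Otherwise the sketch is sound; in a paper like this one you would simply cite Koebe and a standard reference (e.g.\ Goluzin or Henrici) rather than reproduce the estimates, exactly as the authors do.
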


Accordingly, we let $\mathcal M$ denote the set of $(c,r) \in \ComplexPlane^n \times\Reals_+^n$ such that the closed disks $\overline{\UnitDisk}(c_j,r_j)$ are pairwise disjoint. It is easy to see that $\mathcal{M}$ is open and hence a manifold of dimension $3n$.

Next, let $\mathcal R$ denote the set of $(a,p) \in (\ComplexPlane\setminus \{ 0 \})^n \times \ComplexPlane^n $ such that the rational map
$$
R(z)=\sum_{j=1}^n \frac{a_j}{z-p_j}
$$
is $n$-good, meaning that $R^{-1}(\UnitDisk)$ is $n$-connected. It follows from Lemma \ref{nconnected} that $\mathcal{R}$ is open and hence a manifold of dimension $4n$ (see \cite[Lemma 2.7]{JEONG2}).

Finally, let $\mathcal R^+$ denote the subset of $\mathcal R$ where all the residues $a_j$ are real and positive. Clearly, $\mathcal R^+$ is a closed submanifold of $\mathcal R$ of dimension $3n$.

We can now define a map $ A : \mathcal M \to \mathcal R$, which we call \textit{the Ahlfors section}, as follows. Given $(c,r) \in \mathcal M$, define $$X:= \RiemannSphere \setminus \bigcup_{j=1}^n \overline{\UnitDisk}(c_j,r_j),$$ which is a non-degenerate $n$-connected domain containing $\infty$. Then let $f$ be the Ahlfors function on $X$, and let $f = R \circ g$ be the normalized factorization given by Corollary \ref{normalizedBellrep}. Since $R$ is an $n$-good rational map, it can be written as
$$
R(z)=\sum_{j=1}^n \frac{a_j}{z-p_j}
$$
for some $p_1,...,p_n \in \ComplexPlane$ and some $a_1,...,a_n \in \ComplexPlane \setminus \{ 0 \}$, unique up to permutation. An ordering of the terms in the above sum can be specified by requiring that for each $j$, the continuous extension of the biholomorphism $g : X \to R^{-1}(\UnitDisk)$ to $\partial X$ maps the circle of center $c_j$ and radius $r_j$ onto the boundary component of $\partial R^{-1}(\UnitDisk)$ surrounding the pole $p_j$. We then set $A(c,r):=(a,p)$.

Let us assume the following results and proceed with the proof of Theorem \ref{nonnecessity}.

\begin{lemma} \label{connectedness}
The manifold $\mathcal{R}^+$ is connected.
\end{lemma}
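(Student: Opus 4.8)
The plan is to prove the stronger statement that $\mathcal{R}^+$ is \emph{path}-connected, by joining every point $(a,p)\in\mathcal{R}^+$ to one fixed model configuration $\mathbf{s}_\star:=(\varepsilon\mathbf{1},(1,2,\dots,n))$ by a path that stays in $\mathcal{R}^+$; here $\mathbf{1}=(1,\dots,1)$ and $\varepsilon>0$ is a small constant fixed once and for all. Two elementary mechanisms do all the work: shrinking all the residues by a common positive factor, and performing that shrinking by a \emph{uniformly} small factor while the poles are dragged to the real axis.

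First I would record a scaling observation. If $R(z)=\sum_{j=1}^{n}a_j/(z-p_j)$ is $n$-good with all $a_j>0$, then $tR$ is $n$-good for every $t\in(0,1]$: the maps $R$ and $tR$ have the same finite critical points (multiplying by $t$ does not move the zeros of the derivative), and near $\infty$ both behave like $t\bigl(\sum_j a_j\bigr)/z$ with $\sum_j a_j>0$, so $\infty$ is a critical point of neither; hence the critical values of $tR$ are $t$ times those of $R$ and thus all lie in $\UnitDisk$, so $tR$ is $n$-good by Lemma~\ref{nconnected}. Consequently $t\mapsto(ta,p)$, $t\in(0,1]$, is a path in $\mathcal{R}^+$. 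The same computation applied to the model poles shows that $(t\mathbf{1},(1,2,\dots,n))$ is $n$-good exactly when $tM_\star<1$, where $Z_\star$ is the fixed finite set of critical points of $\sum_{j=1}^{n}1/(z-j)$ and $M_\star:=\max_{z\in Z_\star}\bigl|\sum_{j=1}^{n}1/(z-j)\bigr|$; I then fix $\varepsilon>0$ small enough that $\varepsilon M_\star<1$.

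Next I would prove a uniform-smallness lemma: for any continuous path $s\mapsto(v(s),p(s))$, $s\in[0,1]$, in $\mathbb{R}_+^{n}\times\mathcal{C}_n$, where $\mathcal{C}_n$ is the open set of $n$-tuples of pairwise distinct complex numbers, there is $\sigma_0\in(0,1]$ such that $s\mapsto(\sigma_0 v(s),p(s))$ lies in $\mathcal{R}^+$. Indeed, writing $R^v_s(z)=\sum_j v_j(s)/(z-p_j(s))$, the $2n-2$ finite critical points of $R^v_s$ are the zeros of $N_s(z)=\sum_j v_j(s)\prod_{i\neq j}(z-p_i(s))^2$; since the image of the path is compact, the coefficients of $N_s$ are bounded while its leading coefficient $\sum_j v_j(s)$ is bounded away from $0$, so these zeros stay in a fixed compact set, and a short compactness argument (if a critical point approached a pole $p_{j_0}(s)$, then $N_{s}(p_{j_0}(s))$ would vanish in the limit, which it does not) shows they stay at distance at least some $\delta_0>0$ from every pole. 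Hence $M:=\sup_s\max\{|R^v_s(z)|:N_s(z)=0\}<\infty$, and this bound does not depend on the overall scale of the residues, so any $\sigma_0\in(0,1]$ with $\sigma_0 M<1$ works, again by Lemma~\ref{nconnected}. To finish, given $(a,p)\in\mathcal{R}^+$ I would pick a path $(v(s),p(s))$ in $\mathbb{R}_+^{n}\times\mathcal{C}_n$ from $(a,p)$ to $(\mathbf{1},(1,2,\dots,n))$ — possible because $\mathbb{R}_+^n$ is convex and $\mathcal{C}_n$, being the complement in $\ComplexPlane^n$ of finitely many real-codimension-two subspaces, is path-connected — take $\sigma_0$ for this path, and concatenate the three stages: (i) $t\mapsto(ta,p)$ from $t=1$ to $t=\sigma_0$; (ii) $s\mapsto(\sigma_0 v(s),p(s))$, ending at $(\sigma_0\mathbf{1},(1,2,\dots,n))$; (iii) $t\mapsto(t\mathbf{1},(1,2,\dots,n))$ from $t=\sigma_0$ to $t=\varepsilon$. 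Stage (i) lies in $\mathcal{R}^+$ by the scaling observation, stage (ii) by the uniform-smallness lemma, and stage (iii) because $\sigma_0 M_\star\le\sigma_0 M<1$ (note $M\ge M_\star$, the latter being the value at $s=1$ of the supremum defining $M$) and $\varepsilon M_\star<1$, so $tM_\star<1$ throughout that interval. Residues stay real and positive all along, so this is a path in $\mathcal{R}^+$ from $(a,p)$ to $\mathbf{s}_\star$; hence $\mathcal{R}^+$ is path-connected, a fortiori connected.

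The delicate point is stage (ii): dragging the poles toward the real axis can force some of them close together, which in general destroys $n$-goodness. The way around this is precisely the observation that rescaling all residues by a common factor leaves the critical \emph{points} of $\sum_j a_j/(z-p_j)$ untouched while scaling the critical \emph{values} proportionally; combined with the uniform bound $M$ on $|R^v_s|$ at those critical points — available because they stay in a compact region at positive distance from the poles along the compact path — this lets one small choice of $\sigma_0$ keep the entire second stage inside $\mathcal{R}^+$. Everything else is soft.
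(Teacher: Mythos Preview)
Your proof is correct and follows the same three-stage shrink--move--rescale architecture as the paper's: scale down the residues, drag the poles along a path in configuration space while the residues are uniformly small, then scale back up. The only real difference is how the middle stage is justified: the paper proves the slightly cleaner Lemma~\ref{epsilon} (for any compact set $K$ of pole configurations there is an $\varepsilon>0$ such that \emph{any} residues with $0<|a_j|\le\varepsilon$ give an $n$-good map) via Hausdorff convergence of $[a/p]^{-1}(\RiemannSphere\setminus\UnitDisk)$ to the pole set, whereas you argue more concretely by locating the critical points of $R^v_s$ in a compact region at positive distance from the poles and bounding the critical values there --- both arguments arrive at the same conclusion that a single small scaling factor keeps the whole middle stage $n$-good.
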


\begin{theorem} \label{continuity}
The map $A : \mathcal M \to \mathcal R$ is continuous, injective, and has closed image.
\end{theorem}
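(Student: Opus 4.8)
The plan is to prove injectivity, continuity, and closedness of the image separately, with the latter two both resting on a \emph{stability lemma} for Ahlfors functions under domain convergence. Injectivity is immediate from Koebe's uniqueness statement: if $A(c,r)=A(\tilde c,\tilde r)=(a,p)$, with associated circular domains $X,\tilde X$, Ahlfors functions $f,\tilde f$, and normalized factorizations $f=R\circ g$ and $\tilde f=R\circ\tilde g$ (both with the same $n$-good map $R(z)=\sum_j a_j/(z-p_j)$), then $\phi:=\tilde g^{-1}\circ g\colon X\to\tilde X$ is a biholomorphism which is meromorphic on $X$ with a single simple pole at $\infty$, is normalized at infinity (a composition of maps normalized at infinity is again such), and whose image $\tilde X$ has complement a union of $n$ disjoint round disks. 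Since the identity map of $X$ is another such map, Koebe's uniqueness forces $\phi=\id$, so $\tilde X=X$ and, unwinding the labelling convention on boundary components, $(\tilde c,\tilde r)=(c,r)$.

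The central tool I would isolate is the following stability lemma: if $X_k$ are domains containing $\infty$, bounded by finitely many analytic Jordan curves, such that every compact subset of a fixed non-degenerate $n$-connected domain $X$ lies in $X_k$ for large $k$ and every neighbourhood of $\overline X$ contains $\overline{X_k}$ for large $k$, then the Ahlfors functions $f_k$ of $X_k$ converge locally uniformly on $X$ to the Ahlfors function $f$ of $X$, and $f_k'(\infty)\to f'(\infty)$. The proof repeats the argument inside the proof of Theorem~\ref{generalizedexample}: $f$ extends past $\partial X$ by Schwarz reflection, so for $r>1$ the set $U_r:=f^{-1}(r\UnitDisk)$ contains $\overline X$, hence $\overline{X_k}$ for $k$ large, making $(1/r)f$ an admissible competitor on $X_k$; this gives $f'(\infty)\le r\,f_k'(\infty)$ and, letting $r\downarrow 1$, $f'(\infty)\le\liminf f_k'(\infty)$. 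Conversely, any locally uniform subsequential limit $\tilde f$ of the normal family $(f_k)$ maps $X$ into $\overline{\UnitDisk}$ with $\tilde f(\infty)=0$, so by the maximum principle $\tilde f\equiv 0$ or $\tilde f\colon X\to\UnitDisk$, and in either case $\lim f_k'(\infty)=\tilde f'(\infty)\le f'(\infty)$. Hence $f_k'(\infty)\to f'(\infty)$, and uniqueness of the Ahlfors function upgrades this to $f_k\to f$ locally uniformly.

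For continuity, let $(c^{(k)},r^{(k)})\to(c,r)$, write $X_k,X$ for the circular domains and $f_k=R_k\circ g_k$, $f=R\circ g$ for the normalized factorizations, with $R_k(z)=\sum_j a_j^{(k)}/(z-p_j^{(k)})$. The stability lemma applies, so $f_k\to f$ locally uniformly and $f_k'(\infty)\to f'(\infty)>0$, and it suffices to show that every subsequence of $\bigl((a^{(k)},p^{(k)})\bigr)$ has a further subsequence converging to $(a,p)$; so assume $a_j^{(k)}\to\alpha_j$ and $p_j^{(k)}\to\pi_j$ in $\RiemannSphere$. The key step is non-degeneracy: since $R_k^{-1}(\UnitDisk)$ is conformally equivalent to $X_k$ via $g_k$, its conformal moduli — harmonic measures of its boundary components seen from $\infty$, and extremal lengths of curve families separating them — converge to those of $X$, which are non-degenerate; hence no pole escapes to $\infty$, no two poles collide, and no complementary component collapses, so $\alpha_j\in\ComplexPlane\setminus\{0\}$, the $\pi_j$ are distinct and finite, $\hat R:=\sum_j\alpha_j/(z-\pi_j)$ is $n$-good, and $R_k\to\hat R$ locally uniformly off its poles. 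As $n\ge 3$, each $g_k$ omits the three eventually uniformly separated points $p_1^{(k)},p_2^{(k)},p_3^{(k)}$, so $(g_k)$ is normal on $X$ by Montel's theorem; along a further subsequence $g_k\to\hat g$ locally uniformly, $\hat g$ is non-constant (else $f=\hat R\circ\hat g$ would be constant) hence injective by Hurwitz, and comparing degrees in $f=\hat R\circ\hat g$ shows $\hat g$ is a biholomorphism of $X$ onto $\hat R^{-1}(\UnitDisk)$ normalized at infinity. By the uniqueness in Corollary~\ref{normalizedBellrep}, $\hat R=R$ and $\hat g=g$, and the labelling conventions match in the limit, so $(\alpha,\pi)=(a,p)$.

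For the closed image I would use the stability lemma once more, recalling that the image of $A$ is precisely the set of rational Ahlfors functions of degree $n$. If $(a^{(k)},p^{(k)})=A(c^{(k)},r^{(k)})\to(a,p)$ with $(a,p)\in\mathcal R$, set $R(z)=\sum_j a_j/(z-p_j)$, which is $n$-good, and $R_k(z)=\sum_j a_j^{(k)}/(z-p_j^{(k)})$, which are rational Ahlfors functions; a maximum-modulus argument on small disks around the poles of $R$ shows $R_k^{-1}(\UnitDisk)\to R^{-1}(\UnitDisk)$ in the sense required by the stability lemma, so the Ahlfors function of $R^{-1}(\UnitDisk)$ is the locally uniform limit of the Ahlfors functions of $R_k^{-1}(\UnitDisk)$, which are the $R_k$ themselves, and since $R_k\to R$ it equals $R$; thus $R$ is a rational Ahlfors function and $(a,p)\in A(\mathcal M)$. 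The step I expect to be the genuine obstacle is the non-degeneracy inside the continuity argument — excluding colliding poles, escaping poles, and vanishing residues in a limit of rational Ahlfors functions of fixed degree — which I would handle via continuity and non-degeneracy of the conformal invariants of $R_k^{-1}(\UnitDisk)\cong X_k$; everything else reduces to the stability lemma together with the uniqueness statements (Koebe and Corollary~\ref{normalizedBellrep}) already in place.
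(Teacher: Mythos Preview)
Your treatment of injectivity and of the closed image is essentially the same as the paper's: both use Koebe uniqueness for the former, and the Schwarz-reflection/competition argument (what you package as the stability lemma) for the latter. That part is fine.

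For continuity the paper takes a different and cleaner route than yours. Instead of first controlling the poles and residues of $R_k$ via conformal moduli of $R_k^{-1}(\UnitDisk)$ and only then getting normality of $(g_k)$ by Montel, the paper invokes a Carath\'eodory-kernel compactness theorem for injective meromorphic maps normalized at infinity (its Theorem~\ref{compacite}, which rests on Gronwall's area theorem). This gives, with no hypothesis on the $R_k$, a subsequence along which $g_k\to h$ locally uniformly, $h$ injective and normalized, and $g_k(X_k)\to h(X)$ in the kernel sense. From $R_k^{-1}(\UnitDisk)=g_k(X_k)\to h(X)$ one reads off immediately that the poles and residues of $R_k$ stay bounded, and then a subsequential limit $Q$ of $R_k$ satisfies $Q=f\circ h^{-1}$, hence has degree $n$ on $h(X)$ and is $n$-good; uniqueness in Corollary~\ref{normalizedBellrep} gives $Q=R$, $h=g$.

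The gap in your version is precisely the step you flag: deducing ``no pole escapes, no poles collide, no residue vanishes'' from non-degeneracy of the conformal invariants of $R_k^{-1}(\UnitDisk)\cong X_k$. This is plausible, but as stated it is a bare assertion; you would need to show, for each type of degeneration of $(a^{(k)},p^{(k)})$, that a specific modulus or harmonic measure of $R_k^{-1}(\UnitDisk)$ tends to $0$ or $\infty$, and the various degenerations can interact (e.g.\ a residue blowing up while two other poles collide). The paper's approach sidesteps this entirely: compactness of the $g_k$ comes for free from the area theorem, and the non-degeneracy of the $R_k$ is then a consequence, not a hypothesis. Your Montel argument for $(g_k)$ also only works for $n\ge 3$, whereas Theorem~\ref{compacite} needs no such restriction.
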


\begin{proof}[Proof of Theorem \ref{nonnecessity}]
Suppose for a contradiction that the image of $A$ is contained in $\mathcal{R}^+$. By Theorem \ref{continuity}, the map $A : \mathcal{M} \to \mathcal{R}^+$ is a continuous injective map with closed image. Since $\mathcal{M}$ and $\mathcal{R}^+$ are both manifolds of dimension $3n$, the map $A$ is also open by Brouwer's invariance of domain. The image of $A$ is thus open and closed in $\mathcal{R}^+$. Since the latter is connected by Lemma \ref{connectedness}, $A$ surjects onto $\mathcal{R}^+$. This contradicts Theorem \ref{generalizedexample}, which implies that $\mathcal{R}^+ \setminus A(\mathcal M)$ is nonempty.
\end{proof}

We explain Lemma \ref{connectedness} and Theorem \ref{continuity} in the next two subsections.

\subsection{Connectedness}

Let
$$\mathcal{F}_n\ComplexPlane := \{\, (p_1,...,p_n) \in \ComplexPlane^n \mid p_i \neq p_j\text{ for }i\neq j \,\}$$
denote the configuration space of ordered $n$-tuples of distinct points in the plane.

For $a=(a_1,...,a_n) \in (\ComplexPlane\setminus \{ 0 \})^n$ and $p = (p_1,...,p_n) \in \mathcal{F}_n\ComplexPlane$, we introduce the notation
$$
[a/p](z) := \sum_{j=1}^n \frac{a_j}{z-p_j}.
$$

\begin{lemma} \label{epsilon}
Let $K \subset \mathcal F_n \ComplexPlane$ be a compact set. Then there exists an $\varepsilon>0$ such that if $p \in K$ and $a=(a_1,...,a_n)$ satisfies $0<|a_j|\leq\varepsilon$ for each $j\in\{1,...,n\}$, then $[a/p]$ is $n$-good.
\end{lemma}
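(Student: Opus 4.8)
The plan is to show that when all residues are small in modulus, the critical values of $[a/p]$ are automatically inside $\UnitDisk$, so condition (5) of Lemma \ref{nconnected} applies and $[a/p]$ is $n$-good. The key point is that $K$ compact forces the mutual distances $|p_i-p_j|$ to be bounded below, and this lower bound translates into an upper bound on the critical values once the residues are uniformly small.

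More precisely, fix $p \in K$ and write $R=[a/p]$. Near a pole $p_i$, the function $R$ is large, so its critical points cluster near the poles when the $a_j$ are small; we must quantify this. Set $\delta := \tfrac13 \min_{i \neq j} |p_i - p_j|$, where the minimum over the compact set $K$ gives a uniform $\delta_0 > 0$ valid for all $p \in K$. First I would show that for $\varepsilon$ small enough, every critical point $w$ of $R$ lies in one of the balls $\UnitDisk(p_i, \delta_0)$. Indeed, on the complement of $\bigcup_i \UnitDisk(p_i,\delta_0)$ one has $|R'(z)| \le \sum_j |a_j|/\delta_0^2 \le n\varepsilon/\delta_0^2$, but $R'$ cannot vanish there once we also know $R'$ is, say, comparable to $-a_i/(z-p_i)^2$ near $p_i$ in modulus — more carefully, the argument principle shows $R'$ has exactly $2n-2$ zeros (counted with multiplicity) in $\ComplexPlane$ and, for $\varepsilon$ small, a suitable count shows they all concentrate inside $\bigcup_i \UnitDisk(p_i,\delta_0)$ (this is a Rouché/Hurwitz-type argument, using that as $\varepsilon \to 0$ along rays the critical points converge to the poles). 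Then, for a critical point $w \in \UnitDisk(p_i,\delta_0)$, I would estimate $|R(w)|$ directly: the dominant term $a_i/(w-p_i)$ has modulus controlled because the critical point equation $\sum_j a_j/(w-p_j)^2 = 0$ forces $|a_i|/|w-p_i|^2 \le \sum_{j \ne i} |a_j|/|w-p_j|^2 \le (n-1)\varepsilon/\delta_0^2$, hence $|w-p_i| \ge (|a_i|\delta_0^2/((n-1)\varepsilon))^{1/2}$, which gives $|a_i/(w-p_i)| \le ((n-1)\varepsilon|a_i|)^{1/2}/\delta_0 \le \sqrt{n-1}\,\varepsilon/\delta_0$; the remaining terms $\sum_{j\ne i} a_j/(w-p_j)$ are bounded by $(n-1)\varepsilon/(2\delta_0)$ since $|w-p_j| \ge 2\delta_0$ for $j \ne i$. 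Summing, $|R(w)| \le C(n)\,\varepsilon/\delta_0$, which is $< 1$ once $\varepsilon < \delta_0/C(n)$.

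Taking $\varepsilon := \delta_0 / C(n)$ (with $\delta_0$ the uniform minimal-separation constant over $K$ and $C(n)$ the explicit constant above) then works simultaneously for all $p \in K$: all critical values of $[a/p]$ lie in $\UnitDisk$, so by the equivalence $(5)\Rightarrow(3)$ of Lemma \ref{nconnected} the map $[a/p]$ is $n$-good.

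The main obstacle I anticipate is the first step — pinning down that \emph{all} $2n-2$ critical points really do lie in the small balls around the poles for $\varepsilon$ small, uniformly in $p \in K$. One has to rule out critical points escaping to infinity or sitting in the "bulk" region between the poles. The clean way is: on $\RiemannSphere \setminus \bigcup_i \UnitDisk(p_i,\delta_0)$ one has $z^2 R'(z) = -\sum_j a_j (z/(z-p_j))^2 \to 0$ uniformly (in the spherical sense, including near $\infty$) as $\varepsilon \to 0$, while the polynomial $z^2 R'(z)$ (after clearing denominators, it is $-\sum_j a_j \prod_{k \ne j}(z-p_k)^2$, of degree $2n-2$ with a nonzero leading coefficient $-\sum a_j$) cannot be uniformly small on a fixed region unless it has a zero there in the limit — which it does not, since in the limit its zero set is forced toward the poles. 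Making this precise via Hurwitz's theorem applied along sequences $a^{(k)} \to 0$, combined with a compactness argument over $p \in K$, settles it; the rest is the routine estimate above.
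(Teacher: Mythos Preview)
Your approach via condition (5) of Lemma~\ref{nconnected} can be made to work, but Step~1 as you state it is false, and the Hurwitz patch does not rescue it. The critical points of $[a/p]$ depend only on the \emph{ratios} $a_j/a_1$: replacing $a$ by $\lambda a$ turns $[a/p]$ into $\lambda\,[a/p]$, whose critical points coincide with those of $[a/p]$ while the critical values scale by $\lambda$. So letting $\eps\to 0$ along the ray $a=t\,a^0$ leaves the critical points fixed; they do not migrate toward the poles. Concretely, for $n=2$ with $a_1=a_2=\eps$ the two critical points sit at $(p_1+p_2)/2 \pm i(p_1-p_2)/2$, independent of $\eps$, and lie outside both disks $\UnitDisk(p_j,\delta_0)$.

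Fortunately Step~1 is unnecessary. If a critical point $w\in\ComplexPlane$ lies outside every $\UnitDisk(p_j,\delta_0)$, then directly $|R(w)|\le \sum_j |a_j|/|w-p_j|\le n\eps/\delta_0$; and if $\infty$ is critical (which happens precisely when $\sum_j a_j=0$), then $R(\infty)=0$. Combined with your correct Step-2 estimate for critical points that \emph{do} land in some $\UnitDisk(p_i,\delta_0)$, this yields $|R(w)|\le C(n)\,\eps/\delta_0$ at every critical point, and the proof goes through.

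The paper takes a different and shorter route, using condition (1) of Lemma~\ref{nconnected} rather than (5). By a sequential compactness reduction it suffices to show that if $p^k\to p$ in $\mathcal F_n\ComplexPlane$ and $a^k\to 0$, then $[a^k/p^k]$ is eventually $n$-good. Since $[a^k/p^k]\to 0$ locally uniformly on $\RiemannSphere\setminus\{p_1,\dots,p_n\}$, the sets $[a^k/p^k]^{-1}(\RiemannSphere\setminus\UnitDisk)$ Hausdorff-converge to $\{p_1,\dots,p_n\}$ and hence eventually have at least $n$ components. This sidesteps the critical-point analysis altogether; your route, once repaired, has the advantage of giving an explicit $\eps$ in terms of $n$ and the minimal pole separation over $K$.
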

\begin{proof}
It suffices to prove that whenever we have a sequence $(p^k)_{k=1}^\infty \subset F_n \ComplexPlane $ converging to some $p \in F_n \ComplexPlane$ and a sequence $(a^k)_{k=1}^\infty \subset (\ComplexPlane\setminus \{ 0 \})^n$ converging to $0$, then $[a^k / p^k]$ is $n$-good for all sufficiently large $k$. As $a^k \to 0$ and $p^k \to p$, the rational maps $[a^k / p^k]$ converge locally uniformly to $0$ on $\RiemannSphere \setminus \{ p_1,...,p_n \}$. This implies that the compact sets $[a^k / p^k]^{-1}(\RiemannSphere \setminus \UnitDisk)$ converge to $\{ p_1,...,p_n \}$ in the sense of Hausdorff, and it follows that $[a^k / p^k]$ is $n$-good when $k$ is large enough.
\end{proof}

We can now prove that $\mathcal R^+$ is path-connected.

\begin{proof}[Proof of Lemma \ref{connectedness}]
Let $(a^0,p^0),(a^1,p^1) \in \mathcal{R}^+$. To construct a path from $(a^0,p^0)$ to $(a^1,p^1)$, we first shrink the vector of residues $a^0$ while keeping the poles fixed. Once the residues are all small enough, we move the poles from $p^0$ to $p^1$ while adjusting the residues individually in such a way that when the poles have arrived at destination, we can just rescale the residues by a common factor to end up with $a^1$. The precise details are as follows.

It is well-known and easy to prove by induction that $\mathcal{F}_n\ComplexPlane$ is path-connected. Accordingly, let $\alpha : [0,1] \to \mathcal{F}_n\ComplexPlane$ be a path from $p^0$ to $p^1$. For each $j\in \{1,...,n\}$, we define
$$
p_j^t := \begin{cases}
p_j^0  & t \in [0,1/3] \\
\alpha(3t-1)_j & t \in (1/3,2/3) \\
p_j^1  & t\in [2/3,1].
\end{cases}
$$

Let $K:=\alpha([0,1])$ denote the trace of the path $\alpha$, and let $\eps$ be as in the previous lemma. We may assume that $\eps \leq a^i_j$ for each $i \in \{ 0, 1\}$ and each $j \in \{1,...,n\}$.

For $t \in [0,1/3]$, let $\mu_t$ be the constant which decreases linearly from $1$ to $\varepsilon / \|a^0\|_\infty$, and for $t \in [2/3,1]$ let $\nu_t$ be the constant which increases linearly from $\varepsilon / \|a^1\|_\infty$ to $1$. For each $j\in \{1,...,n\}$, we define
$$
a_j^t :=
\begin{cases}
\mu_t a_j^0  & t \in [0,1/3] \\
(2-3t)\mu_{1/3} a_j^0 + (3t-1) \nu_{2/3} a_j^1 & t \in (1/3,2/3) \\
\nu_t a_j^1 & t\in [2/3,1].
\end{cases}
$$

The path $t \mapsto (a^t,p^t)$ is clearly continuous and is such that for all $t \in [0,1]$, the poles $p_1^t,...,p_n^t$ are distinct, and the residues $a_1^t,...,a_n^t$ are positive. It remains to check that $[a^t / p^t]$ is $n$-good for each $t \in [0,1]$.

For $t\in [0,1/3]$, we have
$$
[a^t/p^t] = [\mu_t a^0/p^0] = \mu_t [a^0/p^0],
$$
with $\mu_t\in (0,1]$. Since $[a^0/p^0]$ is $n$-good by hypothesis, all of its critical values have modulus strictly less than $1$. The critical values of $\mu_t [a^0/p^0]$ have modulus strictly less than $1$ as well, since they are equal to the critical values of $[a^0/p^0]$ rescaled by the factor $\mu_t$. Therefore, $[a^t/p^t]$ is $n$-good. The same argument shows that $[a^t/p^t]=\nu_t [a^1/p^1]$ is $n$-good for every $t \in [2/3,1]$.

Finally, $[a^t/p^t]$ is $n$-good for every $t \in (1/3,2/3)$ in virtue of Lemma \ref{epsilon}, as we have $0< a_j^t \leq \eps$ for each $j$.
\end{proof}

\begin{remark}
Similar arguments show that $\mathcal M$ and $\mathcal R$ are path-connected as well.
\end{remark}

\subsection{The Ahlfors section}

It remains to explain why the Ahlfors section $A$ is continuous, injective, and has closed image.

\begin{proof}[Proof that $A$ is injective]
Let $(c^0,r^0)$ and $(c^1,r^1)$ in $\mathcal M$ be such that $A(c^0,r^0)=A(c^1,r^1)$. Let $X_0$ and $X_1$ be the circular domains corresponding to $(c^0,r^0)$ and $(c^1,r^1)$ respectively. For $k=0,1$, let $f_k$ be the Ahlfors function on $X_k$, and let $f_k = R_k \circ g_k$ be the normalized factorization of $f_k$. Since $A(c^0,r^0)=A(c^1,r^1)$, the $n$-good rational maps $R_0$ and $R_1$ have the same ordered sets of residues and poles, and are thus equal. It follows that $g_1^{-1} \circ g_0$ is a biholomorphism from $X_0$ to $X_1$. Moreover, this biholomorphism is normalized at infinity, and sends the $j$-th circle bounding $X_0$ to the $j$-th circle bounding $X_1$. By the uniqueness part of Koebe's theorem, $g_1^{-1} \circ g_0$ is the identify map. Therefore, $X_0=X_1$ and $(c^0,r^0)=(c^1,r^1)$.
\end{proof}

\begin{proof}[Proof that the image of $A$ is closed]
We first prove that the set of rational Ahlfors functions is closed in the set of $n$-good rational maps. Suppose that $R_k \to R$, where $R_k$ is a rational Ahlfors function and $R$ is an $n$-good rational map. We have to prove that $R$ is the Ahlfors function on $R^{-1}(\UnitDisk)$. Let $f$ be the Ahlfors function on $R^{-1}(\UnitDisk)$. By Ahlfors' theorem and the Schwarz reflection principle, $f$ extends to a bounded holomorphic function on some neighborhood $U$ of $\overline{R^{-1}(\UnitDisk)}$. Let $r>1$, and let $U_r = f^{-1}(r\UnitDisk)$. This is a neighborhood of $\overline{R^{-1}(\UnitDisk)}$ on which $|f| < r$. If $k$ is large enough, then $R_k^{-1}(\UnitDisk)$ is contained in $U_r$, which implies that
$$
f'(\infty) \leq r R_k'(\infty).
$$
Letting $k \to \infty$, and then $r \to 1$, we obtain $f'(\infty) \leq R'(\infty)$. By uniqueness of the Ahlfors function, $R=f$. We can now deduce the result. Suppose that $(a^k,p^k) \to (a,p)$, where $(a^k,p^k) \in A(\mathcal M)$ and $(a,p) \in \mathcal R$. Then the corresponding rational maps $[a^k/p^k]$ converge locally uniformly to $[a/p]$. By the above proof, $[a/p]$ is a rational Ahlfors function. By Koebe's theorem, there exists a biholomorphism $g$ normalized at infinity from $[a/p]^{-1}(\UnitDisk)$ to the complement $X$ of $n$ round disks in the plane. The biholomorphism $g$ extends continuously to the boundary, and maps the boundary component of $[a/p]^{-1}(\UnitDisk)$ enclosing the pole $p_j$ to some circle with center $c_j$ and radius $r_j$. By the transformation law, the Ahlfors function on $X$ is equal to $[a/p] \circ g^{-1}$. It follows that $A(c,r)=(a,p)$, and thus the image of $A$ is closed.
\end{proof}

The proof of continuity requires the following generalization of Carath\'eodory's kernel convergence theorem.

\begin{theorem}
\label{compacite}
Let $(X_k)_{k=1}^\infty$ be a sequence of domains containing $\infty$ and for each $k$, let $g_k : X_k \to  \RiemannSphere$ be injective meromorphic functions normalized at infinity. Suppose that $X_k$ converges to some domain $X$ containing $\infty$ in the sense of Carath\'eodory. Then there exists a subsequence $(g_{j_k})_{k=1}^{\infty}$ which converges locally uniformly on $X$ to an injective meromorphic function $g:X \to \RiemannSphere$ normalized at infinity. Moreover, the corresponding subsequence of images $(g_{j_k}(X_{j_k}))_{k=1}^{\infty}$ converges to $g(X)$ in the sense of Carath\'eodory and $g_{j_k}^{-1} \to g^{-1}$ locally uniformly on $g(X)$.
\end{theorem}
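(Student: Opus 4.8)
The plan is to imitate the classical proof of Carath\'eodory's kernel theorem, the new ingredient being a priori estimates obtained from the normalization at infinity together with the univalence of the $g_k$. First I would obtain uniform control near $\infty$: since $X\ni\infty$, fix $M>0$ with $\{|z|\geq M\}\cup\{\infty\}\subset X$; this set is compact in $X$, hence eventually contained in $X_k$, so $g_k$ is univalent on $\{|z|>M\}$ for $k$ large. Gronwall's area theorem applied to $z\mapsto g_k(Mz)/M$ bounds the Laurent coefficients of $g_k(z)=z+\sum_{n\geq1}b_n^{(k)}z^{-n}$ by $|b_n^{(k)}|\leq M^{n+1}$, whence $|g_k(z)-z|\leq M$ on $\{|z|\geq 2M\}$. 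In particular the Jordan curve $g_k(\{|z|=2M\})$ lies in $\{|w|\leq 3M\}$, and since $g_k$ restricts to a homeomorphism of the closed topological disk $\{|z|\geq 2M\}\cup\{\infty\}$ onto a closed topological disk, it maps $\{|z|>2M\}\cup\{\infty\}$ onto the unbounded complementary component of that curve; that component contains $\{|w|>3M\}\cup\{\infty\}$, so $g_k(X_k\cap\{|z|\leq 2M\})\subseteq\{|w|\leq 3M\}$. Hence $\{g_k\}$ is locally uniformly bounded on $X$ (on the part of a compact $K\subset X$ in $\{|z|\leq 2M\}$ by the bound $3M$, on the rest by the near-identity estimate). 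By Montel's theorem and a diagonal argument over an exhaustion of $X$, a subsequence $g_{j_k}$ converges locally uniformly on $X$ to some $g$; convergence of Laurent coefficients at $\infty$ shows $g$ is normalized there, hence non-constant, and Hurwitz's theorem for injective maps forces $g$ to be injective. This proves the first assertion.

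Write $Y_k:=g_{j_k}(X_{j_k})$ and $Y:=g(X)$. To treat the inverses, let $C\subset Y$ be compact; then $g^{-1}(C)$ is compact in $X$, and I would pick a compact $L$ with $g^{-1}(C)\subset\operatorname{int}L\subset L\subset X$. For $k$ large, $g_{j_k}-w$ is $C^0$-close to $g-w$ on $\partial L$, uniformly in $w\in C$ (since $g(\partial L)$ and $C$ are disjoint compacta), so by the argument principle $g_{j_k}-w$ has exactly one zero in $\operatorname{int}L$; by injectivity of $g_{j_k}$ this is its only zero, hence $w\in Y_k$ and $g_{j_k}^{-1}(w)\in\operatorname{int}L$. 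Thus every compact of $Y$ is eventually contained in $Y_k$, and $\{g_{j_k}^{-1}\}$ is locally uniformly bounded on $Y$, hence normal; letting $k\to\infty$ in $g_{j_k}\circ g_{j_k}^{-1}=\operatorname{id}$, and using uniform convergence of $g_{j_k}$ on the fixed compact $L\supseteq g_{j_k}^{-1}(C)$, one sees that every subsequential limit of $g_{j_k}^{-1}$ is $g^{-1}$, so $g_{j_k}^{-1}\to g^{-1}$ locally uniformly on $Y$.

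It remains to prove $Y_k\to Y$ in the sense of Carath\'eodory. That $Y$ lies in the kernel of every subsequence of $(Y_{j_k})$ follows from the previous paragraph. For the reverse inclusion, extract from any subsequence of $(Y_{j_k})$ a further subsequence converging in the Carath\'eodory sense to a domain $Y^{*}$, which is non-degenerate because $Y_k\supseteq\{|w|>3M\}\cup\{\infty\}$ for $k$ large. Applying the already-proven first assertion to the maps $g_{j_k}^{-1}$ — injective, meromorphic and normalized at infinity on $Y_{j_k}\to Y^{*}$ — gives, along a further subsequence, $g_{j_k}^{-1}\to\hat h$ locally uniformly on $Y^{*}$ with $\hat h$ injective and normalized at infinity. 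On $Y\subseteq Y^{*}$ this limit agrees with $g^{-1}$ by the previous paragraph, so $\hat h$ extends $g^{-1}$ and $\hat h(Y^{*})\supseteq\hat h(Y)=X$; and since $\hat h(\infty)=\infty$ while, by the same argument-principle reasoning, every compact of $\hat h(Y^{*})$ is eventually contained in $g_{j_k}^{-1}(Y_{j_k})=X_{j_k}$, maximality of the Carath\'eodory kernel of $(X_{j_k})$ forces $\hat h(Y^{*})\subseteq X$, so $\hat h(Y^{*})=X$. Finally, for $w\in Y^{*}$ we have $g_{j_k}^{-1}(w)\to\hat h(w)\in X$ while $g_{j_k}\to g$ locally uniformly on $X$, so $w=g_{j_k}(g_{j_k}^{-1}(w))\to g(\hat h(w))$, whence $w\in g(X)=Y$. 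Therefore $Y^{*}=Y$, and a standard sub-subsequence argument yields $Y_{j_k}\to Y$.

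I expect the main obstacle to be the passage from control near infinity to genuine local uniform boundedness of $\{g_k\}$ on all of $X$ — in effect, making rigorous that the bounded part of $X_k$ is carried into a fixed bounded region — since this is where univalence must be combined with the global topology of $\RiemannSphere$, and $X_k$ need not be connected in any convenient way. A secondary delicate point is the maximality argument for the Carath\'eodory kernel of the image domains, which works precisely because $\hat h(Y^{*})$ contains $\infty$.
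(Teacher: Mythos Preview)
The paper does not actually prove this theorem: it cites Goluzin's book and notes only that the proof ``relies on the compactness of the set of injective meromorphic functions normalized at infinity on any domain containing $\infty$,'' which in turn follows from Gronwall's area theorem. Your proposal is a correct, self-contained implementation of precisely this strategy: you use Gronwall to control the Laurent coefficients and then the Jordan-curve argument to pass from near-$\infty$ control to local uniform boundedness on all of $X$, after which Montel and Hurwitz give the limit $g$; your treatment of the inverses and of Carath\'eodory convergence of the images via the argument principle and a sub-subsequence argument is also sound. Two minor remarks: the worry you flag as the ``main obstacle'' is in fact already dispatched by your own Jordan-curve argument, and in the last step you can avoid invoking a separate compactness result for Carath\'eodory convergence by observing that the first assertion only uses the kernel property (every compact subset of the limit domain is eventually contained in the $Y_{j_k}$), so you may work directly with the kernel $Y^*$ of an arbitrary subsequence rather than first extracting a Carath\'eodory-convergent sub-subsequence.
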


A proof of this theorem can be found in \cite[Theorem 1, p.228]{GOL}. It relies on the compactness of the set of injective meromorphic functions normalized at infinity on any domain containing $\infty$. The latter follows from Gronwall's area theorem, which says that if
$$
h(z) = z  + \sum_{j=1}^\infty b_j z^{-j}
$$
is injective on $\{ z \in \RiemannSphere : |z|>1 \}$, then $\sum_{j=1}^\infty j|b_j|^2 \leq 1$.

\begin{proof}[Proof that $A$ is continuous]
Suppose that $(c^k,r^k) \to (c,r)$. Let $X_k$ and $X$ be the corresponding circle domains, and let $f_k= R_k \circ g_k$ and $f=R \circ g$ be the Ahlfors functions with their normalized factorizations on $X_k$ and $X$ respectively. We first prove that $f_k \to f$ locally uniformly on $X$. Observe that
$$
f'(\infty) \leq \liminf_{k \to \infty} f_k'(\infty)
$$
since $f$ extends to a neighborhood of $\overline X$ and $\RiemannSphere \setminus X_k \to \RiemannSphere \setminus X$ in the sense of Hausdorff. By Montel's theorem, the sequence $(f_k)_{k=1}^\infty$ forms a normal family. If $\varphi$ is any limit of any subsequence of $(f_k)_{k=1}^\infty$, then $\varphi$ is a holomorphic map from $X$ to $\UnitDisk$, satisfying $\varphi(\infty)=0$ as well as $\varphi'(\infty)\geq f'(\infty)$. By uniqueness of the Ahlfors function, $\varphi = f$, and hence $f_k \to f$.

Let $S \subset \mathbb N$ be any subsequence. Then by Theorem \ref{compacite}, there exists a subsequence $S' \subset S$ such that $g_k$ converges locally uniformly on $X$ to an injective meromorphic function $h$ normalized at infinity, as $k \to \infty$ in $S'$.

Then $h(X)$ is a non-degenerate $n$-connected domain containing $\infty$. Moreover, by Theorem \ref{compacite}, $g_k(X_k) \to h(X)$ in the sense of Caratheodory and $g_k^{-1} \to h^{-1}$ locally uniformly on $h(X)$. The poles and residues of $R_k$ have to stay bounded since $R_k^{-1}(\UnitDisk)=g_k(X_k) \to h(X)$, and it follows that we may extract a further subsequence so that $R_k$ converges, outside of a finite set of points, to some rational function $Q$ of degree at most $n$. On the other hand, we have $R_k = f_k \circ g_k^{-1} \to f \circ h^{-1}$, so that $f \circ h^{-1}=Q$. Since $f : X \to \UnitDisk$ has degree $n$, the restriction $Q : h(X) \to \UnitDisk$ has degree $n$ as well. Therefore, the global degree of $Q$ is $n$ and $Q^{-1}(\UnitDisk)=h(X)$ is a non-degenerate $n$-connected domain. We thus have a factorization $f = Q \circ h$, where $Q$ is an $n$-good rational map and $h : X \to Q^{-1}(\UnitDisk)$ is a biholomorphism normalized at infinity. By Corollary \ref{normalizedBellrep}, we have $Q=R$ and $h=g$.

Since the subsequence $S$ was arbitrary, we have $R_k \to R$ and $g_k \to g$. Write $R_k=[a^k/p^k]$, where the poles are ordered in such a way that $g_k$ maps the circle with center $c_j^k$ and radius $r_j^k$ to the boundary component of $R_k^{-1}(\UnitDisk)$ surrounding $p_j^k$, and write $R=[a/p]$ similarly. As indicated earlier, the poles and residues of $R_k$ lie in a bounded set. If $(a^k,p^k) \to (a',p')$ along some subsequence, then we get that $[a^k/p^k] \to [a'/p']$ along this subsequence, and thus $[a'/p']=[a/p]$. This implies that $(a'_j,p'_j)=(a_{\sigma(j)},p_{\sigma(j)})$ for some permutation $\sigma$. However, for any simple closed curve $\alpha$ in $X$ surrounding $c_j$ and no other center, and $k$ large enough, the image $g_k(\alpha)$ is a simple closed curve surrounding $p_j^k$ and no other pole of $R_k$. The limit $p_{\sigma(j)}$ of $p_j^k$ is thus surrounded by the curve $g(\alpha)$. This implies that $\sigma$ is the identity permutation. Therefore, $(a^k,p^k) \to (a,p)$ and $A$ is continuous.
\end{proof}

\acknowledgments{The authors thank Joe Adams, Dmitry Khavinson, Thomas Ransford and the anonymous referee for helpful suggestions in order to improve the paper.}

\bibliographystyle{amsplain}

\end{document}